\documentclass[a4paper,10pt]{article}
\usepackage[utf8]{inputenc}
\usepackage[T1]{fontenc}
\usepackage[english]{babel} 

\usepackage{mathpazo}
\usepackage{mathptmx}
\usepackage{amsmath}
\usepackage{amssymb}
\usepackage{mathrsfs}
\usepackage{amsthm}
\usepackage{amsfonts}
\usepackage{fancyhdr}
\usepackage[all]{xy}

\numberwithin{equation}{section}
{\theoremstyle {definition} \newtheorem {defi} {Definition} [section] }
{\theoremstyle {plain}  \newtheorem {thm} [defi] {Theorem}}
{\theoremstyle {plain}  \newtheorem {cor} [defi]{Corollary}}
{\theoremstyle {plain} \newtheorem {prop} [defi]{Proposition}}
{\theoremstyle {plain} \newtheorem {nem}[defi] {Lemma}}
{\theoremstyle {remark} \newtheorem {rmq}[defi] {Remark}}

\def\E{{\Bbb{E}}}
\def\T{{\Bbb{T}}}
\def\P{{\Bbb{P}}}
\def\R{{\Bbb{R}}}

\def\N{{\Bbb{N}}}

\def\dt{{\partial_t}}

\def\ggeq{{ \gtrsim }}
\def\lleq{{ \lesssim }}

\def\ds{{\partial^2_{tt}}}

\def\ggeq{{\ \gtrsim\ }}
\def\lleq{{\ \lesssim\ }}
\def\p{{\mathfrak{P}_t^\alpha}}
\def\q{{\mathfrak{P}_t^{\alpha *}}}
\def\b{{\mathfrak{P}_t^k}}
\def\d{{\mathfrak{P}_t^{k *}}}

\title{Invariant measure and large time dynamics of the cubic Klein-Gordon equation in $3D$}
\author{Mouhamadou SY}
\date{\textit{Université de Cergy-Pontoise\\ Laboratoire AGM UMR 8088 CNRS \\ 2 av. Adolphe Chauvin,
95302 Cergy-Pontoise Cedex, France\\ mouhamadou.sy@u-cergy.fr}}

\pagestyle{fancy}
\lhead{M. SY}
\begin{document}

\maketitle

\begin{abstract}
In this paper we construct an invariant probability measure concentrated on $H^2(K)\times H^1(K)$ for a general cubic Klein-Gordon equation (including the case of the wave equation). Here $K$ represents both the $3$-dimensional torus or a bounded domain with smooth boundary in $\R^3.$ That allows to deduce some corollaries on the long time behaviour of the flow of the equation in a probabilistic sense. We also establish qualitative properties of the constructed measure. This work extends the Fluctuation-Dissipation-Limit (FDL) approach to PDEs having only one (coercive) conservation law.
\end{abstract}
\paragraph{Keywords:} Klein-Gordon equation, wave equation, invariant measure, fluctuation-dissipation, inviscid limit.
\paragraph{Classification:}28D05, 60H30, 35B40, 35L05, 35L71.
\tableofcontents

\section{Introduction}
The Klein-Gordon (KG) equation 
\begin{equation}\label{KG}
\ds u-\Delta u+m_0^2u+u^3=0, \ \ (t,x)\in\R_+\times K,
\end{equation}
is a model of evolution of a relativistic massive particle. Here $u$ is a real-valued function,  $m_0^2\in \R$ is the square of the mass of the particle and $K\subset\Bbb R^3$ is the physical space. The KG equation is a Hamiltonian PDE, with the Hamiltonian 
\begin{equation}\label{energy}
E(u,\dt u)=\frac{1}{2}\int \left(|\dt u|^2+|\nabla u|^2+m_0^2|u|^2\right)dx+\frac{1}{4}\int u^4dx.
\end{equation}
The natural phase space is then the Sobolev product space $H^1(K)\times L^2(K)$ containing the vectors $y=[u,\dt u]$.

Our purpose is to construct an invariant measure and to study some of its qualitative properties. The motivations of such a problem are discussed below as well as the difficulties of the question in the context of $(\ref{KG})$. Moreover, a panorama of applications coming from general ergodic theorems is presented in Section \ref{sect_ergo}. Here, we consider both the periodic and the bounded domain setting.\\
 Both on $\T^3$ or on a domain $D$ (with boundary conditions $u|_{\partial K}=0$), we denote by $(\lambda_j,e_j)_{j\in\N}$ the couples (eigenvalue, eigenfunction) of the Laplacian operator $-\Delta$. Remark that $\lambda_0=0$ only when the problem is posed on a torus and that, in both cases, $(\lambda_j)_j$ is a sequence of non-negative real numbers increasing to infinity like $j^{\frac{2}{3}}$ (Weyl asymptotics). We define the Sobolev space of order $m\in\R$ by
\begin{equation*}
H^m=\left\{u=\sum_{j=0}^\infty u_je_j:\ \ \|u\|_m^2 :=\sum_{j=0}^\infty(m_0^2+\lambda_j)^mu_j^2<\infty\right\},
\end{equation*}
where $m_0^2>-\lambda_0$.
The space $H^0$ is also denoted by $L^2,$ and $\|.\|_0$ by $\|.\|.$ The inner product on $H^m$ corresponding to the norm $\|.\|_m$ is denoted by $(,)_m$ and $(,)_0$ is simply written $(,)$. We have the following embedding inequality:
\begin{equation}\label{chap5_embed_ineq}
\|u\|_m^2\geq (m_0^2+\lambda_0)^{(m-s)}\|u\|_{s}^2\ \ \ \ for\ \ any \ \ m\geq s \ \ in\ \ \R.
\end{equation}
The product Sobolev space $H^m\times H^{n}$ is denoted by $\mathcal{H}^{m,n}$ and endowed with the norm defined, for any vector $[u,v]\in \mathcal{H}^{m,n},$ by
\begin{equation*}
\|[u,v]\|_{m,n}^2:=\|u\|_m^2+\|v\|_{n}^2,
\end{equation*}
and the corresponding inner product is denoted by
\begin{equation*}
([u_1,v_1],[u_2,v_2])_{m,n}:=(u_1,u_2)_m+(v_1,v_2)_n.
\end{equation*}
Set $\Delta_0:=\Delta-m_0^2$, then the equation and its Hamiltonian are rewritten as
\begin{align*}
\ds u-\Delta_0u+u^3=0,
\end{align*}
\begin{equation*}
E(u,\dt u)=\frac{1}{2}\|[u,\dt u]\|_{1,0}^2+\frac{1}{4}\int u^4.
\end{equation*}
Notice that
\begin{align*}
\|u\|_m=\|(-\Delta_0)^{m/2}u\|, \ \ \ (u,v)_m=((-\Delta_0)^{m/2}u,(-\Delta_0)^{m/2}v).
\end{align*}

\subsection{Invariant measures for PDEs: Motivations and approaches}
Solving the Cauchy problem for a PDE is equivalent to specifying a phase space $E$ and a (semi-) group of continuous maps $\phi_t:E\to E$ which governs the evolution in time of the phase-vectors. The couple $(E,\phi_t)$ defines a dynamical system. One of the important questions in qualitative theory of PDEs is to describe the long time behavior of $\phi_t.$ A Borel measure $\mu$ on $E$ is called invariant for $\phi_t$ if for any Borel set $\Gamma\subset E$, for any $t$, we have
\begin{equation*}
\phi_{t*}\mu(\Gamma):=\mu(\phi_t^{-1}(\Gamma))=\mu(\Gamma).
\end{equation*}
Existence of such a measure allows to draw some conclusions on long time properties for the system $(E,\phi_t)$ (see e.g. Birkhoff, Poincaré and von Neumann theorems in Section $\ref{sect_ergo}$). The concept of invariant measure plays also an important role in probabilistic global wellposedness\footnote{Let us mention the paper by Burq and Tzvetkov \cite{burktzvt_probwav} introducing new approach to study probabilistic wellposedness which does not use an invariance property.} for PDEs by providing a way to control globally the induced flow.

There are, at least, two approaches to construct invariant measures; for finite-dimensional equations representing  the evolution of a divergence free vector-field, the so-called Liouville theorem states that the Lebesgue measure defined on the associated phase space is preserved along the time. This result covers indeed the finite-dimensional Hamiltonian flows and their theory of Gibbs measures. The question of infinite-dimensional Gibbs measures (for Hamiltonian PDEs) is not directly implied by this general theorem, but is studied in many works with its help.\\ 
The other result is given by the Krylov-Bogoliubov theorem for dynamical systems under some compactness assumptions. A method has been developed with use of this argument to approach more general PDEs. \\
 Let us briefly present the general philosophy of two approaches of the PDEs invariant measures problem and compare them on some of their characteristic points. 
\paragraph{\textbf{Gibbs measures theory for PDE.}}
For a PDE having a "nicely structured" conservation law $E(u)$, we can expect that, under proper definition, the expression $"e^{-E(u)}du"$ could be an invariant measure. An approach consists in projecting the PDE on finite dimensional subspaces of increasing dimension. Then a sequence of ordinary differential equations are considered and the idea is to use the Liouville theorem. We get, then, a sequence (w.r.t. the dimension) of invariant measures (having a density w.r.t. Gaussian measures). An accumulation point is the measure we look for.
\paragraph{\textbf{FDL measures theory.}}The Fluctuation-Dissipation-Limit approach consists in approximating the Hamiltonian dynamics by some kind of "compact" ones. The Krylov-Bogoliubov theorem provides then a sequence of invariant measures whose accumulation point could be invariant for the limiting equation. Namely, a damping term (given by a negative operator) and a stochastic forcing are added to the equation. The former should give the compactness in question while the latter is intended to maintain the evolution that the damping tends to attenuate:
\begin{equation*}
"PDE=\alpha\text{Damping}+s(\alpha)\text{(Forcing)}".
\end{equation*}
The function $s$ will be chosen so that there will be a balance between the contributions of the added two terms and to ensure then the tightness of the sequence of constructed invariant measures in order to get the existence of the desired measure. Here again, a leading role is played by conservation laws. 
\paragraph{\textbf{Gibbs measures vs FDL measures.}}
The first remarkable difference between the two approaches is that Gibbs measures reduce the regularity of the underlying conservation law, that is, their supports are less regular than the conservation law used in the construction (with reduction of $1/2+$), this fact imposes systematically a \textit{threshold of regularity} to the support. Whereas the FDL measures increase (by $1$, if damped by $\Delta$) the initial regularity; the construction does not impose directly a threshold on the "living space" of these measures. This makes the Gibbs measures particularly adapted to approach some spaces of low regularity and to give a probabilistic alternative to the Cauchy theory for PDEs. However, FDL measures can approach some high regularity spaces, seemingly inaccessible by the formers, to establish long time behavior properties of PDEs (see \cite{sykg}). The intermediate situation is common to both.\\
The second fact is that Gibbs measures enjoy many good properties being of Gaussian type, while in the case of FDL measures no qualitative property is directly deduced. However, some stochastic methods are developed in \cite{kuksin_nondegeul,armen_nondegcgl,KS12} to investigate non-degeneracy properties.

For Klein-Gordon related equations, Gibbs measures are constructed both in finite or in infinite volumes, see for instance \cite{burktzvet3Dinvgibb, bourbulut, AdSkgnonpdqset,xu2014gibbsmNLWinfvol}.  These measures concern radial solutions (in the $3D$ case) and are then concentrated on $\mathcal{H}^{1/2-,-1/2-}$.  The question of non radial Gibbs type measure for the three-dimensional Klein-Gordon equation encounters an obstruction. Indeed, such a measure has to be defined on $\mathcal{H}^{-1/2-,-3/2-}$ where the nonlinearity would become problematic. In contrast with the loss of regularity inherent to the Gibbs measure approach, the FDL method proceeds by regularization. In that approach, the nonlinearity is still tractable even in a non radial context. However, as we will see it later on, the uniqueness of a coercive conservation law gives rise to some difficulties in the method.
\subsection{Statement of the main result and comments}
To present the main result of the paper, recall that $\lambda_0$ denotes the first eigenvalue of $-\Delta$ in both settings considered in this work.
\begin{thm}
Let $m_0^2>-\lambda_0$, then, in both settings, there is an invariant measure $\mu$ for $(\ref{KG})$ defined on $\mathcal{H}^{1,0}$ and satisfying:
\begin{itemize}
\item \begin{equation*}
\mu(\mathcal{H}^{2,1})=1;
\end{equation*}
\item \begin{equation*}
0<\int_{\mathcal{H}^{1,0}}\|y\|_{2,1}^2\mu(dy)<\infty;
\end{equation*}
\item there is $\sigma>0$ such that
\begin{equation*}
\int_{\mathcal{H}^{1,0}}e^{\sigma E(y)}\mu(dy)<\infty,
\end{equation*}
consequently $\mu$ enjoys a Gaussian control property w.r.t. the norm $\mathcal{H}^{1,0};$
\item the distribution under $\mu$ of the Hamiltonian $E(y)$ has a density w.r.t. the Lebesgue measure on $\R.$
\end{itemize}
\end{thm}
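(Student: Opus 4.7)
The plan is to implement the Fluctuation-Dissipation-Limit strategy sketched in the introduction. First I would introduce the damped-forced perturbation
\begin{equation*}
\ds u-\Delta_0 u+u^3+\alpha(-\Delta_0)\dt u=\sqrt{\alpha}\,\eta^\alpha,
\end{equation*}
where $\alpha>0$ is a small parameter and $\eta^\alpha$ is a white-in-time noise with spatial covariance operator whose Fourier coefficients $(b_j)$ satisfy $\sum_j (m_0^2+\lambda_j) b_j^2<\infty$. Because the dissipation acts via $(-\Delta_0)$, the linear parabolic-type smoothing gains one derivative, which is exactly what is needed to place the invariant measures on $\mathcal{H}^{2,1}$. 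For fixed $\alpha$, standard arguments give global well-posedness of the stochastic flow on $\mathcal{H}^{1,0}$, and an application of It\^o's formula to the Hamiltonian $E$ (and to $E^2$ and $e^{\sigma E}$) should yield, after balancing the damping and the noise, uniform-in-$\alpha$ bounds of the form
\begin{equation*}
\E\|y(t)\|_{2,1}^2\lleq 1\quad\text{and}\quad \E\,e^{\sigma E(y(t))}\lleq 1,
\end{equation*}
for some $\sigma>0$ and for $t$ ranging over $\R_+$.

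Next I would apply the Bogolyubov--Krylov theorem in $\mathcal{H}^{1,0}$ to produce, for each $\alpha$, an invariant measure $\mu_\alpha$ of the perturbed dynamics, concentrated on $\mathcal{H}^{2,1}$ and inheriting the above uniform estimates. The compact embedding $\mathcal{H}^{2,1}\hookrightarrow\mathcal{H}^{1,0}$ yields tightness of $\{\mu_\alpha\}_\alpha$ on $\mathcal{H}^{1,0}$; by Prokhorov, along a subsequence $\alpha_k\to 0$ one extracts a weak limit $\mu$. To upgrade $\mu$ to an invariant measure of the deterministic flow $\phi_t$ of $(\ref{KG})$, I would pass to the limit in the invariance identity $\int f\,d\mu_{\alpha_k}=\int \E f(\Phi_t^{\alpha_k}\cdot)\,d\mu_{\alpha_k}$ for test functions $f\in C_b(\mathcal{H}^{1,0})$, using the continuity of $\phi_t$ on $\mathcal{H}^{1,0}$ together with a convergence-in-probability estimate $\Phi_t^{\alpha_k}(y)\to \phi_t(y)$ as $\alpha_k\to 0$ on bounded sets of $\mathcal{H}^{1,0}$. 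The three quantitative properties (concentration on $\mathcal{H}^{2,1}$, finite second moment of $\|\cdot\|_{2,1}$, exponential moment of $E$) then follow by lower-semicontinuity from the uniform estimates, and strict positivity of $\int\|y\|_{2,1}^2\mu(dy)$ is ensured by a lower bound on the noise-driven energy that rules out $\mu=\delta_0$.

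For the final property — absolute continuity of $E_*\mu$ with respect to the Lebesgue measure on $\R$ — I would work at the stochastic level: a Malliavin-type or direct non-degeneracy argument on a sufficiently rich finite-dimensional projection of the noise should establish that $E(y(t))$ has a density under $\mu_\alpha$, with an $L^1$ bound on that density that is uniform in $\alpha$. Equi-integrability then ensures the density survives in the limit $\alpha_k\to 0$, giving the claim.

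The hard part will be precisely the balancing act forced on us by the absence of a second coercive conservation law. In contrast with the $2D$ Navier--Stokes or CGL settings, where multiple conserved quantities let one tune the noise/damping pairing against one quantity while estimating another, here all a priori control must be squeezed out of the single Hamiltonian $E$. Getting a uniform-in-$\alpha$ exponential moment $\E\,e^{\sigma E}\lleq 1$, and simultaneously a uniform $\mathcal{H}^{2,1}$ bound, from the same It\^o expansion, is the delicate point; it is likely to impose a specific choice of the noise spectrum and of the dissipation, and it is also what drives the non-degeneracy argument needed for the density of $E_*\mu$.
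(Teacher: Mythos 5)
Your overall architecture (damped--forced approximation, It\^o estimates, Bogolyubov--Krylov, tightness, inviscid limit) is the same as the paper's, but there is a genuine gap at the step you describe as ``an application of It\^o's formula to the Hamiltonian $E$ (and to $E^2$ and $e^{\sigma E}$)''. For the damping scheme $\alpha\Delta_0\dt u$, the It\^o drift of $E$ along the stochastic flow is exactly $\alpha\bigl(\tfrac{A_0}{2}-\|\dt u\|_1^2\bigr)$. This controls only the time-averaged $H^1$-norm of $\dt u$; it gives no information whatsoever on $\|u\|_2$, so it cannot produce the uniform bound $\E\|y(t)\|_{2,1}^2\lleq 1$. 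Worse, since $-\|\dt u\|_1^2$ does not dominate $-cE$ (the energy also contains $\|u\|_1^2$ and $\|u\|_{L^4}^4$), the It\^o expansion of $E^p$ or $e^{\sigma E}$ has no coercive term to run a Gronwall argument against, so the uniform exponential moment does not follow either. This is precisely the ``lack of a second coercive conservation law'' you flag at the end, and the paper's resolution is not a tuning of the noise spectrum but the introduction of \emph{almost conservation laws}: $G_1=E+\tfrac{\alpha(m_0^2+\lambda_0)}{2}\int u\dt u+\cdots$ and $G_2=E-\tfrac{\alpha}{2}\int\dt u\,\Delta_0 u+\tfrac{\alpha^2}{4}\|u\|_2^2$. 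The cross terms convert the drift into $-\alpha L_1$ and $-\alpha L_2$ with $L_1\ggeq G_1$ (giving exponential decay, hence all moments $\int G_1^p\,d\mu_\alpha\leq(2pA_0/\gamma_0)^p$ and, by summing the series, the exponential moment) and $L_2\ggeq\|y\|_{2,1}^2$ (giving the $\mathcal{H}^{2,1}$ bound, at the cost of an $\alpha\|u\|_{L^6}^6$ error which is absorbed using the $G_1$-moments and $H^1\subset L^6$). Without these modified functionals your chain of estimates does not close.

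On the last item, your Malliavin-type suggestion is not what the paper does and glosses over the real obstruction: the natural ``noise intensity'' seen by $E(y)$ is $\sum_m a_m^2(\dt u,e_m)^2$, which degenerates on the set $\{\dt u=0\}$, and there is no second conservation law available to bound $\mu_\alpha$ near that set uniformly in $\alpha$ (this is exactly where the Euler/CGL arguments of Kuksin--Shirikyan break down here). The paper instead derives a balance relation $\E_{\mu_\alpha}\bigl[\Bbb 1_\Gamma(E)\sum_m a_m^2(\dt u,e_m)^2\bigr]\leq C\,l(\Gamma)$ via an auxiliary ODE $-\Phi''_\lambda+\lambda\Phi_\lambda=h$ and stationarity, then splits the phase space into $\{\|\dt u\|\geq\epsilon,\ \|\dt u\|_1\leq R\}$ and its complement to convert this into absolute continuity on $(0,\infty)$, and finally runs a separate finite-dimensional projection argument to show $\mu(\{u\equiv 0\})=0$. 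If you want your route to work you would need to supply a uniform-in-$\alpha$ lower bound on the Malliavin covariance of $E(y(t))$, which runs into the same degeneracy at $\dt u=0$; some version of the paper's splitting seems unavoidable.
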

\begin{rmq}
In fact we have a family of invariant measures for $(\ref{KG})$ on $\mathcal{H}^{2,1}$, one can see that after parametrising the diffusion constants associated to the approximation problem $(\ref{ddKG}).$ 
\end{rmq}
The Poincaré recurrence theorem (see Section \ref{sect_ergo}) implies
\begin{cor}
For $\mu$-almost any $y=[u,v]$ in $\mathcal{H}^{2,1}$, there is a sequence $t_k$ going to infinity as $k\to\infty$ such that
\begin{equation*}
\lim_{k\to\infty}\|\phi_{t_k}y-y\|_{2,1}=0.
\end{equation*}
Here $\phi_t$ denotes the flow of $(\ref{KG})$ on $\mathcal{H}^{2,1}.$
\end{cor}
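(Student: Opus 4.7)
The plan is to deduce the corollary from the classical Poincaré recurrence theorem applied to the measure-preserving flow $(\phi_t,\mu)$, after first verifying that the relevant abstract hypotheses hold in our concrete setting.

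First I would set up the probability space. Since $\mu(\mathcal{H}^{2,1})=1$ by the theorem, the restriction of $\mu$ to $\mathcal{H}^{2,1}$ is a Borel probability measure on the separable Polish space $\mathcal{H}^{2,1}$. The (sub-critical) deterministic well-posedness theory for the cubic Klein-Gordon equation in $3D$ gives a continuous flow $\phi_t:\mathcal{H}^{2,1}\to\mathcal{H}^{2,1}$ (global existence at this regularity follows from the energy conservation and the Strichartz machinery recalled earlier in the paper), and invariance of $\mu$ under $\phi_t$ on $\mathcal{H}^{1,0}$ together with $\mu(\mathcal{H}^{2,1})=1$ yields $(\phi_t)_*\mu=\mu$ on $\mathcal{H}^{2,1}$.

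Next I would carry out the recurrence argument. Fix a sampling time, say $t=1$, so that $T:=\phi_1$ is a measurable, $\mu$-preserving self-map of $\mathcal{H}^{2,1}$. Exploiting separability, choose a countable basis $(B_n)_{n\ge1}$ of open balls in $\mathcal{H}^{2,1}$ (for instance all balls with rational radius and centers in a countable dense set). For every $n$ with $\mu(B_n)>0$, the Poincaré recurrence theorem asserts the existence of a full-measure subset $B_n^\star\subset B_n$ such that every $y\in B_n^\star$ satisfies $T^{k}y\in B_n$ for infinitely many integers $k$. Setting
\begin{equation*}
\Omega:=\bigcap_{n:\,\mu(B_n)>0}\Bigl(B_n^\star\cup(\mathcal{H}^{2,1}\setminus B_n)\Bigr),
\end{equation*}
one checks that $\mu(\Omega)=1$ and that every $y\in\Omega$ admits, for each of its basic neighbourhoods $B_n\ni y$, a sequence of integers $k_j\to\infty$ with $T^{k_j}y\in B_n$; diagonalising over a shrinking subsequence of balls centred near $y$ produces $t_k:=k_j\to\infty$ with $\|\phi_{t_k}y-y\|_{2,1}\to 0$.

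The main technical point, rather than the abstract recurrence itself, is the requirement that the flow actually map $\mathcal{H}^{2,1}$ into itself (so that the abstract theorem applies on the set where the corollary is stated). This is where the persistence of $H^2\times H^1$ regularity, which is a standard consequence of differentiating the equation once and using $H^1\hookrightarrow L^6$ in $3D$ together with energy estimates, is essential; once it is in place, the rest of the argument is the verbatim metric-space version of Poincaré recurrence and involves no further PDE input.
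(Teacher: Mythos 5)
Your proposal is correct and follows essentially the same route as the paper, which simply invokes its quantitative Poincaré recurrence theorem from Section \ref{sect_ergo} ($\overline{\lim}_{t\to\infty}\mu(A\cap\phi_t^{-1}A)\geq\mu(A)^2$) and leaves the passage to pointwise metric recurrence implicit; you carry out that standard passage explicitly via a countable basis of balls and a diagonal argument. The only cosmetic omission is that you should also discard the (null) union of basic balls of $\mu$-measure zero so that every remaining point has only positive-measure basic neighbourhoods, but this is a one-line fix and does not affect the argument.
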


Let us make some comments on the results. First, remark that by Sobolev embedding, the solutions concerned by our results are, in particular, continuous in the $x$ variable. Second, in the case where the equation is posed on a bounded domain, $\lambda_0$ is positive; then the massless case, i.e. the wave equation, is covered by our result. Moreover, $m_0$ is also allowed to be an imaginary number, in that situation $(\ref{KG})$ is associated to a particle with imaginary mass. Such hypothetical particles, named tachyons, are used in some areas of theorical physics.

To obtain these results, an additional difficulty compared to the earlier works is the fact that we know only one coercive conservation law for KG (in the case of the torus we have also the momentum which is not coercive), that implies a "lack of estimates". Notice also that the FDL approach was developped for Hamiltonian PDEs having at least two "good" conservation laws \cite{kuk_eul_lim,KS04,KS12,sykg}. The present paper is also intended to extend this approach to Hamiltonian PDEs having one conservation law.\\
In order to confront the "lack of conservation" present in our context, we introduce what we call \textit{almost conservation laws} associated to $(\ref{KG})$. These quantities play essentially the same role  in the construction of an invariant measure as the two conservation laws, however, they cannot be used in studying  its qualitative properties. That problem is solved by an approximation argument combined with the approach of $\cite{armen_nondegcgl,KS12}$.\\ Notice that the concept of "almost conservation laws" is the main ingredient in the so-called \textit{I}-method technique, overcoming the lack of conservation in the study of wellposedness and asymptotic behavior of dispersive PDEs (see e.g. \cite{ckstt,tao}). However, while in the $I-$method theory the modification consists in damping the high frequencies, in our situation we opt for an additive regular perturbation which accommodates better with our damping scheme. In Section \ref{sect3} we define precisely our understanding of that concept, then we introduce two of such quantities and derive their respective dissipation rates whose statistical control along the time and the viscosity parameter takes the central place in our analysis. Notice also that an argument of modification of energy was developed in \cite{nikmqi,ohtv} in the context of quasi-invariant measures theory for Hamiltonian PDEs. \\
We now describe the Fluctuation/Dissipation scheme that we apply to the Klein-Gordon equation in our work. Consider
the stochastic PDE
\begin{equation}\label{ddKG}
\ds u-\Delta_0 u+u^3=\alpha\Delta_0\dt u+\sqrt{\alpha}\eta,
\end{equation}
where 
\begin{equation*}
\eta(t,x)=\frac{d}{dt}\zeta(t,x)=\frac{d}{dt}\sum_{m=0}^\infty a_me_m(x)\beta_m(t).
\end{equation*}
Here $\beta_m$ are independent standard Brownian motions and $a=(a_m)$ is a sequence of  real numbers. For $n\geq 0,$ define the number
\begin{equation*}
A_n=\sum_{m=0}^\infty a_m^2\lambda_m^{n}.
\end{equation*}
The vector $y_t=[u,\dt{u}]$ is a random variable on a complete probability space $(\Omega,\mathcal{F},\P)$ with range in Sobolev spaces. We assume that the filtration $\mathcal{F}_t$ associated to $\zeta_t$ is right continuous and augmented w.r.t. $(\mathcal{F},\P).$\\
For given positive quantities $A,B$ satisfying
$A\leq c_1B$, we write $A\lleq B$ or $A\lleq_{c_1}B.$ The vectors in $H^m\times H^n=:\mathcal{H}^{m,n}$ are denoted with the symbol $[,]$ while the symbol $(,)$ represents the inner product in $L^2$. 

\section{Ergodic theorems and some consequences}\label{sect_ergo}
In this section we discuss some details about the PDE's motivations of invariant measures theory via some general results from ergodic theory. We can also see the introduction of \cite{thomann}.
\subsection{Ergodic theorems}
Consider the measurable dynamical system $(X,\phi_t,\mu)$ constructed from an evolution equation, here the probability measure $\mu$ is invariant under the flow $\phi_t$. In the case of a reversible dynamics (e.g. Hamiltonian equations), the transformations $(\phi_t)_{t\in\R}$ form a group and $\phi_t^{-1}=\phi_{-t}$, we adopt this hypothesis in the present section altough all the results we are discussing here can be adapted to the semi-group case by classical ways. In \cite{koopman}, Koopman observes that the (a priori) nonlinear transformations $(\phi_t)$ induce linear ones on the space $L^2(X,\mu)$. These induced transformations $U_t$ are defined for any function $f:L^2(X,\mu)\to\R$ by
\begin{equation*}
U_tf(w)=f(\phi_tw) \  \ \forall w\in X.
\end{equation*}

The linearity and group property of $(U_t)$ are clear, and for any $t\in\R,$ $U_t$ defines an isometry on $L^2(X,\mu)$. In fact
\begin{equation*}
\|U_tf\|_{L^2}^2=\int_X|U_tf(w)|^2\mu(dw)=\int_X|f(\phi_tw)|^2\mu(dw).
\end{equation*}
A standard approximation (by simple functions) argument combined with the invariance of $\mu$ establishes the desired property. We also remark that $U_t^{-1}=U_{-t}.$ 
A message contained in Koopman's observation is that, provided that an invariant measure is given, the "nonlinear description" of the evolution of the states can be replaced by a "linear description" on the observables. We then pass from a nonlinear "microscopic" study to a linear "macroscopic" one. In the latter setting, general theorems such as Von Neumann and Birkhoff ergodic theorems, can be used to obtain some statistical properties of the dynamics. Let us present a version of these theorems (for their proofs and more results concerning them see \cite{krengel,coudene}). 
Let $T>0$, set the Birkhoff average
\begin{equation*}
S_Tf(w)=\frac{1}{T}\int_0^TU_tf(w)dt,
\end{equation*}
and the following invariants of the evolution
\begin{align*}
I_1 &=\{h\in L^2(X,\mu):\ U_th=h,\ \forall t\},\\
I_2 &=\{A\ \in \text{Bor}(X):\ \phi_t^{-1}A=A,\ \forall t\},
\end{align*}
where Bor$(X)$ is the Borel $\sigma-$algebra of $X.$
$I_1$ and $I_2$ are related by the fact that
$$A\in I_2\Leftrightarrow \Bbb 1_A\in I_1.$$
\begin{thm}[Von Neumann]
For all $f\in L^2(X,\mu)$, we have, as $T\to\infty$, 
\begin{equation*}
S_Tf\to P_{I_1}f \ \ \text{in $L^2(X,\mu)$},
\end{equation*}
where $P_{I_1}$ denotes the orthogonal projection onto $I_1.$
\end{thm}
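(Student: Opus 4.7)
The plan is to exploit the Hilbert space structure via the orthogonal decomposition $L^2(X,\mu) = I_1 \oplus I_1^\perp$, together with the fact that each $U_t$ is an isometry, so that $S_T$ is a linear contraction ($\|S_T\|\le 1$) on $L^2(X,\mu)$. By the standard $\varepsilon/3$ argument, it then suffices to prove convergence on $I_1$ and on a dense subset of $I_1^\perp$. On $I_1$ the claim is trivial, since $U_t f = f$ gives $S_T f = f = P_{I_1} f$; so the real content lies in showing $S_T f \to 0$ for every $f\in I_1^\perp$.

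The central algebraic step is to identify $I_1^\perp$ with the closed linear span of the set of coboundaries
\begin{equation*}
\mathcal{C} = \{U_s g - g \,:\, g\in L^2(X,\mu),\ s\in\R\}.
\end{equation*}
For one inclusion, the unitarity of $U_t$ combined with the invariance of $h\in I_1$ yields $(U_s g - g, h) = (g, U_{-s}h) - (g,h) = 0$. For the reverse, if $h$ annihilates all of $\mathcal{C}$, then $(U_{-s}h - h, g) = 0$ for every $g\in L^2$ and every $s$, forcing $U_{-s}h = h$, i.e. $h\in I_1$.

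For an arbitrary element $f = U_s g - g$ of $\mathcal{C}$, a direct computation with a change of variable gives
\begin{equation*}
S_T f \;=\; \frac{1}{T}\int_0^T (U_{t+s} g - U_t g)\, dt \;=\; \frac{1}{T}\left(\int_T^{T+s} U_t g\, dt - \int_0^s U_t g\, dt\right),
\end{equation*}
whose $L^2$-norm is bounded by $2|s|\,\|g\|/T$ and so vanishes as $T\to\infty$. By linearity $S_T f\to 0$ for every $f$ in the linear span of $\mathcal{C}$, and the uniform bound $\|S_T\|\le 1$ extends this to the closure, which is $I_1^\perp$. Decomposing an arbitrary $f\in L^2(X,\mu)$ as $P_{I_1}f + (f - P_{I_1}f)$ then concludes the proof.

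The main obstacle I anticipate is not the convergence argument itself (which is essentially the classical mean ergodic theorem of von Neumann), but the technical verification that the Bochner integrals defining $S_T f$ make sense in $L^2(X,\mu)$ and that the change-of-variable computation above is legitimate. This requires strong continuity of $t\mapsto U_t f$, which in turn follows from the joint measurability of $(t,w)\mapsto \phi_t w$ via Fubini and a standard density argument (strong continuity on a dense subset of $L^2$ combined with the uniform bound $\|U_t\|=1$). In the PDE framework considered in the paper these hypotheses are routinely satisfied, but they would need to be checked once and for all before quoting the ergodic theorem.
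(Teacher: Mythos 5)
Your proof is correct. Note that the paper does not actually prove this theorem: it states it and refers the reader to the cited references (Krengel, Coud\`ene) for a proof, so there is no in-paper argument to compare against. What you give is precisely the classical proof of von Neumann's mean ergodic theorem: the orthogonal decomposition $L^2(X,\mu)=I_1\oplus\overline{\operatorname{span}}\,\mathcal{C}$ with $\mathcal{C}$ the set of coboundaries $U_sg-g$, the telescoping bound $\|S_T(U_sg-g)\|_{L^2}\le 2|s|\,\|g\|/T$, and the extension to all of $I_1^\perp$ via the uniform contraction bound $\|S_T\|\le 1$. Your closing caveat about strong continuity of $t\mapsto U_tf$, needed to make sense of the Bochner integral $\frac{1}{T}\int_0^T U_tf\,dt$ in the continuous-time setting, is exactly the right technical point to flag; in the paper's framework it follows from continuity of the flow $\phi_t$ on the support of $\mu$ together with dominated convergence and the density argument you describe.
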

\begin{thm}[Birkhoff]
For all $f\in L^1(X,\mu)$, we have as $T\to\infty$ 
\begin{equation*}
S_Tf\to \E_{I_1}f \ \ \text{in $L^1(X,\mu)$},
\end{equation*}
where $E_{I_2}$ denotes the conditional expectation w.r.t. $I_2.$ This convergence holds also $\mu-$almost surely on $X$.
\end{thm}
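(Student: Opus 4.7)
The approach I would follow is the classical one: derive Birkhoff's pointwise statement from a maximal ergodic inequality, and then upgrade to $L^1$-convergence. The starting point is, for every $f\in L^1(X,\mu)$ and every $\lambda>0$, the weak-type bound
\begin{equation*}
\mu\left\{w\in X:\sup_{T>0}|S_Tf(w)|>\lambda\right\}\leq \frac{\|f\|_{L^1}}{\lambda}.
\end{equation*}
I would obtain this via the Hopf/Garsia ``sunrise'' argument applied to the averages $S_Tf$, relying on the fact that each $U_t$ is an $L^1$-contraction, which follows from the invariance of $\mu$ exactly as in the $L^2$-isometry statement already recalled above. Once this maximal inequality is in hand, the remainder proceeds along standard lines.

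Defining $\overline f(w)=\limsup_{T\to\infty} S_Tf(w)$ and $\underline f(w)=\liminf_{T\to\infty} S_Tf(w)$, a short change of variable in the integral defining $S_Tf$ shows that both functions are $U_t$-invariant, hence $I_2$-measurable. Assuming $\mu\{\underline f<a<b<\overline f\}>0$ for some rationals $a<b$, a careful application of the maximal inequality on the $I_2$-invariant set $\{\underline f<a\}\cap\{\overline f>b\}$ produces the impossible inequality $(a-b)\mu(\cdot)\geq 0$, so $\overline f=\underline f=:f^\star$ $\mu$-almost surely. To identify $f^\star$, for every $A\in I_2$ the invariance of $\mu$ under $\phi_t$ gives $\int_A S_Tf\,d\mu=\int_A f\,d\mu$; passing to the limit by dominated convergence (with a suitable truncation dominated by the maximal function) yields $\int_A f^\star\,d\mu=\int_A f\,d\mu$ for every $A\in I_2$, i.e. $f^\star=\E[f\,|\,I_2]$ by definition of the conditional expectation.

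For the $L^1$-convergence I would split $f=f_N+r_N$ with $f_N=f\,\Bbb 1_{\{|f|\leq N\}}\in L^\infty\cap L^2(X,\mu)$. The Von Neumann theorem already established, together with the boundedness of $f_N$ on the finite-measure space $(X,\mu)$, provides $L^1$-convergence of $S_Tf_N$ to $\E[f_N\,|\,I_2]$, while the contraction $\|S_Tr_N\|_{L^1}\leq \|r_N\|_{L^1}$ (again by invariance of $\mu$) makes the tail negligible as $N\to\infty$. A standard diagonal argument then delivers the claimed $L^1$-convergence of $S_Tf$ to $\E[f\,|\,I_2]$.

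The main obstacle I anticipate is not the scheme itself but its adaptation to the continuous-time flow $(\phi_t)_{t\in\R}$. Most standard references state Birkhoff's theorem for the iterates of a single measure-preserving map; to transfer the statement to our setting I would rewrite $S_Tf=\frac{1}{T}\int_0^T f\circ\phi_s\,ds$, discretise via the integer part $\lfloor T\rfloor$, and uniformly control the residual average $\frac{1}{T}\int_{\lfloor T\rfloor}^T f\circ\phi_s\,ds$ by a maximal function over unit time intervals. This requires the joint measurability of $(t,w)\mapsto f(\phi_tw)$, which is classical and harmless in our Klein-Gordon context where $\phi_t$ is continuous on the support of $\mu$, but it is the only genuinely delicate technical point of the proof.
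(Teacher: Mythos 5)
The paper does not actually prove this theorem: it is quoted as a classical result, with the reader referred to \cite{krengel,coudene} for proofs, and only the \emph{consequences} (the propositions of Section \ref{chap4_erg}) are proved in the text — and those rely only on the von Neumann theorem, not on Birkhoff. So there is no in-paper argument to compare against; what you have written is the standard textbook proof (Hopf maximal ergodic inequality, oscillation argument on the invariant set $\{\underline f<a<b<\overline f\}$, identification of the limit via $\int_A S_Tf\,d\mu=\int_A f\,d\mu$ for $A\in I_2$, and $L^1$-convergence by truncation plus the contraction $\|S_T\|_{L^1\to L^1}\leq 1$), together with the correct observation that the continuous-time case reduces to the discrete one by comparing $S_Tf$ with averages of the time-one map applied to $g=\int_0^1 f\circ\phi_s\,ds$. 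This scheme is sound and is essentially what the cited references do.

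Two small points to tighten. First, the weak-type bound $\mu\{\sup_T|S_Tf|>\lambda\}\leq\|f\|_{L^1}/\lambda$ by itself does not drive the oscillation argument; you need the stronger Hopf form $\int_{\{\sup_T S_Tg>0\}}g\,d\mu\geq 0$, applied to $g=(f-b)\Bbb 1_{E_{a,b}}$ and $g=(a-f)\Bbb 1_{E_{a,b}}$ on the invariant set $E_{a,b}=\{\underline f<a\}\cap\{\overline f>b\}$, to obtain $b\,\mu(E_{a,b})\leq\int_{E_{a,b}}f\,d\mu\leq a\,\mu(E_{a,b})$ and hence $\mu(E_{a,b})=0$; since you invoke the Hopf/Garsia argument you have the right tool, but the displayed inequality alone would not suffice. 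Second, in the identification step you cannot dominate $S_Tf$ by the maximal function of a general $f\in L^1$, since that maximal function need not be integrable (it is only weak $L^1$); the clean route is to first establish the $L^1$-convergence (which your truncation argument gives) and then pass to the limit in $\int_A S_Tf\,d\mu=\int_A f\,d\mu$ using $L^1$-convergence rather than dominated convergence. With these adjustments the proof is complete and consistent with the sources the paper cites.
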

\subsection{Consequences}\label{chap4_erg}
This subsection is an "adaptation" to continuous dynamical systems of the idea contained in \cite{coudene} (Chapter $1$, Exercise $9$), one can also see the proof given in \cite{taoerg} and the discussion of \cite{thomann}.\\  
Let $A$ and $B$ be two Borel sets in $X$, $1_A$ and $1_B$ are the indicator functions of $A$ and $B$ respectively, we denote the orthogonal projection onto $I_1$ just by $P$. We have
\begin{prop}
\begin{equation}
\frac{1}{t}\int_0^t\mu(A\cap\phi_s^{-1}B)ds\to\langle P1_A,P1_B\rangle \ \ \text{as $t\to\infty$}.\label{conseq1}
\end{equation}
In particular,
\begin{equation}
\frac{1}{t}\int_0^t\mu(A\cap\phi_s^{-1}A)ds\to\|P1_A\|^2_{L^2} \ \ \text{as $t\to\infty$},\label{conseq2}
\end{equation}
and 
\begin{equation}
\mu(A)^2\leq \lim_{t\to\infty}\frac{1}{t}\int_0^t\mu(A\cap\phi_s^{-1}A)ds\leq \|P1_A\|_{L^2(X)}\sqrt{\mu(A)}.\label{conseq3}
\end{equation}
\end{prop}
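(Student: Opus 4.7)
The strategy is to rewrite the quantity $\mu(A\cap\phi_s^{-1}B)$ as an $L^2$ inner product involving the Koopman operator $U_s$, and then apply the Von Neumann theorem already stated in the paper. First I would use the invariance of $\mu$ to write
\begin{equation*}
\mu(A\cap\phi_s^{-1}B)=\int_X 1_A(w)\, 1_B(\phi_s w)\,\mu(dw)=\int_X 1_A(w)\,U_s1_B(w)\,\mu(dw)=\langle 1_A, U_s 1_B\rangle_{L^2(X,\mu)}.
\end{equation*}
Averaging in $s$ and pulling the average inside the inner product yields
\begin{equation*}
\frac{1}{t}\int_0^t\mu(A\cap\phi_s^{-1}B)\,ds=\Big\langle 1_A,\frac{1}{t}\int_0^t U_s 1_B\,ds\Big\rangle=\langle 1_A, S_t 1_B\rangle.
\end{equation*}
By Von Neumann's ergodic theorem, $S_t 1_B\to P1_B$ in $L^2(X,\mu)$, and by the Cauchy--Schwarz continuity of the inner product we obtain $\langle 1_A, S_t 1_B\rangle\to\langle 1_A, P 1_B\rangle$. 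Since $P$ is an orthogonal projection it is self-adjoint and idempotent, so $\langle 1_A, P 1_B\rangle=\langle P 1_A, P 1_B\rangle$, which proves $(\ref{conseq1})$. Taking $A=B$ immediately gives $(\ref{conseq2})$.

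For the double inequality $(\ref{conseq3})$ I would argue directly from properties of $P$ and $\mu$. The constant function $1$ lies in $I_1$ since $U_t 1=1$, hence $P1=1$ and
\begin{equation*}
\mu(A)=\langle 1_A, 1\rangle=\langle 1_A, P 1\rangle=\langle P 1_A, 1\rangle.
\end{equation*}
Cauchy--Schwarz combined with $\|1\|_{L^2}=1$ (as $\mu$ is a probability measure) then gives $\mu(A)\le\|P 1_A\|_{L^2}$, so $\mu(A)^2\le\|P 1_A\|_{L^2}^2$, which is the left-hand inequality in view of $(\ref{conseq2})$. For the upper bound, using self-adjointness and idempotency of $P$,
\begin{equation*}
\|P 1_A\|_{L^2}^2=\langle P 1_A, P 1_A\rangle=\langle 1_A, P 1_A\rangle\le\|1_A\|_{L^2}\|P 1_A\|_{L^2}=\sqrt{\mu(A)}\,\|P 1_A\|_{L^2},
\end{equation*}
which combined once more with $(\ref{conseq2})$ is the desired right-hand inequality.

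There is no real obstacle here beyond bookkeeping: the only subtlety is justifying the interchange of the time average with the $L^2$ inner product, which is standard (e.g.\ by Fubini for Bochner integrals, or by approximating $\frac{1}{t}\int_0^t U_s 1_B\,ds$ by Riemann sums and using the isometry property of $U_s$ together with $1_A,1_B\in L^2(X,\mu)$). All other steps are formal manipulations with the orthogonal projection and Cauchy--Schwarz.
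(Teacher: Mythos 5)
Your proposal is correct and follows essentially the same route as the paper: identify $\mu(A\cap\phi_s^{-1}B)$ with $\langle 1_A,U_s1_B\rangle$, apply the von Neumann theorem (using that $L^2$ convergence of $S_t1_B$ implies convergence of the inner products), and obtain $(\ref{conseq3})$ from the self-adjointness and idempotency of $P$ together with Cauchy--Schwarz; the only cosmetic difference is that for the lower bound you use $P1=1$ where the paper uses the conditional-expectation identity $\E_\mu\E_{I_1}1_A=\E_\mu 1_A$, which amounts to the same computation. (One trivial remark: the identity $\mu(A\cap\phi_s^{-1}B)=\int 1_A\cdot(1_B\circ\phi_s)\,d\mu$ is pure set theory and does not use invariance of $\mu$; invariance enters only through the Koopman isometry and the ergodic theorem.)
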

\begin{proof}
We prove $(\ref{conseq2})$ by taking $B=A$ in $(\ref{conseq1})$. Now $(\ref{conseq3})$ is derived from $(\ref{conseq2})$ as follow
\begin{align*}
\mu(A)=\E_\mu1_A=\E_\mu\E_{I_1}(1_A)=\E_\mu(P1_A)=\|P1_A\|_{L^1}\leq \|P1_A\|_{L^2}.
\end{align*}
On the other hand, we use the property $P^*P=P^2=P$ and the Cauchy-Schwarz inequality to find 
\begin{align*}
\|P1_A\|_{L^2}^2\leq \|P1_A\|_{L^2(X)}\sqrt{\mu(A)}.
\end{align*}
It remains to prove $(\ref{conseq1}).$ To this end, we use the von Neumann ergodic theorem, which establishes convergence in the $L^2$-norm. It follows that we also have weak convergence, and therefore, as $t\to\infty,$,
\begin{align*}
 \langle 1_A,\frac{1}{t}\int_0^t1_B(\phi_s)ds\rangle \rightarrow\langle 1_A,P1_B\rangle=\langle P1_A,P1_B\rangle,
\end{align*}
but
\begin{align*}
 \langle 1_A,\frac{1}{t}\int_0^t1_B(\phi_s)ds\rangle&=\frac{1}{t}\int_0^t\langle 1_A,1_B(\phi_s)\rangle ds\\
&=\frac{1}{t}\int_0^t\langle 1_A,1_{\phi_s^{-1}(B)}\rangle ds.
\end{align*}
Now it is clear that
\begin{align*}
\langle 1_A,1_{\phi_s^{-1}(B)}\rangle =\mu(A\cap\phi_s^{-1}B).
\end{align*}
That finishes the proof.
\end{proof}
We have the quantitative version of the Poincaré recurrence theorem 
\begin{prop}[Poincaré recurrence theorem]
\begin{equation}
\overline{\lim}_{t\to+\infty}\mu(A\cap\phi_t^{-1}A)\geq\mu(A)^2.\label{lim_sup_poinca_rec}
\end{equation}
Accordingly, if $\mu(A)>0,$ then $A\cap\phi_{t_k}A$ is non empty for a sequence $(t_k)$ converging to infinity with $k.$
\end{prop}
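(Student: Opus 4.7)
The plan is to deduce the quantitative recurrence inequality $(\ref{lim_sup_poinca_rec})$ directly from the previous proposition, which already provides the key ingredient: the Cesàro average of $s\mapsto\mu(A\cap\phi_s^{-1}A)$ converges to $\|P1_A\|_{L^2}^2$, and this limit is bounded below by $\mu(A)^2$. What remains is therefore an elementary fact relating the $\limsup$ of a bounded measurable function to the limit of its Cesàro mean.

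More precisely, set $f(s)=\mu(A\cap\phi_s^{-1}A)\in[0,1]$ and $g(t)=\frac{1}{t}\int_0^tf(s)ds$. First I would record the standard elementary inequality
\begin{equation*}
\limsup_{t\to\infty}g(t)\leq\limsup_{s\to\infty}f(s),
\end{equation*}
which is immediate from the fact that for any $\varepsilon>0$ we have $f(s)\leq\limsup f+\varepsilon$ for all $s$ large enough, so integrating and dividing by $t$ yields $g(t)\leq\limsup f+2\varepsilon$ for $t$ large. Applying this with $(\ref{conseq2})$ and the first half of $(\ref{conseq3})$ from the previous proposition gives
\begin{equation*}
\mu(A)^2\leq\|P1_A\|_{L^2}^2=\lim_{t\to\infty}g(t)\leq\limsup_{t\to\infty}f(t)=\overline{\lim}_{t\to+\infty}\mu(A\cap\phi_t^{-1}A),
\end{equation*}
which is exactly $(\ref{lim_sup_poinca_rec})$.

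For the second assertion, assume $\mu(A)>0$. Then the inequality just obtained yields $\overline{\lim}_{t\to\infty}\mu(A\cap\phi_t^{-1}A)\geq\mu(A)^2>0$, so there exists a sequence $t_k\to\infty$ along which $\mu(A\cap\phi_{t_k}^{-1}A)>0$; in particular these intersections are non-empty. Using the reversibility assumption recalled at the start of the section, so that $\phi_t^{-1}=\phi_{-t}$, one obtains the statement formulated with $\phi_{t_k}$ instead of $\phi_{t_k}^{-1}$ (up to replacing $t_k$ by $-t_k$, which still tends to infinity in absolute value; since the preceding proposition is symmetric in $t\mapsto -t$ for a group action, either sign works).

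I do not expect a real obstacle here: the whole argument is a soft consequence of the previous proposition, whose serious content (the von Neumann convergence and the identity $P^\ast P=P$) has already been used. The only mild point to verify is the elementary $\limsup$-versus-Cesàro inequality; everything else is bookkeeping.
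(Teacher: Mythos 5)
Your proposal is correct and follows essentially the same route as the paper: both reduce $(\ref{lim_sup_poinca_rec})$ to the elementary fact that the $\limsup$ of a bounded function dominates the limit of its Cesàro mean, combined with the lower bound $\mu(A)^2\leq\lim_t\frac{1}{t}\int_0^t\mu(A\cap\phi_s^{-1}A)\,ds$ from $(\ref{conseq2})$--$(\ref{conseq3})$. The only cosmetic difference is that the paper establishes the Cesàro comparison by splitting $\frac{1}{t^2}\int_0^{t^2}$ over $[0,t]$ and $[t,t^2]$ and bounding the tail by $\sup_{s\geq t}$, whereas you use the $\varepsilon$-definition of $\limsup$; your handling of the final recurrence assertion (which the paper leaves implicit) is also fine.
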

\begin{proof}
For $t>1,$ we write
\begin{align*}
\frac{1}{t^2}\int_0^{t^2}\mu(A\cap\phi_s^{-1}A)ds &\leq\frac{1}{t^2}\int_0^{t}\mu(A\cap\phi_s^{-1}A)ds+\frac{1}{t^2}\int_{t}^{t^2}\sup_{s\geq t}\mu(A\cap\phi_s^{-1}A)ds\\
&\leq\frac{1}{t}+\frac{t-1}{t}\sup_{s\geq t}\mu(A\cap\phi_s^{-1}A).
\end{align*}
Passing to the limit $t\to\infty$ and using the left-hand inequality in $(\ref{conseq3}),$ we obtain $(\ref{lim_sup_poinca_rec})$.
\end{proof}
\section{Almost conservation laws for KG}\label{sect3}
In the context of the FDL approach, there is some kind of "algebraic structure" that a functional in hand has to respect to be fruitful in the construction of an invariant measure. Indeed, one needs uniform in $\alpha$ controls in the passage to the limit from the stochastic model towards  the Hamiltonian PDE. In the estimation procedure, terms interacting with the damping (of order $\alpha$) are added with terms interacting with the forcing (of order $\alpha$ after taking the quadratic variation) and "order $1$" terms. To get the needed uniformity, the order $1$ terms must vanish under expectation w.r.t. an invariant measure. In the case of a conservation law, this requirement is satisfied because of the special "algebraic relations" that the latter shares with the equation. We call almost conservation law any (non preserved) functional that satisfies this requirement. Such a functional must depend on the damping model. Now we state a precise definition of our understanding of almost conservation law:\\
Consider a PDE
\begin{equation}\label{chp5_PDE_gen}
\dt u=f(u),
\end{equation}
and a functional $V(u)$, then, formally, we have for any solution $u$ that
\begin{equation*}
\dt V(u)=(\nabla_u V(u),f(u)).
\end{equation*}
We call the quantity $(\nabla_u V(u),f(u))$ by \textit{the evolution rate} of $V$ under the equation $(\ref{chp5_PDE_gen})$. It is clear that this term is zero iff $V$ is a conservation law for this equation. Now consider a linear perturbation of $(\ref{chp5_PDE_gen})$
\begin{equation}\label{chap5_PDE_gen_diss}
\dt u=f(u)+\alpha Lu,
\end{equation}
then
\begin{equation*}
\dt V(u)=(\nabla_u V(u),f(u))+\alpha(\nabla_uV(u),Lu).
\end{equation*}
In the case where $V$ is a conservation law for the equation, then
$\dt V(u)$ is of "order" $\alpha$, $i.e.$ $\dt V(u)$ is of the form $\alpha h(u)$ where $h$ does not depend on $\alpha.$
\\
A functional $V$ is called almost conservation law for $(\ref{chp5_PDE_gen})$ relatively to $(\ref{chap5_PDE_gen_diss})$ if 
\begin{itemize}
\item $V$ is not a conservation law,
\item for a solution $u$ to $(\ref{chap5_PDE_gen_diss})$, $\dt V$ remains of order $\alpha.$
\end{itemize}

\begin{rmq}
The evolution rate $\dt V$ for an almost conservation law $V$ must vanish when $\alpha=0,$ therefore $V$ has to be a perturbation of a conservation law.
\end{rmq}

In the present work, the damping scheme for $(\ref{KG})$ we consider is
\begin{equation}\label{chap_4_equ_san_eta}
\ds u-\Delta_0u+u^3=\alpha\Delta_0\dt u,\ \ \ \alpha\in (0,1).
\end{equation}
Let us introduce the following quantities:
\begin{align*}
G_1(y) &=E(y)+\frac{\alpha(m_0^2+\lambda_0)}{2}\int u\dt u+\frac{\alpha^2(m_0^2+\lambda_0)}{4}\|u\|_1^2,\\
G_2(y)&=E(y)-\frac{\alpha}{2}\int\dt u\Delta_0u+\frac{\alpha^2}{4}\|u\|_2^2.
\end{align*}
With use of $(\ref{chap5_embed_ineq})$ (for $m=1$, $s=0$) and $y$ is the vector $[u,\dt u]$. We remark that
\begin{align}
G_1(y) &\geq E(y)-\frac{\alpha^2(m_0^2+\lambda_0)^2}{4}\|u\|^2-\frac{1}{4}\|\dt u\|^2+\frac{\alpha^2(m_0^2+\lambda_0)}{4}\|u\|_1^2\geq E(y)-\frac{1}{4}\|\dt u\|^2\geq \frac{1}{4}E(y),\label{G_1pgE1}\\
G_2(y) &\geq E(y)-\frac{\alpha^2}{4}\|\Delta_0u\|^2-\frac{1}{4}\|\dt u\|^2+\frac{\alpha^2}{4}\|u\|_2^2=E(y)-\frac{1}{4}\|\dt u\|^2\geq \frac{1}{4}E(y).
\end{align}
Hence, in particular, the positivity of $G_1$ and $G_2.$

We have that the functionals $G_1$ and $G_2$ are almost conservation laws for $(\ref{KG})$ relatively to our dissipation scheme. The following controls (obtained in Proposition $\ref{prop_estimat_genral}$) express this fact:
\begin{align}
G_1(y_t)+\alpha\int_0^tL_1(y_s)ds &\leq G_1(y_0),\label{cont_almost}\\
 G_2(y_t)+\alpha\int_0^tL_2(y_s)ds &\leq G_2(y_0)+\alpha C\int_0^t\|u\|_{L^6}^6ds,
\end{align}
where $C$ is a constant independent of $\alpha,$ and we set
\begin{align*}
L_1(y)&=\frac{1}{2}\left\{(m_0^2+\lambda_0)\|u\|_1^2+2\|\dt u\|_1^2-(m_0^2+\lambda_0)\|\dt u\|^2+(m_0^2+\lambda_0)\|u\|_{L^4}^4\right\},\\
L_2(y)&=\frac{1}{2}\left(1^-\|u\|_2^2+\|\dt u\|_1^2\right),
\end{align*}
and $1^-=1-\epsilon$, with $\epsilon>0$ arbitrarily close to $0.$ \\ 
We give some useful estimates: 
\begin{prop} For all $[u,v]\in \mathcal{H}^{1,1},$ we have
\begin{align}
G_1(u,v) &\leq \frac{2+m_0^2+\lambda_0}{2\kappa^2}L_1(u,v), \label{E<F_1<E}\\
G_2(u,v) &\leq \frac{5E(u,v)+L_2(u,v)}{4},\label{ineq_G_2>half_E}
\end{align}
where $\kappa=\min(m_0^2+\lambda_0,1)$.
\end{prop}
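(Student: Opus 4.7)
The plan is to prove both bounds by direct expansion of $G_1$ and $G_2$, to control the cross terms by Young's inequality, and to invoke the embedding $(\ref{chap5_embed_ineq})$ (with $m=1$, $s=0$) in the form $\|w\|^2 \leq (m_0^2+\lambda_0)^{-1}\|w\|_1^2$. Writing $M := m_0^2+\lambda_0$ and $y=[u,v]$, each argument upper-bounds $G_i$ by a weighted sum of the Sobolev norms appearing in $L_i$ (and the $L^4$-norm), after which one compares coefficients.

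For $(\ref{E<F_1<E})$, I would expand $G_1 = E + \tfrac{\alpha M}{2}(u,v) + \tfrac{\alpha^2 M}{4}\|u\|_1^2$ and bound the cross term symmetrically by $\tfrac{\alpha M}{4}(\|u\|^2+\|v\|^2)$. Applying the embedding to each factor gives $\tfrac{\alpha M}{2}|(u,v)| \leq \tfrac{\alpha}{4}(\|u\|_1^2+\|v\|_1^2)$, and $\alpha,\alpha^2 \leq 1$ then turns the upper bound for $G_1$ into a linear combination of $\|u\|_1^2$, $\|v\|_1^2$ and $\|u\|_{L^4}^4$. On the other side, $\|v\|_1^2 \geq M\|v\|^2$ absorbs the negative $-M\|v\|^2$ term in $L_1$, giving the pointwise lower bound $L_1 \geq \tfrac{1}{2}(M\|u\|_1^2 + \|v\|_1^2 + M\|u\|_{L^4}^4)$. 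The inequality then follows by taking the worst ratio between the two expressions, treating the cases $M\geq 1$ (so $\kappa=1$) and $M<1$ (so $\kappa=M$) separately; the factor $\kappa^{-2}$ arises because the embedding is used twice, once per factor of $(u,v)$.

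For $(\ref{ineq_G_2>half_E})$, I would proceed in the same spirit, starting from $G_2 = E + \tfrac{\alpha}{2}(u,v)_1 + \tfrac{\alpha^2}{4}\|u\|_2^2$ after integration by parts (using $(u,v)_1 = ((-\Delta_0)u,v)$). Cauchy-Schwarz in $H^1$ yields $|(u,v)_1| \leq \|u\|_2\|v\|$, and a weighted Young's inequality with parameter $c>0$ gives $\tfrac{\alpha}{2}|(u,v)_1| \leq \tfrac{\alpha^2}{4c}\|u\|_2^2 + \tfrac{c}{4}\|v\|^2$. Adding this to $G_2$'s existing $\alpha^2\|u\|_2^2/4$, the total $\|u\|_2^2$-contribution is absorbed into the $(1-\epsilon)\|u\|_2^2/8$ piece of $L_2/4$ after an appropriate choice of $c$; the $\|v\|^2$-contribution (including $\tfrac{1}{2}\|v\|^2$ from $E$) is absorbed by the $\tfrac{5}{8}\|v\|^2$ piece of $5E/4$ together with $\tfrac{1}{8}\|v\|_1^2 \geq \tfrac{M}{8}\|v\|^2$ coming from $L_2/4$; and the remaining terms $\|u\|_1^2,\|u\|_{L^4}^4$ are dominated by $5E/4$.

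The main delicate point is matching the weights in Young's inequality to the exact coefficients in $L_1$ and $L_2$: in $(\ref{ineq_G_2>half_E})$, the slack $1^-=1-\epsilon$ in $L_2$ is precisely what creates room for the $\alpha^2\|u\|_2^2/4$ of $G_2$ plus the Young contribution to $\|u\|_2^2$, while the coefficient $5$ in front of $E$ accommodates the Young contribution to $\|v\|^2$ together with $E$'s own $\tfrac{1}{2}\|v\|^2$; in $(\ref{E<F_1<E})$ the quantity $(2+M)$ in the numerator is what remains after this bookkeeping once the $\kappa^{-2}$ loss from the double application of the embedding is accounted for.
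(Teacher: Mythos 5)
Your overall strategy --- expand $G_i$, control the cross term by Young's inequality, convert $L^2$-norms into higher Sobolev norms via $(\ref{chap5_embed_ineq})$, and compare coefficients against $L_i$ --- is exactly the natural route here, and the paper offers nothing more to compare against (it declares the proof ``straightforward'' and omits it). Your intermediate lower bound $L_1\geq\tfrac12\bigl(M\|u\|_1^2+\|v\|_1^2+M\|u\|_{L^4}^4\bigr)$, with $M:=m_0^2+\lambda_0$, is correct.

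The gap is in the final coefficient matching, which you assert (``taking the worst ratio'', ``the slack $1^-$ is precisely what creates room'') but do not carry out; when one does carry it out it does not close for $\alpha$ near $1$, and in fact the stated inequalities fail there, so no bookkeeping can rescue the argument without restricting $\alpha$ or enlarging the constants. For $(\ref{E<F_1<E})$: your upper bound gives the $\|u\|_1^2$-coefficient $\tfrac34+\tfrac M4$ on the left versus $\tfrac{(2+M)M}{4\kappa^2}$ on the right, and for $M=1$ (so $\kappa=1$) this requires $1\leq\tfrac34$. This is not an artifact of a lossy estimate: taking $u=v=ce_0$ with $c\to0$ and $M=1$ one finds $G_1\sim\bigl(1+\tfrac\alpha2+\tfrac{\alpha^2}4\bigr)c^2$ while $\tfrac{2+M}{2\kappa^2}L_1\sim\tfrac32c^2$, so the inequality is violated once $\alpha>\sqrt3-1$. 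For $(\ref{ineq_G_2>half_E})$ the failure is independent of $M$: with $v=0$ and $u=ce_N$, $N$ large and $c$ small, the $\|u\|_2^2$-coefficient is $\tfrac{\alpha^2}4$ on the left and only $\tfrac{1-\epsilon}8$ on the right (the contributions of $E$ are lower order in $\lambda_N$), so the inequality fails whenever $\alpha^2>\tfrac{1-\epsilon}2$; hence your claim that the slack $1^-$ accommodates $\tfrac{\alpha^2}4\|u\|_2^2$ plus the Young contribution is true only for $\alpha$ small. What your argument actually proves, and what you should state, is that both inequalities hold for all $[u,v]\in\mathcal{H}^{1,1}$ provided $\alpha$ is below an explicit threshold depending on $m_0^2+\lambda_0$ and $\epsilon$ (which suffices for the paper's purposes, since only the limit $\alpha\to0$ is used), or alternatively for all $\alpha\in(0,1)$ with larger numerical constants in place of $\tfrac{2+M}{2\kappa^2}$ and $\tfrac54,\tfrac14$.
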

The proof of the above proposition is straightforward.\\
Denoting by $\gamma_0$ the positive number $2\kappa^2/(2+m_0^2+\lambda_0)$, we infer from $(\ref{cont_almost})$ and $(\ref{E<F_1<E})$ that, for any solution $y_t\in \mathcal{H}^{1,0}$ to $(\ref{chap_4_equ_san_eta})$, we have
\begin{equation*}
G_1(y_t)\leq e^{-\gamma_0\alpha t}G_1(y_0).
\end{equation*}
Taking this inequality to the power $p>0,$ we get
\begin{equation*}
G_1^p(y_t)\leq e^{-p\gamma_0\alpha t}G_1^p(y_0).
\end{equation*}
Combining this with the embedding $H^1\subset L^6$, we obtain
\begin{equation*}
G_2(y_t)+\alpha\int_0^tL_2(y_s)ds \lleq G_2(y_0)+\alpha G_1(y_0)^3.
\end{equation*}

\begin{prop}
We have the inequalities
\begin{align}
L_1(u,v) &\geq \frac{\kappa}{2}\left(\|[u,v]\|_{1,1}^2+\|u\|_{L^4}^4\right), \label{chap4_L1_controle}\\
L_2(u,v) &\geq \frac{\delta}{2}\|[u,v]\|_{2,1}^2\ \ \ for\ any \ \delta<1.  \label{chap4_L2_controle}
\end{align}
\end{prop}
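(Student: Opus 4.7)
The plan is to reduce each inequality to elementary algebra after applying the embedding inequality~$(\ref{chap5_embed_ineq})$. Writing $y=[u,v]$ and $c:=m_0^2+\lambda_0$, recall that the only dangerous term in $2L_1(u,v)=c\|u\|_1^2+2\|v\|_1^2-c\|v\|^2+c\|u\|_{L^4}^4$ is the negative one $-c\|v\|^2$. The key step for~$(\ref{chap4_L1_controle})$ is therefore to absorb it into $\|v\|_1^2$ via~$(\ref{chap5_embed_ineq})$ with $m=1$, $s=0$, which gives $c\|v\|^2\le\|v\|_1^2$. After this substitution one obtains
\begin{equation*}
2L_1(u,v)\ \ge\ c\|u\|_1^2+\|v\|_1^2+c\|u\|_{L^4}^4,
\end{equation*}
and the definition $\kappa=\min(c,1)$ then trivially yields~$(\ref{chap4_L1_controle})$ after dividing by $2$.

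For $L_2$ the situation is even easier, since no term is negative: by definition
\begin{equation*}
2L_2(u,v)\ =\ (1-\epsilon)\|u\|_2^2+\|v\|_1^2,
\end{equation*}
so bounding the coefficient $1$ in front of $\|v\|_1^2$ from below by $1-\epsilon$ gives $2L_2(u,v)\ge(1-\epsilon)\|[u,v]\|_{2,1}^2$. Given an arbitrary $\delta<1$, one chooses $\epsilon\in(0,1-\delta]$ in the definition of $1^-$ and divides by $2$ to obtain~$(\ref{chap4_L2_controle})$.

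The only non-trivial observation in the argument is the use of the embedding~$(\ref{chap5_embed_ineq})$ to handle the $-c\|v\|^2$ term in~$L_1$; every other step is a direct comparison of coefficients. In particular, there is no compactness or nonlinear estimate to invoke — the $\|u\|_{L^4}^4$ contribution is simply carried along and appears with the positive factor $c\ge\kappa$. I therefore expect no genuine obstacle, and the whole proposition should be dispatched in a few lines.
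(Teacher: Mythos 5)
Your proposal is correct and follows essentially the same route as the paper: the paper's Lemma~\ref{neem} with $m=1$, $s=0$ is precisely the absorption of $-(m_0^2+\lambda_0)\|v\|^2$ into $2\|v\|_1^2$ via the embedding~$(\ref{chap5_embed_ineq})$ that you carry out directly, and the paper likewise dismisses~$(\ref{chap4_L2_controle})$ as straightforward.
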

\begin{proof}
The bound $(\ref{chap4_L2_controle})$ is straightforward and  $(\ref{chap4_L1_controle})$ is obained with use of the lemma $\ref{neem}$ below where we take $m=1,\ s=0$.
\end{proof}
\begin{nem}\label{neem}
For $w\in H^m$ with $m\in\R,$ we have for any $s\leq m$
\begin{equation}
\|v\|_m^2-\frac{(m_0^2+\lambda_0)^{m-s}}{2}\|v\|_{s}^2\geq \frac{1}{2}\|v\|_m^2.\label{chap_4_inegetoil}
\end{equation}
\end{nem}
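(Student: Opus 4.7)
The plan is to observe that this lemma is essentially a one-line consequence of the embedding inequality (\ref{chap5_embed_ineq}) stated in the introduction, rearranged so that half of the top-order norm is absorbed.

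First I would recall (\ref{chap5_embed_ineq}), which says that for any $m\geq s$ in $\mathbb{R}$ and any $v\in H^m$,
\begin{equation*}
\|v\|_m^2 \geq (m_0^2+\lambda_0)^{m-s}\|v\|_s^2.
\end{equation*}
This is proved by expanding $v=\sum_j v_j e_j$, noting that the map $j\mapsto (m_0^2+\lambda_j)^{m-s}$ is non-decreasing in $j$ under the assumption $m_0^2>-\lambda_0$ (since $\lambda_j$ is non-decreasing and $m_0^2+\lambda_0>0$), and bounding term by term. I would record this as the only nontrivial input.

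From the embedding inequality, dividing both sides by $2$ we get
\begin{equation*}
\frac{(m_0^2+\lambda_0)^{m-s}}{2}\|v\|_s^2 \leq \frac{1}{2}\|v\|_m^2.
\end{equation*}
Subtracting this from $\|v\|_m^2$ immediately yields
\begin{equation*}
\|v\|_m^2 - \frac{(m_0^2+\lambda_0)^{m-s}}{2}\|v\|_s^2 \geq \|v\|_m^2 - \frac{1}{2}\|v\|_m^2 = \frac{1}{2}\|v\|_m^2,
\end{equation*}
which is exactly (\ref{chap_4_inegetoil}).

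There is no real obstacle here; the statement is really just (\ref{chap5_embed_ineq}) with a factor $1/2$ kept on the low-norm side so that the remaining half of $\|v\|_m^2$ can be used as a coercive dissipation term in the arguments for (\ref{chap4_L1_controle}) and (\ref{chap4_L2_controle}). The only point worth stating explicitly is that the hypothesis $m_0^2>-\lambda_0$ is what guarantees $(m_0^2+\lambda_j)^{m-s}$ is well-defined and monotone in $j$; otherwise the embedding inequality would fail.
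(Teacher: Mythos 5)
Your proof is correct and is essentially the same argument as the paper's: the paper splits $\|v\|_m^2$ into two halves and applies the embedding inequality (\ref{chap5_embed_ineq}) to one of them, which is just an algebraic rearrangement of your subtraction step. (Incidentally, your version avoids the small typo $\frac{1}{2}\|v\|_1^2$ that appears in the paper's displayed inequality, which should read $\frac{1}{2}\|v\|_m^2$.)
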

\begin{proof}
Using $(\ref{chap5_embed_ineq})$, we have 
\begin{equation*}
\|v\|_m^2=\frac{1}{2}\|v\|_m^2+\frac{1}{2}\|v\|_m^2\geq \frac{1}{2}\|v\|_1^2+\frac{(m_0^2+\lambda_0)^{m-s}}{2}\|v\|_s^2.
\end{equation*}
That finishes the proof.
\end{proof}

\section{Global wellposedness for the damped KG}
In this section we consider the following equation
\begin{equation}\label{nonlinear_flow}
\ds v-\Delta_0 v+(v+f)^3=\alpha\Delta_0\dt v,
\end{equation}
where $f$ satisfies
$$\sup_{t\in[0,T]}\sup_{x\in K}|f(t,x)|<\infty\ \ \ \forall T>0, \ \ \ K=D\ or \ \T^3.$$
\subsection{A-priori analysis}
\textbf{Choice of the spaces.} Let us, first, consider the damped linear equation 
\begin{equation}\label{chap4_damped_lin_KG}
\ds v-\Delta_0 v=\alpha\Delta_0\dt v.
\end{equation}
 Both on the periodic or the bounded domain setting, the non-damped equation $(\alpha=0)$ preserves the following quantities:
\begin{equation*}
M_m(r,s)=\frac{1}{2}\|[r,s]\|_{m,m-1}^2 \ \ m=1,2.
\end{equation*}
Now, let us introduce the following "perturbed" versions:
\begin{align*}
N_1(r,s) &=\frac{1}{2}\|[r,s]\|_{1,0}^2+\frac{\alpha(m_0^2+\lambda_0)}{2}\int rs+\frac{\alpha^2(m_0^2+\lambda_0)}{4}\|r\|_1^2,\\
N_2(r,s) &=\frac{1}{2}\|[r,s]\|_{2,1}^2-\frac{\alpha(m_0^2+\lambda_0)}{2}\int s\Delta_0r+\frac{\alpha^2(m_0^2+\lambda_0)}{4}\|r\|_2^2.
\end{align*}
By a standard procedure, we see that a solution $[v,\dt v]$ of the damped linear equation $(\ref{chap4_damped_lin_KG})$ satisfies the following dissipation estimates:
\begin{align*}
N_1(v,\dt v)+ &\frac{\alpha}{2}\int_0^t\left\{(m_0^2+\lambda_0)\|v\|_1^2+2\|\dt v\|_1^2-(m_0^2+\lambda_0)\|\dt v\|^2\right\}ds=N_1(v(0),\dt v(0)),\\
N_2(v,\dt v)+ &\frac{\alpha}{2}\int_0^t\left\{(m_0^2+\lambda_0)\|v\|_2^2+2\|\dt v\|_2^2-(m_0^2+\lambda_0)\|\dt v\|_1^2\right\}ds=N_2(v(0),\dt v(0)).
\end{align*}
Then we use the inequality $(\ref{chap_4_inegetoil})$ to infer the following controls:
\begin{align*}
N_m(v,\dt v) &+\frac{\alpha\kappa}{2}\int_0^t\|[v,\dt v]\|_{m,m}^2ds\leq N_m(v(0),\dt v(0)) \ \ \ \ m=1,2.
\end{align*}
In view of these estimates, the natural spaces for studying wellposedness of $(\ref{ddKG})$ are 
\begin{equation*}
Z_{m}^T=C([0,T),\mathcal{H}^{m,m-1})\cap L^2_{loc}([0,T),\mathcal{H}^{m,m})\ \ \ for \ \ T\in (0,+\infty],
\end{equation*}
endowed with the norm defined by
\begin{equation*}
\|[u,v]\|_{Z_m^T}=\sup_{t\in[0,T)}\left(\|[u,v]\|_{m,m-1}^2+\alpha\kappa\int_0^t\|[u,v]\|_{m,m}^2ds\right)^{\frac{1}{2}}.
\end{equation*}

\paragraph{Definitions.}\label{intro_section_convol_stoch_et_Ito_lemma}
\begin{defi}\label{intro_SGWP}
 The equation $(\ref{ddKG})$ is said to be stochastically (globally) well-posed in $\mathcal{H}^{m,m-1}$ if for all $T>0$ 
\begin{enumerate}
\item  for any random variable $u_0$ in $\mathcal{H}^{m,m-1}$ which is independent of $\mathcal{F}_t,$ we have, for almost all $\omega\in\Omega$,
\begin{enumerate}
\item (Existence) there exists $u:=u^\omega \in \Lambda_T:=C(0,T;\mathcal{H}^{m,m-1})\cap L^2(0,T;\mathcal{H}^{m,m})$ satisfying the following relation in $\mathcal{H}^{m,m-2}:$
\begin{equation}
[u,\dt u]=[u_0,\dt u|_0]+\int_0^t[\dt u, \Delta_0u-u^3+\alpha\Delta_0\dt u]ds+[0,1]\zeta(t)\ \ \text{for all $t\in[0,T]$},\label{intro_sol_mild}
\end{equation}
we denote this solution by $y(t,u_0):=y^\omega(t,y_0),$ where $y_0$ is the initial vector data.
\item (Uniqueness) if $y_1,y_2\in \Lambda_T$ are two solutions in the sense of $(\ref{intro_sol_mild}),$ then $y_1\equiv y_2$ on $[0,T],$ 
\end{enumerate}
\item (Continuity w.r.t. initial data) for almost all $\omega,$ we have
\begin{equation}
\lim_{y_{0}\to u_{0}'}y(.,u_0)=y(.,u_0') \ \ \ \text{in $\Lambda_T$},
\end{equation}
here $y_0$ and $y_0'$ are deterministic data;
\item the process $(\omega,t)\mapsto y^\omega(t)$ is adapted to the filtration $\sigma(y_0,\mathcal{F}_t)$.
\end{enumerate}
\end{defi}
\
Let $V\subset H\subset V^*$ be three separable Hilbert spaces, with densely embeddings and where $V^*$ is the dual of $V$ w.r.t. $H$. Then $(V,H,V^*)$ is called a Gelfand triple.
\begin{defi}\label{intro_WS}
We say that the equation $(\ref{ddKG})$ has the Ito property on the Gelfand triple $(\mathcal{H}^{m,m-2},\mathcal{H}^{m,m-1},\mathcal{H}^{m,m})$ if
\begin{enumerate}
\item it is stochastically wellposed on $H$;
\item  
the process $h:=[\dt u,\Delta_0u-u^3+\alpha\Delta_0\dt u]$ is $\mathcal{F}_t$-adapted and
\begin{align}
\P\left(\int_0^t(\|y_s\|_{m,m}^2+\|h_s\|_{m,m-2}^2)ds<\infty,\ \ \forall t>0\right)=1,\ \
\|\zeta(t)\|_{m,m-1}<\infty.\label{intro_hyp_KS}
\end{align}
\end{enumerate}
\end{defi}
To such an Ito process we can apply an Ito formula proved in  Section $A.7$ (Theorem $A.7.5$ and Corollary $A.7.6$) of \cite{KS12}.

\paragraph{A-priori estimates for the nonlinear equation $(\ref{nonlinear_flow})$.} 

\begin{prop}\label{prop_estimat_genral} Set $\gamma_1=1+\alpha\frac{m_0^2+\lambda_0}{2}.$
For any solution $q_t=[v_t,\dt{v}_t]$ to $(\ref{nonlinear_flow})$ starting at $q_0=[v_0,\dt{v}_0]$ with $G_1(q_0)<\infty$ and $G_2(q_0)<\infty$, we have 
\begin{align}\label{ap_est_Energy}
G_1(q_t)+\alpha\int_0^tL_1(q_s)ds\leq e^{\gamma_1\int_0^tR(f)ds}\left(G_1(q_0)+\gamma_1\int_0^t\|f\|_{L^6}^4ds\right),
\end{align}
 \begin{equation}\label{ap_est_G_2}
 G_2(q_t)+\alpha\int_0^tL_2(q_s)ds\leq e^{\int_0^tR(f)ds}\left(G_2(q_0)+\frac{1}{4}\int_0^t(2\|f\|_{L^6}^4+\alpha C\|v+f\|_{L^6}^6)ds\right),
 \end{equation}
 where $R(f)=2(24(\|f\|_{L^\infty}+\|f\|_{L^\infty}^2)+\|f\|_{L^6}^2)$ and $C$ is universal.
\end{prop}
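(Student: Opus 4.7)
The plan is to differentiate $G_1(q_t)$ and $G_2(q_t)$ along the flow of $(\ref{nonlinear_flow})$, identify the structural cancellations that make these functionals ``almost conserved'' in the sense of Section \ref{sect3}, and close via Gr\"onwall. Since $(\ref{nonlinear_flow})$ is deterministic, no It\^o correction appears and the argument reduces to the chain rule: we compute in turn $\frac{d}{dt}E$, $\frac{d}{dt}\int v\,\dt v$, $\frac{d}{dt}\|v\|_1^2$, $\frac{d}{dt}\int\dt v\,\Delta_0 v$ and $\frac{d}{dt}\|v\|_2^2$, each time substituting $\ds v$ from the equation and integrating by parts.

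For $G_1$, the coefficients $\alpha(m_0^2+\lambda_0)/2$ and $\alpha^2(m_0^2+\lambda_0)/4$ multiplying $\int v\,\dt v$ and $\|v\|_1^2$ are tuned precisely so that the $\alpha^2(v,\dt v)_1$ cross terms cancel. After regrouping, the non-$f$ contributions sum to exactly $-\alpha L_1(q)$, while the nonlinearity $(v+f)^3-v^3=3v^2f+3vf^2+f^3$ produces a remainder of the form $\int(3v^2f+3vf^2+f^3)\dt v+\frac{\alpha(m_0^2+\lambda_0)}{2}\int v\,(3v^2f+3vf^2+f^3)$.

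For $G_2$, the same bookkeeping produces $-\frac{\alpha}{2}(\|v\|_2^2+\|\dt v\|_1^2)$ together with an obstructive cubic term $\frac{\alpha}{2}\int(v+f)^3\Delta_0 v$ that does \emph{not} vanish when $f=0$. This is handled by Cauchy--Schwarz and Young's inequality: $\frac{\alpha}{2}\|v+f\|_{L^6}^3\|v\|_2\le\frac{\alpha\varepsilon}{2}\|v\|_2^2+\alpha C_\varepsilon\|v+f\|_{L^6}^6$. The $\varepsilon$-piece is absorbed into $\frac{\alpha}{2}\|v\|_2^2$, yielding exactly the $1^-$ factor appearing in the definition of $L_2$, while the residual $\|v+f\|_{L^6}^6$ becomes the explicit forcing in $(\ref{ap_est_G_2})$.

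The remaining $f$-dependent terms of the shape $\int v^{3-k}f^k\dt v$ and $\int v^{4-k}f^k$ ($k=1,2,3$) are bounded by H\"older followed by Young so that each $v$-factor is dominated by $E\lesssim G_i$ via $(\ref{G_1pgE1})$, and each $f$-factor contributes one of $\|f\|_\infty$, $\|f\|_\infty^2$ or $\|f\|_{L^6}^2$; the pure cubic $\int f^3\dt v$ leaves an irreducible $\|f\|_{L^6}^4$ residual. Collecting everything yields a differential inequality of the form $\frac{d}{dt}G_i+\alpha L_i\leq R(f)\,G_i+(\text{explicit $f$-forcing})$, which integrates by Gr\"onwall to $(\ref{ap_est_Energy})$ and $(\ref{ap_est_G_2})$. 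The main obstacle is purely combinatorial: calibrating the prefactors inside $G_1,G_2$ so the $\alpha$-bad terms cancel, and choosing each Young parameter so the extracted pieces either land inside $-\alpha L_i$ or match the prescribed form of the forcing; no PDE argument beyond Sobolev embedding and integration by parts is required.
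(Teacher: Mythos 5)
Your proposal follows essentially the same route as the paper's proof: split $G_i$ into $E$ plus the $\alpha$-correction, exploit the cancellation of the $\alpha^2(v,\dt v)_1$ cross terms so the non-$f$ part collapses to $-\alpha L_i$, absorb the cubic obstruction $\frac{\alpha}{2}(\Delta_0 v,(v+f)^3)$ by Young (producing the $1^-$ in $L_2$ and the $\|v+f\|_{L^6}^6$ forcing), bound the remaining $f$-terms by H\"older--Young against $E\lleq G_i$, and close with Gr\"onwall. The argument is correct and matches the paper's in structure and in every essential estimate.
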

\begin{proof}
Rewrite $(\ref{nonlinear_flow})$ into
\begin{align*}
\ds v-\Delta_0 v+v^3=\alpha\Delta_0\dt v-3v^2f-3vf^2-f^3,
\end{align*}
and $G_1(q)$ as $E(q)+I_\alpha(q).$
Since $E(q)$ is preserved by KG, we have
\begin{align*}
\dt E(q) &=-(\dt v,3v^2f+3vf^2+f^3)+\alpha(\dt v,\Delta_0\dt v)\\
&=-3(f\dt v,vf+v^2)-(\dt v,f^3)-\alpha\|\dt v\|_1^2\\
&\leq 3(\|f\|_{L^\infty}+\|f\|_{L^\infty}^2)\|\dt v\|(\|v\|+\|v\|_{L^4}^2)+\|\dt v\|\|f\|_{L^6}^3-\alpha\|\dt v\|_1^2\\
&\leq 3(\|f\|_{L^\infty}+\|f\|_{L^\infty}^2)\left(\frac{1}{2}\|\dt v\|^2+2\|v\|^2+2\|v\|_{L^4}^4\right)+\frac{1}{2}\|\dt v\|^2\|f\|_{L^6}^2+\frac{1}{2}\|f\|_{L^6}^4\\
&-\alpha\|\dt v\|_1^2\\
&\leq (24(\|f\|_{L^\infty}+\|f\|_{L^\infty}^2)+\|f\|_{L^6}^2)E(q)+\frac{1}{2}\|f\|_{L^6}^4-\alpha\|\dt v\|_1^2.
\end{align*}
Now
\begin{align*}
\dt I_\alpha(q) &= \frac{\alpha(m_0^2+\lambda_0)}{2}(\dt v,\dt v)+\frac{\alpha^2(m_0^2+\lambda_0)}{4}\dt\|v\|_1^2\\
&+\frac{\alpha(m_0^2+\lambda_0)}{2}(v,\Delta_0v-(v+f)^3+\alpha\Delta_0\dt v))\\
&=\frac{\alpha(m_0^2+\lambda_0)}{2}\|\dt v\|^2+\frac{\alpha^2(m_0^2+\lambda_0)}{4}\dt\|v\|_1^2-\frac{\alpha(m_0^2+\lambda_0)}{2}(\|v\|_1^2+\|v\|_{L^4}^4)\\
&-\frac{\alpha^2(m_0^2+\lambda_0)}{4}\dt\|v\|_1^2\\ &+\underbrace{\frac{\alpha(m_0^2+\lambda_0)}{2}(vf,3v^2+3vf+f^2)}_{A}.
\end{align*}
Let us notice that
\begin{align*}
A &\leq\frac{\alpha(m_0^2+\lambda_0)}{2}\left\{(24(\|f\|_{L^\infty}+\|f\|_{L^\infty}^2)+\|f\|_{L^6}^2)E(q)+\frac{1}{2}\|f\|_{L^6}^4\right\}.
\end{align*}
Finally, we obtain
\begin{align*}
\dt G_1(q)=\dt E(q)+\dt I_\alpha(q)\leq -\alpha L_1(q)+\left(1+\alpha\frac{m_0^2+\lambda_0}{2}\right)\left\{(24\|f\|_{L^\infty}+\|f\|_{L^6}^2)G_1(q)+\|f\|_{L^6}^4\right\}.
\end{align*}
Applying Gronwall lemma we obtain $(\ref{ap_est_Energy}).$\\ To prove $(\ref{ap_est_G_2})$, let us compute
\begin{align*}
\dt G_2(v)&=(\dt v,\alpha\Delta_0\dt v -(3v^2f+3vf^2+f^3))-\frac{\alpha}{2}(\Delta_0 v, \Delta_0v-(v+f)^3))+\frac{\alpha}{2}\|\dt v\|_1^2\\
&\underbrace{+\frac{\alpha^2}{4}\dt\|u\|_2^2-\frac{\alpha}{2}(\Delta_0 u,\alpha\Delta_0\dt u)}_{=0}\\
&=(\dt v,-(3v^2f+3vf^2+f^3)-\frac{\alpha}{2}(\Delta_0 v, \Delta_0 v-(v+f)^3)-\frac{\alpha}{2}\|\dt v\|_1^2\\
&= I + II+III.
\end{align*}
By the first part of the proof, we have
\begin{align*}
I &\leq (24(\|f\|_{L^\infty}+\|f\|_{L^\infty}^2)+\|f\|_{L^6}^2)E(q)+\frac{1}{2}\|f\|_{L^6}^4,
\end{align*}
combining that with $(\ref{ineq_G_2>half_E}),$ we get
\begin{align*}
I\leq 2(24(\|f\|_{L^\infty}+\|f\|_{L^\infty}^2)+\|f\|_{L^6}^2)G_2(q)+\frac{1}{2}\|f\|_{L^6}^4=:R(f)G_2(q)++\frac{1}{2}\|f\|_{L^6}^4
\end{align*}
Now for any $\epsilon>0$, we have
\begin{align*}
II &\leq-\frac{\alpha}{2}\|v\|_2^2+\frac{\alpha}{2}\|v\|_2\|v+f\|_{L^6}^3\\
&\leq -\frac{\alpha}{2}\|v\|_2^2+\alpha\frac{\epsilon}{2}\|v\|_2^2+\frac{\alpha}{8\epsilon}\|v+f\|_{L^6}^6.
\end{align*}
Combining all this we obtain, for any $\epsilon>0,$ 
\begin{align*}
\dt G_2(q_t)+\frac{\alpha}{2}\left(\|\dt v\|_1^2+(1-\epsilon)\|v\|_2^2\right)&\leq G_2(q_t)R(f)+\frac{\alpha}{8\epsilon}\|v+f\|_{L^6}^6+\frac{1}{2}\|f\|_{L^6}^4,
\end{align*}
it remains to apply Gronwall lemma to arrive at the claim.
\end{proof} 
The following result will be used in the proof of Proposition $\ref{chap5:cond_kS}.$ 
\begin{prop}
For any $T>0$, any $\epsilon>0,$ we have the a-priori estimate 
\begin{align}\label{chap5_est_G1_moyenn}
E(q_T)+\alpha\int_0^T\|\dt v\|_{1}^2ds\leq e^{\frac{(9T+\epsilon)t}{\epsilon}+\frac{\epsilon}{T}\int_0^T\|f\|_{L^\infty}^2ds}\left(E(q_0)+\frac{1}{2}\int_0^T\left[\|f\|_{L^6}^6+\frac{T}{\epsilon}\|f\|_{L^4}^4\right]ds\right).
\end{align}
\end{prop}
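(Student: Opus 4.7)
I would start from the energy identity for $(\ref{nonlinear_flow})$ obtained by testing against $\dt v$ in $L^2(K)$. Expanding $(v+f)^3=v^3+3v^2f+3vf^2+f^3$ and noting that $(v^3,\dt v)=\tfrac{1}{4}\dt\|v\|_{L^4}^4$ joins the linear terms to produce $\dt E(q_t)$, this yields
\begin{equation*}
\dt E(q_t)+\alpha\|\dt v\|_1^2=-\bigl(3v^2f+3vf^2+f^3,\dt v\bigr).
\end{equation*}
The plan is then to bound the right-hand side pointwise by $a(t)E(q_t)+b(t)$, with $a$ and $b$ chosen so that -- interpreting the $t$ in the exponent of the claim as $T$ -- $\int_0^T a(s)ds$ and $\int_0^T b(s)ds$ reproduce respectively the exponent and the parenthesized integral in the announced bound; concretely, $a(t)=\tfrac{9T+\epsilon}{\epsilon}+\tfrac{\epsilon}{T}\|f\|_{L^\infty}^2$ and $b(t)=\tfrac12\|f\|_{L^6}^6+\tfrac{T}{2\epsilon}\|f\|_{L^4}^4$.

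The three cross terms would be handled by Hölder together with weighted Young inequalities whose weights are tuned to the dimensionless parameter $T/\epsilon$, the idea being that every $\|f\|_{L^\infty}^2$ factor should come with the small prefactor $\epsilon/T$, while every pure $L^p$ norm of $f$ comes with the large prefactor $T/\epsilon$. The decisive calibration is on the $f^3$ term: I would rewrite $|f^3\dt v|=f^2\cdot|f\dt v|$ and apply Young with weight $T/\epsilon$ to obtain $\tfrac{T}{2\epsilon}f^4+\tfrac{\epsilon}{2T}f^2\dt v^2$; integrating, and using $f^2\dt v^2\leq\|f\|_{L^\infty}^2\dt v^2$ together with $\|\dt v\|^2\leq 2E(q_t)$, this gives precisely the $\tfrac{T}{2\epsilon}\|f\|_{L^4}^4$ summand of $b(t)$ and the $\tfrac{\epsilon}{T}\|f\|_{L^\infty}^2 E(q_t)$ summand of $a(t)E(q_t)$. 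The $\|f\|_{L^6}^6$ piece of $b(t)$ should come from a Hölder/Young split of the $(v^2f,\dt v)$ term via $\|v^2f\|\leq\|v\|_{L^6}^2\|f\|_{L^6}$ together with $\|v\|_{L^6}\lleq\|v\|_1$. The remaining $(vf^2,\dt v)$ term, bounded by $\|f\|_{L^\infty}^2\|v\|\|\dt v\|$, contributes a constant multiple of $\|f\|_{L^\infty}^2 E(q_t)$ that can be absorbed into $a(t)E(q_t)$, and the aggregate of all the constant-coefficient pieces should match $\tfrac{9T+\epsilon}{\epsilon}E(q_t)$.

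With the pointwise inequality in hand, the conclusion follows by the standard dissipative Gronwall argument: multiplying by $e^{-\int_0^t a(s)ds}$, integrating on $[0,T]$, and using $e^{\int_s^T a(r)dr}\geq 1$ to drop the weight on the dissipation integral on the left yields
\begin{equation*}
E(q_T)+\alpha\int_0^T\|\dt v\|_1^2\,ds\leq e^{\int_0^T a(s)ds}\!\left(E(q_0)+\int_0^T b(s)\,ds\right),
\end{equation*}
which is the advertised estimate. The main obstacle I anticipate is the arithmetic bookkeeping: nailing down exactly the constant $9$ in the exponent will require carefully allocating the weighted Young splits across the two remaining cross terms. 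However, the inequality is far from sharp, so any slack should be absorbable into the $+\epsilon$ summand of $9T+\epsilon$, and nothing subtle is expected to go wrong.
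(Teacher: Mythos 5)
Your skeleton (energy identity, weighted Young splits calibrated by $T/\epsilon$, then Gronwall) is exactly the paper's, and your treatment of the $f^3$ term and of the final Gronwall step are fine. But two of your three term-by-term estimates would not close. First, for $(v^2f,\dt v)$ you propose $\|v^2f\|\leq\|v\|_{L^6}^2\|f\|_{L^6}$ with $\|v\|_{L^6}\lleq\|v\|_1$ and a Young split producing $\|f\|_{L^6}^6$; but to isolate $\|f\|_{L^6}^6$ from a single power of $\|f\|_{L^6}$ the conjugate factor is $(\|v\|_{L^6}^2\|\dt v\|)^{6/5}\lleq E^{9/5}$, which is superlinear in $E$ and destroys the Gronwall argument (any other split of $\|v\|_{L^6}^2\|f\|_{L^6}\|\dt v\|$ has the same defect, since $\|v\|_{L^6}^2\lleq E$ costs a full power of $E$). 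The paper instead keeps $f$ paired with $\dt v$ and uses the \emph{quartic} part of the energy: $3|(f\dt v,v^2)|\leq\frac{3\epsilon}{2T}\|f\|_{L^\infty}^2\|\dt v\|^2+\frac{3T}{2\epsilon}\|v\|_{L^4}^4$, with $\|v\|_{L^4}^4\leq 4E$; this is where the $O(T/\epsilon)$ constant in the exponent comes from, and the $\|f\|_{L^6}^6$ in the statement is generated by the $f^3$ term via $|(\dt v,f^3)|\leq\frac12\|\dt v\|^2+\frac12\|f\|_{L^6}^6$, not by the $v^2f$ term.

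Second, your bound $|(vf^2,\dt v)|\leq\|f\|_{L^\infty}^2\|v\|\,\|\dt v\|\lleq\|f\|_{L^\infty}^2E$ carries an $O(1)$ coefficient in front of $\|f\|_{L^\infty}^2E$, which cannot be absorbed into the claimed exponent, whose $\|f\|_{L^\infty}^2$-dependence is only $\frac{\epsilon}{T}\int_0^T\|f\|_{L^\infty}^2$ (the constant slack $9T+\epsilon$ cannot swallow a non-constant coefficient). Here again the weighted split must be arranged so that the $\|f\|_{L^\infty}^2$ factor lands on the \emph{small} weight: $3|(f\dt v,vf)|\leq\frac{3\epsilon}{2T}\|f\|_{L^\infty}^2\|\dt v\|^2+\frac{3T}{2\epsilon}\|vf\|^2$ with $\|vf\|^2\leq\frac12(\|v\|_{L^4}^4+\|f\|_{L^4}^4)$ — which is, incidentally, where the $\frac{T}{\epsilon}\|f\|_{L^4}^4$ term in the statement actually originates. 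With these two corrections your plan reproduces the paper's proof (up to the paper's own loose constants, repaired there by rescaling $\epsilon$).
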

\begin{proof}
We have that
\begin{align*}
\dt E(q) &=-(\dt v,3v^2f+3vf^2+f^3)+\alpha(\dt v,\Delta_0\dt v)\\
&=-3(f\dt v,vf+v^2)-(\dt v,f^3)-\alpha\|\dt v\|_1^2.
\end{align*}
Then
\begin{align*}
\dt E(q_t)+\alpha\|\dt v\|_1^2 &\leq \frac{3}{2}(\frac{\epsilon}{T}\|f\dt v\|^2+\frac{T}{\epsilon}\|vf+v^2\|^2)+\frac{1}{2}\|\dt v\|^2+\frac{1}{2}\|f\|_{L^6}^6.
\end{align*}
Notice that
\begin{equation*}
\frac{T}{\epsilon}\|vf+v^2\|^2)\leq \frac{2T}{\epsilon}(\|vf\|^2+\|v\|_{L^4}^4)\leq \frac{T}{\epsilon}(3\|v\|_{L^4}^4+\|f\|_{L^4}^4).
\end{equation*}
And then
\begin{align*}
\dt E(q_t)+\alpha\|\dt v\|_1^2 &\leq \frac{3\epsilon}{T}\|f\|_{L^\infty}^2E(q)+\frac{6T}{\epsilon}E(q)+E(q)+\frac{1}{2}\|f\|_{L^6}^6+\frac{T}{2\epsilon}\|f\|_{L^4}^4.
\end{align*}
One can change $\epsilon$ into $2\epsilon/3$ and use the Gronwall inequality to find, for all $t\geq 0,$
\begin{equation*}
E(q_t)+\alpha\int_0^t\|\dt v\|_{1}^2ds\leq e^{\frac{(9T+\epsilon)t}{\epsilon}+\frac{\epsilon}{T}\int_0^t\|f\|_{L^\infty}^2ds}\left(E(q_0)+\frac{1}{2}\int_0^t\left[\|f\|_{L^6}^6+\frac{T}{\epsilon}\|f\|_{L^4}^4\right]ds\right).
\end{equation*}
Then take $t=T$ to get the result.
\end{proof}
\subsection{Existence and uniqueness for the nonlinear equation $(\ref{nonlinear_flow})$}

Setting $w=\dt v$, we rewrite $(\ref{nonlinear_flow})$ into
\begin{equation}\label{linear_flow_mat}
\dt\underbrace{\left(\begin{array}{l r c}
v \\ w
\end{array}\right)}_{y}=\underbrace{\left(\begin{array}{l r c}
&0 &1\\
&\Delta_0 &\alpha\Delta_0
\end{array}\right)}_{A}\left(\begin{array}{l r c}
v \\ w
\end{array}\right)-\underbrace{\left(\begin{array}{l r c}
0 \\ (v+f)^3
\end{array}\right)}_{B(v+f)}.
\end{equation}
We denote by $S(t)$ the semi-group generated by $A$ thanks to the Hille-Yosida theorem.
\begin{prop}\label{ExistUniqnonlin} For any $q_0=[v_0,\dt{v}_0]\in\mathcal{H}^{1,0},$ for any $T>0,$ there is a unique $q(t,q_0)\in Z_1^T$ satisfying $(\ref{nonlinear_flow})$ with the the condition $q(0,q_0)=q_0.$ Moreover, the map $q_0\mapsto q(.,q_0)$ is continuous with respect to the underlying norms.
\end{prop}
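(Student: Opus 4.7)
The plan is a classical fixed-point argument on the mild formulation, closed up by the a priori bounds of Proposition \ref{prop_estimat_genral} to promote local solutions to global ones.

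\emph{Local existence and uniqueness.} Rewriting $(\ref{linear_flow_mat})$ in Duhamel form, we seek $q \in C([0,T_0], \mathcal{H}^{1,0})$ with
$$q(t) = S(t) q_0 - \int_0^t S(t-s)\, B(v(s)+f(s))\, ds.$$
The semigroup $S(t)$ is bounded on $\mathcal{H}^{1,0}$ by Hille--Yosida, and the three-dimensional Sobolev embedding $H^1 \hookrightarrow L^6$ together with the factorization $a^3 - b^3 = (a-b)(a^2+ab+b^2)$ give
$$\|(v_1+f)^3 - (v_2+f)^3\|_0 \lesssim \bigl(\|v_1\|_1^2 + \|v_2\|_1^2 + \|f\|_{L^\infty}^2\bigr)\, \|v_1 - v_2\|_1.$$
Consequently the map $\Phi$ defined by the right-hand side of the Duhamel formula is a contraction on a ball of $C([0,T_0], \mathcal{H}^{1,0})$ for $T_0$ small enough depending only on $\|q_0\|_{1,0}$ and $\sup_{[0,T]\times K}|f|$, giving a unique local mild solution.

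\emph{Global extension and $Z_1^T$ regularity.} Apply $(\ref{ap_est_Energy})$ to the local solution: under the standing hypothesis on $f$ its right-hand side is finite on $[0,T]$, and by $(\ref{G_1pgE1})$ the quantity $G_1(q_t)$ dominates $\tfrac{1}{4} E(q_t)$, hence controls $\|q_t\|_{1,0}^2$. This uniform a priori bound precludes blow-up, so the local solution extends to the full interval by the standard continuation alternative. The remaining $L^2_{loc}([0,T), \mathcal{H}^{1,1})$ part of the $Z_1^T$-norm follows by combining $(\ref{ap_est_Energy})$ with the coercivity $(\ref{chap4_L1_controle})$, which yields
$$\frac{\alpha\kappa}{2}\int_0^T \|q_s\|_{1,1}^2\, ds \leq \alpha\int_0^T L_1(q_s)\, ds < \infty.$$
Continuity of $q_0 \mapsto q(\cdot, q_0)$ is obtained by writing the difference $q - q'$ of two solutions with distinct data, noting that it satisfies a damped linear wave equation with source $-((v+f)^3 - (v'+f)^3)$, and closing the analogue of the $G_1$-energy estimate with the cubic Lipschitz bound above; Gronwall then yields continuity in $Z_1^T$, the coefficients being uniformly bounded thanks to the global estimate already established for $v$ and $v'$.

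\emph{Main obstacle.} The only delicate ingredient is that the cubic nonlinearity sits at the scaling borderline of $\mathcal{H}^{1,0}$ in three dimensions: it is tractable because $H^1 \hookrightarrow L^6$ is exactly sharp for a cubic power, so $(v+f)^3 \in L^2$ with Lipschitz constant quadratic in the $H^1$-norms of its arguments. Beyond that, everything is bookkeeping layered on top of the a priori estimates already proved in Proposition \ref{prop_estimat_genral}.
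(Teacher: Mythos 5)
Your proposal is correct and follows essentially the same route as the paper: a contraction argument on the Duhamel formulation (the paper runs it directly in the $Z_1^T$-norm, you run it in $C([0,T_0],\mathcal{H}^{1,0})$ and recover the $L^2_t\mathcal{H}^{1,1}$ component afterwards from the dissipation, a cosmetic difference), globalization via the a priori bound $(\ref{ap_est_Energy})$ together with $G_1\geq \tfrac14 E$, and uniqueness plus continuity in the data by writing the difference of two solutions as a damped linear equation with the cubic commutator source and closing with Gronwall. The key nonlinear ingredient you isolate — the Lipschitz bound on the cubic term via $H^1\hookrightarrow L^6$ — is exactly what underlies the paper's "standard arguments."
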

\begin{proof}
 For a given $q_0\in \mathcal{H}^{1,0}$, for $T>0,$ we set the map $\psi:Z_1^T\to Z_1^T$:
\begin{equation}\label{duhamel_1}
\psi q(t)=S(t)q_0-\int_0^tS(t-s)B(v+f)ds,
\end{equation}
Let $R>0$, consider the ball $B_R$ in $Z_1^T$ centred at $0$ and of radius $R$, we show by standard arguments the following estimates:
\begin{equation*}
\|\psi q\|_{Z_1^T}\leq\tau_{T,R} \|q_0\|_{Z_1^T},
\end{equation*}
\begin{equation*}
\|\psi q_1-\psi q_2\|_{Z_1^T}\leq \tau_{T,R} \|q_1-q_2\|_{Z_1^T},
\end{equation*}
where $\tau_{T,R}$ decreases to $0$ with $T.$ Thus for arbitrary $R$, the time $T=T(R)$ can be choosen so that $\psi$ be a contraction as map from $B_R$ to $B_R,$ we have then a local in time existence
that we can globalize by iteration using the estimate $(\ref{ap_est_Energy})$.\\
For given two solutions $v_1,v_2$ to $(\ref{nonlinear_flow})$, set $w=v_1-v_2$. Then $w$ satisfies the equation
\begin{equation*}
\ds w-\Delta_0w=\alpha\Delta_0\dt w-w\{(v_1+f)^2+(v_2+f)^2+(v_1+f)(v_2+f)\},
\end{equation*}
it is not difficult to derive the following
\begin{equation}\label{nonlin_flow_uniq}
N_1(w,\dt{w})\lleq_{v_1,v_2,f} N_1(w(0),\dt{w}(0)).
\end{equation}
In fact $(\ref{nonlin_flow_uniq})$ establishes, at the same time, the continuity (in space) for the solution.
\end{proof}

\section{The stochastic linear KG equation, exponential control}
In this section, we present a treatment of the following equation:
\begin{align*}
\dt[z,\dt z] &=[\dt z,\Delta_0z+\alpha\Delta_0\dt z]+\sqrt{\alpha}[0,\eta]\nonumber\\
&=A[z,\dt z]+\sqrt{\alpha}\dt \hat{\zeta},
\end{align*}
with null initial condition, supplemented with the Dirichlet condition in the case where the equation is considered on a bounded domain. Then the solution is given by the following stochastic convolution (see Section $\ref{intro_section_convol_stoch_et_Ito_lemma}$ for a definition and some properties):
\begin{equation*}
[z,\dt z](t)=\sqrt{\alpha}\int_0^tS(t-s)d\hat{\zeta}(s).
\end{equation*}
In view of the discussion in Section $\ref{intro_section_convol_stoch_et_Ito_lemma}$, $[z,\dt z]$ belongs to $\mathcal{H}^{1,0}$ (resp. $\mathcal{H}^{2,1}$) if $A_0$ (resp. $A_1$) is finite. In what follows we suppose that $A_1$ is finite. An exponential control for $[z,\dt z]$ is given for any $t$ by the Fernique theorem, here we prove an exponential control on the time-averaged norm. Such a control will be used to prove Proposition $\ref{chap5:cond_kS}.$
\begin{prop}\label{chap_5: exp_cont_lin}
Let $0<\epsilon\leq \kappa /(2A_1e)$, we have
\begin{equation}\label{chap5_est_exp_z}
\E e^{\frac{\epsilon}{t}\int_0^t\|[z,\dt z]\|_{2,1}^2ds}\leq 3,
\end{equation}
where $\kappa=\min(1,m_0^2+\lambda_0).$
\end{prop}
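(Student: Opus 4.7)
The solution $[z,\dt z]$ of the stochastic linear damped equation is a centered Gaussian process in $\mathcal{H}^{2,1}$ (since $A_1<\infty$), and the target quantity $\int_0^t\|[z_s,\dt z_s]\|_{2,1}^2\,ds$ is a non-negative quadratic form in this Gaussian, whose Laplace transform we can control explicitly. My plan is to combine an Itô energy identity for the quadratic Lyapunov functional $N_2$ from Section~4 with an exponential-supermartingale argument in the style of the Kuksin--Shirikyan framework.

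The first step is to apply the Itô formula of Theorem~A.7.5 of \cite{KS12} to $N_2(z_t,\dt z_t)$. The deterministic drift reproduces the dissipation computation already carried out for the damped linear equation, yielding a damping rate bounded below---thanks to Lemma~\ref{neem} with $m=2,\,s=1$---by $\tfrac{\alpha\kappa^2}{2}\|[z,\dt z]\|_{2,1}^2$. The Itô correction from the additive noise $\sqrt{\alpha}\,d\hat\zeta$ is a deterministic contribution of order $\alpha A_1$ per unit time, corresponding to the trace of the noise covariance read in the $\mathcal{H}^{2,1}$-topology. All in all, one obtains an identity of the form
\begin{equation*}
N_2(z_t,\dt z_t)+\tfrac{\alpha\kappa^2}{2}\!\int_0^t\!\|[z_s,\dt z_s]\|_{2,1}^2\,ds\le \tfrac{\alpha B}{2}\,t+M_t,
\end{equation*}
with $B\lleq A_1$ and $M_t$ a continuous local martingale whose bracket density is itself controlled by $\alpha A_1\|[z_t,\dt z_t]\|_{2,1}^2$ up to constants.

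To upgrade this $L^1$-type identity to the desired exponential moment, apply Itô a second time to $\Psi_\lambda(t):=\exp(\lambda N_2(z_t,\dt z_t))$. The resulting drift is
\begin{equation*}
\Psi_\lambda\Bigl[-\tfrac{\alpha\lambda\kappa^2}{2}\|[z_t,\dt z_t]\|_{2,1}^2+\tfrac{\alpha\lambda B}{2}+\tfrac{\lambda^2}{2}\,\tfrac{d\langle M\rangle_t}{dt}\Bigr],
\end{equation*}
so choosing $\lambda=\epsilon\le\kappa/(2A_1 e)$ makes the coefficient of $\|[z,\dt z]\|_{2,1}^2$ inside the bracket non-positive; the factor $e$ emerges through the optimal constant in the elementary bound $xe^{-x}\le e^{-1}$ used when absorbing the quadratic-variation term into the dissipation. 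The resulting non-negative process is a supermartingale, and, combining the equivalence $N_2\asymp\tfrac12\|\cdot\|_{2,1}^2$ with the null initial data $z_0=\dt z_0=0$, one recovers (\ref{chap5_est_exp_z}) with an explicit constant at most~$3$.

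The main technical obstacle is a bookkeeping one. One must pin down precisely the $A_1$-dependence of both the Itô correction $B$ and of the bracket density $d\langle M\rangle_t/dt$; only the correct proportionality to $A_1$ (as opposed to a higher-regularity trace such as $A_2$) yields the sharp threshold $\kappa/(2A_1 e)$ stated in the proposition. Uniformity of the estimate in $\alpha\in(0,1)$ is essentially automatic, because $\alpha$ factors through both the dissipation and the noise forcing, leaving the ratio $B/\kappa^2$ dimensionally correct.
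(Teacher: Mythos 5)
Your strategy (an exponential supermartingale for $e^{\lambda N_2}$) does not deliver the statement as written; the obstruction is a mismatch of normalizations. After absorbing the bracket of $M_t$ into part of the dissipation, what the supermartingale argument actually yields is an estimate of the form
\begin{equation*}
\E \exp\Bigl(c\lambda\alpha\int_0^t\|[z,\dt z]\|_{2,1}^2\,ds\Bigr)\leq e^{C\lambda\alpha A_1 t},\qquad \lambda\leq\lambda_0\sim \kappa/A_1,
\end{equation*}
that is, control of $\alpha\int_0^t$ with a right-hand side growing like $e^{C\alpha t}$. The proposition asks for control of $\frac{1}{t}\int_0^t$ with a bound ($\leq 3$) uniform in both $t>0$ and $\alpha\in(0,1)$. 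Matching the exponents forces $\lambda=\epsilon/(c\alpha t)$, which is compatible with the smallness constraint $\lambda\leq\lambda_0$ only when $\alpha t\gtrsim \epsilon/\lambda_0$; in the complementary regime --- fixed $t$ and $\alpha\to 0$, which is exactly the regime in which $(\ref{chap5_est_exp_z})$ is used in the proof of Proposition $\ref{chap5:cond_kS}$ --- your argument gives nothing. Your closing remark that uniformity in $\alpha$ is ``essentially automatic because $\alpha$ factors through both the dissipation and the forcing'' is correct for the individual moments $\E\|[z,\dt z]\|_{2,1}^{2p}$, where the two factors of $\alpha$ cancel through Gronwall, but not for the exponential of the \emph{time-averaged} integral, whose $1/t$ normalization carries no $\alpha$. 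Relatedly, the threshold $\kappa/(2A_1e)$ cannot emerge from the bound $xe^{-x}\leq e^{-1}$ invoked to absorb the quadratic variation: in the paper the factor $e$ comes from Stirling, $p!\geq (p/e)^p$, when an exponential series is summed.

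The paper's proof sidesteps the issue entirely: it applies the It\^o formula on Galerkin projections to the powers $\|[z^N,\dt z^N]\|_{2,1}^{2p}$, obtains by Young and Gronwall the $t$- and $\alpha$-uniform moment bounds $\E\|[z,\dt z]\|_{2,1}^{2p}\leq (A_1p/\kappa)^p$ (then Fatou in $N$), transfers them to the time average by Jensen, and reconstructs the exponential by summing the series term by term --- this is where $p!\geq(p/e)^p$ produces the stated threshold and the constant $3$. If you want a ``one-shot'' argument, the viable alternative is the Gaussian route you allude to in your first sentence: by Jensen, $\E\exp(\frac{\epsilon}{t}\int_0^t\|[z_s,\dt z_s]\|_{2,1}^2ds)\leq \sup_{s\leq t}\E e^{\epsilon\|[z_s,\dt z_s]\|_{2,1}^2}$, and the latter is a Fernique-type bound for a Gaussian quadratic form with trace at most $A_1/\kappa$; but that is a different proof from the one you wrote, and as it stands your argument has a genuine gap.
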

\begin{proof}
\textbf{Step $1$: The finite-dimensional approximating equation and estimation of the moments.}
Let $P_N$ be the projection on the finite-dimensional space $E^N:=\text{span}\{e_0,...,e_N\}$. Set $z^N=P_Nz,$ $A^N=P_NA$, $\hat{\zeta}^N=P_N\hat{\zeta}$, $\Delta^N=P_N\Delta$ and $A_{m,N}=\sum_{j=0}^N(m_0^2+\lambda_j)^ma_j^2$. Then we have
\begin{equation*}
\dt[z^N,\dt z^N]=A^N[z^N,\dt z^N]+\sqrt{\alpha}\dt\hat{\zeta}^N.
\end{equation*}
It is a matter of direct verification that the norm $f(z^N,\dt z^N):=\|[z^N,\dt z^N]\|_{2,1}^2$ is still preserved by the approximating equation in which we take $\alpha=0.$ The fonction $f$ belongs to $C^2(E^N\times E^N,\R)$, then we can apply the finite-dimensional Itô formula:
\begin{equation*}
df(z^N,\dt z^N)=\alpha\left(\frac{A_{1,N}}{2}-\|[z^N,\dt z^N]\|_{2,2}^2\right)dt+\sqrt{\alpha}\sum_{m=0}^Na_m((-\Delta_0^N)^{1/2}\dt z^N,(-\Delta_0^N)^{1/2}e_m)d\beta_m.
\end{equation*}
Now let $p>1$, we have that $f^p$ still belongs to $C^2(E^N\times E^N,\R)$, the Itô formula gives that
\begin{align*}
df^p(z^N,\dt z^N) &=pf^{p-1}df+\frac{\alpha p(p-1)}{2}\sum_{j=0}^Na_m^2f^{p-2}((-\Delta_0^N)^{1/2}\dt z^N,(-\Delta_0^N)^{1/2}e_m)^2dt\\
&=:(1)+(2)
\end{align*}
\begin{align*}
(1)=\alpha p\|[z^N,\dt z^N]\|_{2,1}^{2(p-1)}\left(\frac{A_{1,N}}{2}-\|[z^N,\dt z^N]\|_{2,2}^2\right)dt+\theta(t),
\end{align*}
where $\theta(t)$ is the stochastic integrand and verifies $\E\int_0^t\theta(s)=0.$ We see that
\begin{align*}
(1) \leq \alpha p\|[z^N,\dt z^N]\|_{2,1}^{2(p-1)}\frac{A_{1,N}}{2}-\alpha p\kappa\|[z^N,\dt z^N]\|_{2,1}^{2p}+\theta(t).
\end{align*}
 On the other hand,
\begin{align*}
(2)\leq \frac{\alpha p(p-1)}{2}\|[z^N,\dt z^N]\|_{2,1}^{2(p-1)}A_{1,N}.
\end{align*}
Then, with use of the Young inequality, 
\begin{align*}
df^p(z^N,\dt z^N)-\theta(t)&\leq -\alpha p\kappa\|[z^N,\dt z^N]\|_{2,1}^{2p}dt+\frac{\alpha p^2}{2}\|[z^N,\dt z^N]\|_{2,1}^{2(p-1)}A_{1,N}dt \\
&\leq -\alpha p\kappa\|[z^N,\dt z^N]\|_{2,1}^{2p}dt+\frac{\alpha p\kappa}{2}\|[z^N,\dt z^N]\|_{2,1}^{2p}dt+\frac{\alpha A_1^p p^{p+1}}{2\kappa^{p-1}}dt.
\end{align*}
After integrating in $t$, taking the expectation, and using the Gronwall lemma, we get
\begin{equation*}
\E\|[z^N,\dt z^N]\|_{2,1}^{2p}\leq \frac{A_1^p p^{p}}{\kappa^{p}}.
\end{equation*} 
\\ \textbf{Step $2$: Passage to the limit $N\to\infty$.}
Using Fatou's lemma, we get the estimations for $[z,\dt z]$
\begin{equation}\label{chap5_estim_power,z}
\E\|[z,\dt z]\|_{2,1}^{2p}\leq \frac{A_1^p p^{p}}{\kappa^{p}}.
\end{equation}
\\ \textbf{Step $3$: Exponential control.}
Integrating in $t$, we find
\begin{equation*}
\E\left(\frac{1}{t}\int_0^t\|[z,\dt z]\|_{2,1}^{2p}ds\right)\leq \frac{A_1^p p^{p}}{\kappa^{p}}.
\end{equation*}
Thanks to Jensen's inequality, we infer
\begin{equation*}
\E\left(\frac{1}{t}\int_0^t\|[z,\dt z]\|_{2,1}^{2}ds\right)^p\leq \frac{A_1^p p^{p}}{\kappa^{p}}.
\end{equation*}
Now, let $0<\epsilon\leq \kappa /(2A_1e),$ then we have
\begin{equation*}
\E\frac{\left(\frac{\epsilon}{t}\int_0^t\|[z,\dt z]\|_{2,1}^{2}ds\right)^p}{p!}\leq \frac{p^{p}}{2^{p}e^pp!}.
\end{equation*}
We recall that for any integer $p>0$, we have that $p!\geq\left(\frac{p}{e}\right)^p,$ then we arrive at the claimed result.
\end{proof}
\begin{rmq}
One could use directly the infinite-dimensional Itô formula, the requirements of the latter are satisfied using the Fernique theorem.
\end{rmq}
\section{Global dynamics of the damped-perturbed KG and well structuredness}
In what follow we use the following notations:
\begin{align*}
y_t &=[u,\dt u],\ \ \ g=[\dt{u},\ \Delta_0u-u^3+\alpha\Delta_0\dt u]=:[g_1,g_2],\\
\hat{\zeta}(t)&=\sum_{m=0}^\infty a_m\hat{e}_m  \beta_m(t), \ \ \ \hat{e}_m=[0,e_m].
\end{align*}
In the rest of the paper we suppose that the quantities $A_i=\sum_{j=0}^\infty a_m^2(m_0^2+\lambda_m)^i$ are finite for $i=0,1$.

\begin{prop}\label{KG_stoch_WP}
Let $m\in\{1,2\}$. The equation $(\ref{ddKG})$ is well structured on \\ $(\mathcal{H}^{m,m-2},\mathcal{H}^{m,m-1},\mathcal{H}^{m,m})$ in the sense of Definition $\ref{intro_WS}$.
\end{prop}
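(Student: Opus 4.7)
My plan is to reduce the stochastic problem to the pathwise nonlinear problem of Section~4 via the standard splitting $y_t = q_t + [z,\dt z](t)$, where $[z,\dt z]$ is the stochastic convolution studied in Proposition~\ref{chap_5: exp_cont_lin} and $q_t=[v,\dt v]$ solves the \emph{random} but deterministic-for-fixed-$\omega$ equation
\begin{equation*}
\ds v - \Delta_0 v + (v+z)^3 = \alpha \Delta_0 \dt v, \qquad q(0) = y_0.
\end{equation*}
Since $A_1<\infty$ (a fortiori $A_0<\infty$), Proposition~\ref{chap_5: exp_cont_lin} combined with the Sobolev embedding $H^2\hookrightarrow L^\infty$ gives, almost surely and for every $T>0$, that the first component $z$ belongs to $L^\infty(0,T;L^\infty\cap L^6)$, which is precisely the framework for the perturbation $f$ in Section~4.

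For the stochastic wellposedness part of Definition~\ref{intro_WS}, the case $m=1$ follows directly by applying Proposition~\ref{ExistUniqnonlin} pathwise with $f=z(\omega,\cdot)$, which yields the existence, uniqueness and continuous dependence of $q$ in $Z_1^T$, and therefore of $y=q+[z,\dt z]$ in $C([0,T];\mathcal{H}^{1,0})\cap L^2(0,T;\mathcal{H}^{1,1})$. The case $m=2$ requires rerunning the same contraction-mapping scheme of Proposition~\ref{ExistUniqnonlin} in the space $Z_2^T$: the linear semigroup $S(t)$ is also continuous on $\mathcal{H}^{2,1}$, and the cubic Duhamel term is handled because $H^2$ is an algebra in dimension~$3$, making the map $v\mapsto (v+z)^3$ locally Lipschitz from $Z_2^T$ to itself when $z\in L^\infty_t H^2_x$. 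Local existence is then globalized by the a priori estimate~(\ref{ap_est_G_2}) applied pathwise: the exponential factor $e^{\int_0^T R(z)\,ds}$ is a.s.\ finite by Proposition~\ref{chap_5: exp_cont_lin}, and the forcing term $\int_0^T \alpha\|v+z\|_{L^6}^6 ds$ is controlled by (\ref{ap_est_Energy}) applied at the $G_1$ level together with the embedding $H^1\subset L^6$.

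For the Ito property, adaptedness of $h=[\dt u,\Delta_0 u - u^3 + \alpha\Delta_0\dt u]$ is inherited from the adaptedness of $y$, which in turn comes from the adaptedness of the stochastic convolution and the pathwise-deterministic construction of $q$. The almost sure integrability $\int_0^t(\|y_s\|_{m,m}^2+\|h_s\|_{m,m-2}^2)ds<\infty$ reduces, after expanding $h$, to $\int_0^t\|y_s\|_{m,m}^2 ds<\infty$ plus integrability of $\int_0^t\|u_s^3\|_{m,m-2}^2 ds$; the former follows from the defining estimate of $Z_m^T$ once existence has been established, and the latter from the Sobolev embeddings and the a priori control of the $\mathcal{H}^{1,1}$ or $\mathcal{H}^{2,2}$-norms. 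The noise condition $\|\zeta(t)\|_{m,m-1}<\infty$ is exactly the assumption $A_{m-1}<\infty$ built into the statement.

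The main obstacle I anticipate is the $m=2$ case: one must show that the contraction argument closes in $Z_2^T$ despite the randomness of $z$, and globalize using the $G_2$-estimate of Proposition~\ref{prop_estimat_genral} driven by pathwise random coefficients $R(z)$ and $\|v+z\|_{L^6}^6$. The delicate point is to verify that these random quantities are a.s.\ finite on every bounded time interval, which is exactly where Proposition~\ref{chap_5: exp_cont_lin} and the assumption $A_1<\infty$ become essential; without finiteness of $A_1$ one cannot reach $\mathcal{H}^{2,1}$-regularity for $[z,\dt z]$ and the whole $m=2$ argument collapses.
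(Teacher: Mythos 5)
Your proposal follows essentially the same route as the paper: the splitting $y=q+[z,\dt z]$ into the stochastic convolution and the pathwise nonlinear equation of Section~4 with $f=z$, wellposedness via Proposition~\ref{ExistUniqnonlin}, and verification of the Ito property from the a priori bounds of Proposition~\ref{prop_estimat_genral} together with the standing assumption $A_0,A_1<\infty$. If anything you are slightly more careful than the paper on two points it leaves implicit, namely that placing $f=z$ in the framework of Section~4 requires the $L^\infty_{t,x}$ bound coming from $A_1<\infty$ and $H^2\hookrightarrow L^\infty$ (not merely $z\in Z_1^\infty$), and that the $m=2$ contraction must be rerun in $Z_2^T$.
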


\begin{proof}
First, we prove the stochastic global wellposedness on $\mathcal{H}^{1,0}$ in the following steps:\\
\textbf{Step $1$: Splitting the problem.} In order to solve the initial value problem of the equation $(\ref{ddKG})$, we split the latter into the following two equations:
\begin{equation}\label{linear_eq}
\ds z_\alpha-\Delta_0 z_\alpha=\alpha\Delta_0\dt z_\alpha+\sqrt{\alpha}\eta,
\end{equation}
\begin{equation}\label{nonlinear_eq}
\ds v-\Delta_0 v+(v+z_\alpha)^3=\alpha\Delta_0\dt v.
\end{equation}
Under the initial conditions $[z_\alpha,\dt z_\alpha]|_{t=0}=[0,0],[v,\dt v]|_{t=0}=[u,\dt u]|_{t=0}$, we have that $u=v+z_\alpha$ solves $(\ref{ddKG})\footnote{Notice that this kind of decompositon appears in litterature of stochastic PDEs (see e.g. \cite{KS12}) and of dispersive PDEs (see e.g. \cite{burktzvt_probwav} for the context of cubic wave equation).}.$ Therefore it suffices to solve each of these two equations.

 \textbf{Step $2$: The linear stochastic problem.}
The linear equation $(\ref{linear_eq})$, supplemented by the initial data $z_\alpha|_{t=0}=\dt z_\alpha|_{t=0}=0,$ is solved by the following stochastic convolution
\begin{equation*}
[z_\alpha,\dt z_\alpha](t)=\sqrt{\alpha}\int_0^tS(t-s)d\hat{\zeta}(s).
\end{equation*}
The solution $[z_\alpha,\dt z_\alpha]$ is almost surely in $Z_1^\infty$ when $A_0$ is finite (see Subsection $\ref{intro_section_convol_stoch_et_Ito_lemma}$ for properties of the stochastic convolution). 

\textbf{Step $3:$ The nonlinear deterministic problem.}
The initial value problem of the nonlinear equation $(\ref{nonlinear_eq})$ is solved by a deterministic way. Suppose $A_0$ finite. Fix $\omega$ for which $z_\alpha \in Z_1^\infty,$ we can then take $f=z_\alpha$ in $(\ref{nonlinear_flow})$ and the problem is solved in view of Proposition $\ref{ExistUniqnonlin}.$

\textbf{Step $4:$ Progressive measurability and continuity.}
By the definition of $z_\alpha$ and $v$, we have that the solution  $u=v+z_\alpha$ is $\sigma(u_0,\dt{u}_0,\mathcal{F}_t)-$adapted and $u$ is continuous in time (with values in $H^1$). Then using the Proposition $1.13$ of \cite{karatshre}, we get the progressive measurability for $u$.\\
The continuity (w.r.t. initial data) property follows that established for the "nonlinear solution" $v$, since the "linear solution" $z$ does not depend on the initial data.\\
Now, we prove that the solution $y_t$ satisfies the assumptions $(\ref{intro_hyp_KS})$ on the Gelfand triples \\$(\mathcal{H}^{m,m-2},\mathcal{H}^{m,m-1},\mathcal{H}^{m,m})$ for $m=1,2:$
\begin{enumerate}
\item $y_t$ is a Itô process in $\mathcal{H}^{m,m-2}$ since the process $g(y):=[\dt u,\Delta_0u-u^3+\alpha\Delta_0\dt u]$ is $\mathcal{F}_t$-adapted, and we infer from Proposition $\ref{prop_estimat_genral}$ the following
\begin{equation*}
\P\left(\int_0^t\|g(y_s)\|_{\mathcal{H}^{m,m-2}}^2ds <\infty\ \ \ for \ all\ \  t\geq 0\right)=1.
\end{equation*}
\item The quantities $A_0,\ A_1$ are finite by assumption;
\item Again, we use the estimates of Proposition $\ref{prop_estimat_genral}$ to see that
\begin{equation*}
\P\left(\int_0^t\|y_s\|^2_{\mathcal{H}^{m,m}}ds<\infty\ \ \ for \ all\ \  t\geq 0\right)=1.
\end{equation*}
\end{enumerate}
The proof is complete.
\end{proof}

In view of the wellposedness established above, we are able to define the flow map of $(\ref{ddKG})$
\begin{equation*}
\phi_r^\alpha w=y^\alpha(r,w),
\end{equation*}
the transition function 
\begin{equation*}
P_t^\alpha(w,E)=\P(\phi_t^\alpha w\in E),
\end{equation*}
and the Markov semigroups, with use of the Feller property induced by the continuity of the flow,
\begin{align*}
\p f(w) &=\int f(v)P_t^\alpha(w,dv), \ \ \ C_b(\mathcal{H}^{1,0})\to C_b(\mathcal{H}^{1,0});\\
\q \nu(E) &=\int P_t^\alpha(v,E)\nu(dv),\ \ \  \mathfrak{p}(\mathcal{H}^{1,0})\to\mathfrak{p}(\mathcal{H}^{1,0}),
\end{align*}
where $\mathfrak{p}(H)$ is the set of probability measures on $H$. The functions $\p$ and $\q$ verify the duality relation
\begin{equation*}
(\p f,\nu)=(f,\q v).
\end{equation*}

\section{Stationary measures for the damped-perturbed KG}\label{sect_prv}
We suppose that $A_0$ is finite and recall the notation $\gamma_0=\frac{2\kappa^2}{2+m_0^2+\lambda_0}$.
\begin{thm}\label{Thm_alpha}
For any $\alpha\in (0,1),$ the problem $(\ref{ddKG})$ admits an invariant measure $\mu_\alpha$ defined concentrated on $\mathcal{H}^{2,1}$. The invariant measures $\mu_\alpha$ of $(\ref{ddKG})$ satisfy the following properties
\begin{enumerate}
\item For any $\alpha\in (0,1)$
 \begin{equation}\label{estim_unif_L1}
\int_{\mathcal{H}^{1,0}}L_1(y)\mu_\alpha(dy)=\frac{A_0}{2}.
\end{equation}
\item For any $p\geq 1$, we have
\begin{align}
\int_{\mathcal{H}^{1,0}} G^p_1(y)\mu_\alpha(dy) &\leq \left(\frac{2pA_0}{\gamma_0}\right)^p.\label{estim_unif_Gp}
\end{align}
\item There is $C$ independent of $\alpha$ such that
\begin{equation}\label{estim_unif_H2H1}
\int_{\mathcal{H}^{1,0}}\|y\|^2_{\mathcal{H}^{2,1}}\mu_\alpha(dy)\leq C.
\end{equation}
\end{enumerate}
\end{thm}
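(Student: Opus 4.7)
\emph{Proof plan.} The strategy will be the standard Krylov--Bogoliubov construction, driven by the almost conservation laws $G_1$ and $G_2$ through stationarity relations coming from the It\^o formula. Because Proposition \ref{KG_stoch_WP} ensures that $y_t$ is an It\^o process on the relevant Gelfand triples, the infinite-dimensional It\^o formula applies to $G_1$ and $G_2$. Only the $\tfrac12\|\dt u\|^2$ piece of these functionals interacts with the noise through a nonzero second derivative, producing an It\^o correction $\tfrac{\alpha A_0}{2}$; combined with the dissipation identity (\ref{cont_almost}) this yields, for a deterministic $y_0 = 0$,
\begin{equation*}
\E G_1(y_t) + \alpha \E \int_0^t L_1(y_s)\,ds \;=\; \tfrac{\alpha A_0}{2}\,t.
\end{equation*}
Dividing by $\alpha T$ and invoking (\ref{chap4_L1_controle}) shows that the time-averaged measures $\mu_\alpha^T:=\tfrac{1}{T}\int_0^T P_t^{\alpha*}\delta_0\,dt$ satisfy $\int \|y\|_{1,1}^2\,d\mu_\alpha^T \lleq A_0/\kappa$ uniformly in $T$. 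The compact embedding $\mathcal{H}^{1,1}\hookrightarrow\mathcal{H}^{1,0}$ furnishes tightness, and the continuity from Proposition \ref{ExistUniqnonlin} gives the Feller property, so any weak accumulation point $\mu_\alpha\in\mathfrak{p}(\mathcal{H}^{1,0})$ is invariant.

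The three quantitative statements will all be obtained by pushing this It\^o/stationarity trick to higher-order observables. For (\ref{estim_unif_Gp}) I would apply It\^o to $G_1^p$: the drift is $-p\alpha G_1^{p-1}L_1+\tfrac{p\alpha A_0}{2}G_1^{p-1}+\tfrac{\alpha p(p-1)}{2}G_1^{p-2}\Phi$, with the diffusion density $\Phi$ bounded by $cA_0 G_1$ because both $\|\dt u\|^2$ and $\alpha^2\|u\|_1^2$ are controlled by $G_1$. Combining with the coercivity $L_1\geq \gamma_0 G_1$ from (\ref{E<F_1<E}) and applying Young's inequality in the same spirit as Step~1 of Proposition \ref{chap_5: exp_cont_lin}, one obtains $\E G_1^p(y_t)\leq (2pA_0/\gamma_0)^p$ uniformly in $t$; lower semi-continuity of $G_1^p$ and Fatou then transfer the bound to $\mu_\alpha$. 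Once $G_1$ is $\mu_\alpha$-integrable, the $p=1$ It\^o identity is fully rigorous at stationarity and yields the equality (\ref{estim_unif_L1}). For (\ref{estim_unif_H2H1}), I would repeat the same procedure with $G_2$, whose drift carries the extra cubic term $\alpha C\|u\|_{L^6}^6$ visible in (\ref{ap_est_G_2}); the Sobolev embedding $\|u\|_{L^6}^6\lleq \|u\|_1^6\lleq G_1^3$ together with the $p=3$ case of (\ref{estim_unif_Gp}) controls this term uniformly in $\alpha$, after which the coercivity $L_2\geq \tfrac{\delta}{2}\|y\|_{2,1}^2$ from (\ref{chap4_L2_controle}) delivers both the $\mathcal{H}^{2,1}$ moment bound and the concentration $\mu_\alpha(\mathcal{H}^{2,1})=1$.

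The main technical obstacle is justifying the It\^o calculus for $G_1^p$ and $G_2$, since these are not $C^2$ cylindrical observables on $\mathcal{H}^{1,0}$ and their moments for $y_t$ are not a priori finite. My plan is to proceed exactly as in Proposition \ref{chap_5: exp_cont_lin}: carry out all the computations on the Galerkin truncation $E^N=\mathrm{span}\{e_0,\ldots,e_N\}$, where the finite-dimensional It\^o formula applies verbatim and the estimates above go through without ambiguity, and then pass to the limit $N\to\infty$ using Fatou together with the convergence of the Galerkin solutions to the SPDE (\ref{ddKG}). Once this step is secured, tightness, the Feller property, invariance, and the Fatou transfer to $\mu_\alpha$ are all soft.
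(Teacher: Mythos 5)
Your overall architecture coincides with the paper's: It\^o's formula applied to $G_1$ and $G_2$, the drift--It\^o-correction balance $\E G_1(y_t)+\alpha\int_0^t\E L_1\,ds=\E G_1(y_0)+\tfrac{\alpha A_0}{2}t$, Krylov--Bogoliubov on time averages of $\mathfrak{P}_t^{\alpha*}\delta_0$, the Young-inequality treatment of $G_1^p$, the control of $\alpha\E\|u\|_{L^6}^6$ by Sobolev embedding and the $p=3$ moment of $G_1$, and the transfer of the bounds to $\mu_\alpha$ by Fatou. However, there is one step that fails as written: you claim that the uniform bound $\int\|y\|_{1,1}^2\,d\mu_\alpha^T\lleq A_0/\kappa$ together with ``the compact embedding $\mathcal{H}^{1,1}\hookrightarrow\mathcal{H}^{1,0}$'' furnishes tightness. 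That embedding is \emph{not} compact: $\mathcal{H}^{1,1}=H^1\times H^1$ and $\mathcal{H}^{1,0}=H^1\times L^2$, so on the first component the embedding is the identity of the infinite-dimensional space $H^1$, and balls of $\mathcal{H}^{1,1}$ are not precompact in $\mathcal{H}^{1,0}$ (take $u_n=e_n/\|e_n\|_1$, $v_n=0$). The $L_1$-dissipation alone therefore does not give tightness. The paper instead first derives the time-averaged bound $\tfrac1t\int_0^t\E\|y_s\|_{2,1}^2\,ds\leq C$ from the $G_2$/$L_2$ estimate with $y_0=0$, and uses compactness of balls of $\mathcal{H}^{2,1}$ in $\mathcal{H}^{1,0}$. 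You do derive the $\mathcal{H}^{2,1}$ moment bound later (for item~3), so the repair is entirely within your own scheme, but the order must be reversed: the $G_2$ estimate has to come \emph{before} the tightness step, not after.

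A secondary, genuinely different choice is your justification of the It\^o calculus for $G_1^p$ and $G_2$ by Galerkin truncation and passage to the limit $N\to\infty$. The paper instead applies the infinite-dimensional It\^o formula of \cite{KS12} directly and devotes Proposition \ref{chap5:cond_kS} to verifying its hypothesis (finiteness of the quadratic variation on finite intervals), which it does via the splitting $u=v+z_\alpha$, the a priori estimate $(\ref{chap5_est_G1_moyenn})$, and the exponential moment bound $(\ref{chap5_est_exp_z})$ for the stochastic convolution. Your route is legitimate in principle, but it silently requires convergence of the Galerkin approximations of the \emph{nonlinear} SPDE $(\ref{ddKG})$ to the solution constructed in Proposition \ref{KG_stoch_WP}; that convergence is proved nowhere in the paper (the wellposedness there is by splitting and fixed point, not Galerkin), so this step is an unproved ingredient of comparable difficulty to what it replaces. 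Everything else --- the It\^o correction $\tfrac{\alpha A_0}{2}$ coming only from the $\tfrac12\|\dt u\|^2$ term, the coercivity $L_1\geq\gamma_0 G_1$, the bound of the diffusion density by $cA_0G_1$, and the derivation of the equality $(\ref{estim_unif_L1})$ from the $p=1$ identity once integrability is known --- matches the paper.
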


\subsection{Step 1: Statistical controls}
We would like to apply Itô's formula ((Theorem $A.7.5$ and Corollary $A.7.6$ of \cite{KS12})) to the functionals $G_1$ w.r.t. the triple $(\mathcal{H}^{1,-1},\mathcal{H}^{1,0},\mathcal{H}^{1,1})$ and $G_2$ w.r.t. $(\mathcal{H}^{2,0},\mathcal{H}^{2,1},\mathcal{H}^{2,2})$.
The polynomial structure of these functionals allows to fill the conditions of Theorem $A.7.5$ of \cite{KS12}. Here, we wish to apply the Itô formula with a deterministic time, then we have to verify the condition of the Corollary $A.7.6$ of the same book, namely the finiteness of the quadratic variation of each of these functionals.
\begin{prop}\label{chap5:cond_kS}
Suppose that $\E E^{\iota}(y_0) <\infty$ with $\iota>1.$ Then the quantities $G_1(y_t)$ and $G_2(y_t)$ have finite quadratic variations on any finite interval.
\end{prop}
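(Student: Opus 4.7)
The quadratic variation of $G_i(y_t)$ is produced by the stochastic integral coming from Itô's formula, and since the noise $\sqrt{\alpha}\,d\hat{\zeta}=\sqrt{\alpha}\sum_m a_m[0,e_m]d\beta_m$ acts only in the velocity component, the only derivatives of $G_i$ that enter are those with respect to $v=\partial_t u$. A direct computation gives
\begin{equation*}
\partial_v G_1(y)=\partial_t u+\tfrac{\alpha(m_0^2+\lambda_0)}{2}\,u,\qquad \partial_v G_2(y)=\partial_t u-\tfrac{\alpha}{2}\Delta_0 u,
\end{equation*}
and the quadratic variations are therefore
\begin{equation*}
[G_i]_t=\alpha\int_0^t\sum_{m}a_m^2\bigl(\partial_v G_i(y_s),e_m\bigr)^2\,ds,\qquad i=1,2.
\end{equation*}
The problem reduces to a pathwise bound of the right-hand sides on any finite interval $[0,T]$.

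\textbf{Mode sums.} Since $A_1=\sum_m a_m^2(m_0^2+\lambda_m)<\infty$, one has $\sup_m a_m^2<\infty$ and even $\sup_m a_m^2(m_0^2+\lambda_m)<\infty$. Using $(-\Delta_0 u,e_m)=(m_0^2+\lambda_m)(u,e_m)$ and the Parseval identity, I expect bounds of the form
\begin{equation*}
\sum_m a_m^2 (\partial_v G_1,e_m)^2\lleq \|\partial_t u\|^2+\|u\|^2,\qquad \sum_m a_m^2 (\partial_v G_2,e_m)^2\lleq \|\partial_t u\|^2+\|u\|_2^2,
\end{equation*}
with constants depending only on $\alpha$, $m_0^2+\lambda_0$ and $A_1$. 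Thus it suffices to verify that $\int_0^T E(y_s)\,ds$ is almost surely finite for $G_1$, and that $\int_0^T(E(y_s)+\|u_s\|_2^2)\,ds$ is almost surely finite for $G_2$.

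\textbf{Pathwise control via the splitting.} I invoke the decomposition $u=v+z_\alpha$ from the proof of Proposition \ref{KG_stoch_WP}. The stochastic convolution $z_\alpha$ lies almost surely in $Z_2^T$ since $A_1<\infty$, and Proposition \ref{chap_5: exp_cont_lin} provides exponential integrability of $\tfrac{1}{T}\int_0^T\|[z_\alpha,\partial_t z_\alpha]\|_{2,1}^2\,ds$. Applying the deterministic a priori estimates (\ref{ap_est_Energy}) and (\ref{ap_est_G_2}) to $v$ with $f=z_\alpha$, and controlling the exponent $R(z_\alpha)$ and the source terms $\|z_\alpha\|_{L^6}^6$, $\|v+z_\alpha\|_{L^6}^6$ through the Sobolev embedding $H^2\hookrightarrow L^\infty\cap L^6$ in dimension 3, I obtain almost-sure pathwise bounds of $E(v_t)+\alpha\int_0^t\|\partial_tv\|_1^2\,ds$ and of $G_2(v_t)+\alpha\int_0^t\|v_s\|_2^2\,ds$. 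The hypothesis $\mathbb{E}E^\iota(y_0)<\infty$ with $\iota>1$ ensures $E(y_0)$ is almost surely finite, which is the input the Gronwall factor in (\ref{ap_est_Energy}) needs; for $G_2$, the corresponding control requires (implicitly) that $G_2(y_0)<\infty$ a.s. Adding the $z_\alpha$ contribution yields the desired bound on $u=v+z_\alpha$.

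\textbf{Main obstacle.} The delicate point is the $G_2$ case, where one has to produce $\int_0^T\|u_s\|_2^2\,ds<\infty$ almost surely even though the damped wave flow carries no parabolic smoothing. The Gronwall factor $\exp\bigl(\int_0^T R(z_\alpha)\,ds\bigr)$ appearing in (\ref{ap_est_G_2}) is a random exponential whose a.s. finiteness rests on the $H^2$-moment bounds for $z_\alpha$ established in Proposition \ref{chap_5: exp_cont_lin}, together with the embedding $H^2\hookrightarrow L^\infty$ in 3D. It is this bookkeeping—matching the weights $a_m^2(m_0^2+\lambda_m)^2$ arising from $\Delta_0 u$ in $\partial_v G_2$ with the summability provided by $A_1$, and tracking the random exponents through (\ref{ap_est_G_2})—rather than any new analytic idea that constitutes the core of the argument.
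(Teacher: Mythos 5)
Your computation of the quadratic variation and your reduction to mode sums follow the same skeleton as the paper, but there is a genuine gap in what you end up proving. You establish \emph{almost-sure pathwise} finiteness of $[G_i]_t$, whereas the It\^o formula the paper invokes (Corollary $A.7.6$ of \cite{KS12}) and, more importantly, its later use in Proposition \ref{ito_sur_functionals} (where one needs $\E\int_0^t\Theta_i(s)=0$) require finiteness of the \emph{expected} quadratic variation $\sum_m a_m^2\,\E\int_0^t|\nabla_yG_i(y;\hat e_m)|^2ds$; a.s.\ finiteness alone only makes the stochastic integral a local martingale. This is precisely where your reading of the hypothesis goes wrong: $\E E^{\iota}(y_0)<\infty$ with $\iota>1$ is not there to ensure $E(y_0)<\infty$ a.s.\ (for that, $\iota=1$ or even no moment at all would do). It is needed to take expectations of the Gronwall bound, which is a \emph{product} of a random exponential factor $\exp\bigl(c\int_0^t\|z_\alpha\|_{L^\infty}^2ds\bigr)$ and a factor containing $E(q_0)$: the paper splits this product by Young's inequality with exponents $(1^+,\,\tfrac{1^+}{1^+-1})$, controls the exponential piece by Proposition \ref{chap_5: exp_cont_lin} and $H^2\subset L^\infty$, and the other piece by the $\iota>1$ moment of the data together with the polynomial moments \eqref{chap5_estim_power,z} of $z_\alpha$. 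Relatedly, you propose to feed $f=z_\alpha$ into \eqref{ap_est_Energy}, but there the Gronwall exponent $R(f)$ carries a fixed $O(1)$ coefficient on $\|f\|_{L^\infty}^2$, while the exponential moment bound \eqref{chap5_est_exp_z} only holds for coefficients $\leq\kappa/(2A_1e)$; this is exactly why the paper proves the separate estimate \eqref{chap5_est_G1_moyenn} with a tunable small weight $\epsilon/T$ in front of $\|f\|_{L^\infty}^2$. Without that step your argument cannot be upgraded from a.s.\ bounds to bounds in expectation.

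A secondary point: your mode-sum bound for $G_2$, namely $\sum_ma_m^2(\partial_vG_2,e_m)^2\lleq\|\dt u\|^2+\|u\|_2^2$, is cruder than necessary and forces you to control $\int_0^T\|u\|_2^2ds$, hence to assume (as you note) $G_2(y_0)<\infty$ a.s., which is not among the hypotheses. The paper instead writes $a_m^2(m_0^2+\lambda_m)^2u_m^2=\bigl[a_m^2(m_0^2+\lambda_m)\bigr](m_0^2+\lambda_m)u_m^2\leq A_1(m_0^2+\lambda_m)u_m^2$, so that the $\Delta_0u$ contribution is absorbed into $A_1\|u\|_1^2\lleq E$; both cases $i=1,2$ then reduce to the single bound $\E\int_0^tE(q_s)ds<\infty$, with no $H^2$ requirement on the initial data.
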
 
\begin{proof}
We have for $i=1,2,$
\begin{align*}
\sum_{m\geq 0}a_m^2 &\E\int_0^t|\nabla_y G_i(y;\hat{e}_m)|^2ds \lleq \sum_{m\geq 0}a_m^2\E\int_0^t|(\dt u+\frac{\alpha}{2}(-\Delta_0)^{i-1}u;e_m)|^2ds\\
&\lleq \sum_{m\geq 0}a_m^2\E\int_0^t\left\{|(\dt v+\frac{\alpha}{2}(-\Delta_0)^{i-1}v;e_m)|^2+(\dt z+\frac{\alpha}{2}(-\Delta_0)^{i-1}z;e_m)|^2\right\}ds\\
&\lleq_{A_0,A_1}\E\int_0^tE(q_s)ds+\E\int_0^t(\|[z,\dt z]\|_{i-1,0}^2)ds.
\end{align*}
One see, with use of estimates $(\ref{chap5_estim_power,z})$, that
\begin{equation*}
\E\int_0^t(\|[z,\dt z]\|_{i-1,0}^2)ds<\infty \ \ \ for\ \ any \ \ t\geq 0.
\end{equation*}
Now we use estimate $(\ref{chap5_est_G1_moyenn})$, that, for any $\epsilon>0,$ 
\begin{equation*}
\E\int_0^tE(q_s)ds\leq \int_0^t e^{\frac{9s^2+\epsilon s}{\epsilon}}\E\left[e^{\frac{\epsilon}{s}\int_0^s\|f\|_{L^\infty}^2dr}\left(E(q_0)+\frac{1}{2}\int_0^s\left[\|f\|_{L^6}^6+\frac{T}{\epsilon}\|f\|_{L^4}^4\right]dr\right)\right]ds.
\end{equation*}
By the Young inequality, we have for any $\epsilon>0,$
\begin{equation*}
\E\int_0^tE(q_s)ds\lleq \int_0^t e^{\frac{9s^2+\epsilon s}{\epsilon}}\E\left[e^{\frac{1^+\epsilon}{(1^+-1)s}\int_0^s\|f\|_{L^\infty}^2dr}+\underbrace{\left(G_1(q_0)+\frac{1}{2}\int_0^s\left[\|f\|_{L^6}^6+\frac{T}{\epsilon}\|f\|_{L^4}^4\right]dr\right)^{1^+}}_{R(s)}\right]ds.
\end{equation*}
One uses the estimate $(\ref{chap5_estim_power,z})$ and the Jensen inequality to bound
$\E R(s)$ by in $C(1+s^{1^+}).$ And, for  small enough $\epsilon>0$ (here $\epsilon$ depends indeed on the infinitesimal parameter enterring the definition of $1^+$), the estimate $(\ref{chap5_est_exp_z})$ and the embedding $H^2\subset L^\infty$ allow to get the bound
\begin{equation*}
\E e^{\frac{1^+\epsilon}{(1^+-1)s}\int_0^s\|f\|_{L^\infty}^2dr}\leq 3.
\end{equation*} 
Then we get
\begin{equation*}
\E\int_0^tE(q_s)ds\lleq \int_0^te^{\frac{9s^2+\epsilon s}{\epsilon}}(1+s^{1^+})ds <\infty \ \ \ for \ \ all\ \ t\geq0.
\end{equation*}
The proof is finished.\end{proof}
Taking the inequality $(\ref{chap5_est_G1_moyenn})$ to the power $p>1$ and repeating the above argument, we show that $G_1^p$ satisfies $(\ref{intro_KS_Ito_F3})$ as soon as $\E E^{p^+}(y_0)$ is finite. 

\begin{prop}\label{ito_sur_functionals}
Let $\alpha \in (0,1).$ Suppose  $\E G_1^p(y_0)<+\infty$ for any $p>1$. Then the solution $y_t$ of $(\ref{ddKG})$ starting at $y_0$ satisfies the following
\begin{align}
\E G_1(y_t) &+\alpha\int_0^t\E L_1(y_s)ds = \E G_1(y_0)+\frac{\alpha}{2}A_0t,\label{Ito_est11}\\
\E G_1^p(y_t) &\leq e^{-\alpha\gamma_0\frac{p}{2}t}\E G_1^p(y_0)+2\left(\frac{2pA_0}{\gamma_0}\right)^p.\label{Ito_est12}
\end{align}
\end{prop}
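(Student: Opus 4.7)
The plan is to apply the infinite-dimensional Itô formula of \cite{KS12} (Theorem A.7.5 and its deterministic-time Corollary A.7.6) to the functionals $G_1(y_t)$ and $G_1^p(y_t)$ on the Gelfand triple $(\mathcal{H}^{1,-1},\mathcal{H}^{1,0},\mathcal{H}^{1,1})$. Proposition \ref{chap5:cond_kS} provides the finite quadratic variation needed for the deterministic-time version; the moment assumption $\E G_1^p(y_0)<\infty$ for all $p>1$, combined with the a priori bound (\ref{ap_est_Energy}), ensures the required integrability of the martingale integrands for the $p$-th moment computation.

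For the first identity, I expand $dG_1(y_t)$. The drift contribution is precisely the one produced in the almost-conservation calculation leading to (\ref{cont_almost}) in Proposition \ref{prop_estimat_genral} (with $f\equiv 0$), and equals $-\alpha L_1(y_t)\,dt$. The noise $\sqrt{\alpha}\,d\hat{\zeta}$ hits only the $\dt u$-coordinate; inspection of $G_1$ shows that its dependence on $\dt u$ is $\tfrac12\|\dt u\|^2+\tfrac{\alpha(m_0^2+\lambda_0)}{2}(u,\dt u)$, so the second differential with respect to $\dt u$ is the $L^2$-identity, and the Itô correction contributes $\tfrac12\alpha\sum_m a_m^2\|e_m\|^2\,dt=\tfrac{\alpha}{2}A_0\,dt$. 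The remaining stochastic integral is a martingale; taking expectation kills it and gives (\ref{Ito_est11}).

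For (\ref{Ito_est12}), I apply Itô to $\Phi(x)=x^p$ composed with $G_1$. This yields the drift $pG_1^{p-1}\bigl(-\alpha L_1+\tfrac{\alpha}{2}A_0\bigr)\,dt$, a martingale, and the second-order term $\tfrac{p(p-1)}{2}G_1^{p-2}\,d\langle G_1\rangle$. The quadratic variation is read off from the martingale part of the first expansion:
\begin{equation*}
\frac{d\langle G_1\rangle}{dt}=\alpha\sum_m a_m^2\Bigl(\dt u+\tfrac{\alpha(m_0^2+\lambda_0)}{2}u,\,e_m\Bigr)^2\leq \alpha A_0\bigl(2\|\dt u\|^2+\tfrac{\alpha^2(m_0^2+\lambda_0)^2}{2}\|u\|^2\bigr),
\end{equation*}
where I used $\max_m a_m^2\leq A_0$. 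Invoking (\ref{G_1pgE1}) and (\ref{chap5_embed_ineq}) to bound $\|\dt u\|^2$ and $\|u\|^2$ by multiples of $G_1$, and invoking $L_1\geq\gamma_0 G_1$ from (\ref{E<F_1<E}), I arrive after expectation at a scalar differential inequality
\begin{equation*}
\frac{d}{dt}\E G_1^p(y_t)\leq -\alpha p\gamma_0\,\E G_1^p(y_t)+\alpha p\,\kappa_p A_0\,\E G_1^{p-1}(y_t),
\end{equation*}
for an explicit $p$-polynomial constant $\kappa_p$ absorbing the $\tfrac{A_0}{2}$ piece and the quadratic variation contribution.

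The final step closes the inequality by Young: Jensen gives $\E G_1^{p-1}\leq (\E G_1^p)^{(p-1)/p}$, and then applying the weighted Young inequality with exponents $(p,p/(p-1))$ chosen so that the $\E G_1^p$ part equals $\tfrac{\alpha p\gamma_0}{2}\E G_1^p$ produces
\begin{equation*}
\frac{d}{dt}\E G_1^p(y_t)\leq -\frac{\alpha p\gamma_0}{2}\,\E G_1^p(y_t)+\alpha\,C_p\,(A_0/\gamma_0)^p.
\end{equation*}
A standard Gronwall comparison then yields (\ref{Ito_est12}). The main obstacle is a careful bookkeeping of the $p$-dependence of $\kappa_p$ (coming notably from the $p(p-1)$ factor in front of the quadratic variation) through the Young step so that the constant coincides with the announced $2(2pA_0/\gamma_0)^p$; this is arithmetic rather than conceptual, but requires attention to make the exponents and the prefactor land correctly.
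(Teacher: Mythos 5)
Your proposal is correct and follows essentially the same route as the paper: the Itô formula of \cite{KS12} applied to $G_1$ and $G_1^p$ on the triple $(\mathcal{H}^{1,-1},\mathcal{H}^{1,0},\mathcal{H}^{1,1})$, the identification of the drift as $-\alpha L_1$ and the Itô correction as $\tfrac{\alpha}{2}A_0$, then the coercivity $L_1\geq\gamma_0 G_1$ from $(\ref{E<F_1<E})$, a Young-type absorption of the $G_1^{p-1}$ term, and Gronwall. The only (harmless) differences are that you keep the full cross term $\dt u+\tfrac{\alpha(m_0^2+\lambda_0)}{2}u$ in the quadratic variation and apply Jensen--Young at the level of expectations rather than pointwise, both of which lead to the same differential inequality and the same final constant.
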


\begin{proof}
For a functional $F(y)$, we denote by $\nabla_uF$ and $\nabla_vF$ the derivatives w.r.t. the first and the second variable respectively. Let us compute
\begin{align*}
\nabla_yG_1(y,g) &=\nabla_uG_1(y,g_1)+\nabla_{v}G_1(y,g_2)=\nabla_uE(y,g_1)+\nabla_{v}E(y,g_2)+I+II\\
&=-\alpha\|\dt u\|_1^2+I+II,\\
I &=\frac{\alpha}{2}\|\dt u\|^2,\\
II &=\frac{\alpha}{2}(u,g_2)+\frac{\alpha^2}{4}\dt\|u\|_1^2= -\frac{\alpha}{2}(\|u\|_1^2+\|u\|_{L^4}^4),
\end{align*}
thus
\begin{align*}
\nabla_yG_1(y,g)=-\frac{\alpha}{2}((m_0^2+\lambda_0)\|u\|_1^2+2\|\dt u\|_1^2-(m_0^2+\lambda_0)\|\dt u\|^2+(m_0^2+\lambda_0)\|u\|_{L^4}^4)=-\alpha L_1(y).
\end{align*}
On the other hand
\begin{equation*}
\nabla_y^2G_1(y,\hat{e}_m)=\nabla_u^2G_1(y,0)+\nabla_{v}^2G_1(y,e_m)=(e_m,e_m)=1.
\end{equation*}
Then, by Itô formula (see \cite{KS12}, Theorem $A.7.5$ and Corollary $A.7.6$), we have
\begin{align*}
dG_1(y_t)=-\alpha L_1(y)dt+\frac{\alpha}{2}A_0dt+\Theta_1(t),
\end{align*}
where
\begin{equation*}
\Theta_1(t)=\sum_{m=0}^\infty a_m\nabla_yG_1(y,\hat{e}_m)d\beta_m(t)=\sum_{m=0}^\infty a_m(\dt u+\frac{\alpha}{2}u,e_m)d\beta_m(t).
\end{equation*}
Remark that, by an Itô integral property, $\E\int_0^t\Theta_1(s)=0,$ then we arrive at $(\ref{Ito_est11}).$ Now, let $p> 1$, we have, by Itô formula,
\begin{align*}
dG_1^p(y) &=pG_1^{p-1}dG_1(y)+\frac{\alpha p(p-1)}{2}\sum_{m\geq 0}a_m^2G_1^{p-2}(y)(\nabla_yG_1(y,e_m))^2dt\\
&=pG_1^{p-1}dG_1(y)+\frac{\alpha p(p-1)}{2}\sum_{m\geq 0}a_m^2G_1^{p-2}(y)(\dt u,e_m)^2dt. 
\end{align*}
whence it follows that
\begin{align*}
\E G_1^{p}(y_t)+\int_0^t\E f_\alpha(s)ds\leq \E G_1^p(y_0),
\end{align*}
where
\begin{align*}
f_\alpha(t) &= p\frac{\alpha}{2}G_1^{p-1}(2L_1(y)-A_0)- \frac{\alpha p(p-1)}{2}G_1^{p-2}(y)\sum_{m\geq 0}a_m^2(\dt u,e_m)^2.
\end{align*}
We have, with the use of the inequalities $(\ref{E<F_1<E})$, $(\ref{G_1pgE1})$ and the inequality $2p^2-2p\leq 2p^2-p/2$ for $p\geq 0,$
\begin{align*}
f_\alpha(t)&\geq p\frac{\alpha}{2}G_1^{p-1}(y)\left(2L_1(y)-A_0\right)-2p(p-1)\alpha A_0G_1^{p-1}(y)\\
&\geq p\alpha G_1^{p-1}(y)L_1(y)-2\alpha p^2A_0G_1^{p-1}(y)\\
&\geq \alpha\gamma_0pG_1^{p}(y)-2\alpha p^2A_0G_1^{p-1}(y)\\
&\geq \frac{\alpha\gamma_0p}{2}G_1^p(y)-\alpha\frac{2^{p}p^{p+1}}{\gamma_0^{p-1}}A_0^p.
\end{align*}
Finally
\begin{equation*}
\E G_1^p(y_t)+\frac{\alpha\gamma_0p}{2}\int_0^t\E G_1^p(y_s)ds\leq \E G_1^p(y_0)+\alpha \frac{2^{p}p^{p+1}}{\gamma_0^{p-1}}A_0^pt.
\end{equation*}
Thus we get $(\ref{Ito_est12})$ after applying the Gronwall lemma.
\end{proof}
\begin{prop}
Let $\alpha \in (0,1).$ Suppose  $\E G_2^{1^+}(y_0)<+\infty.$ Then the solution $y_t$ of $(\ref{ddKG})$ starting at $y_0$ satisfies the following
\begin{align}
\E G_2(y_t) &+\alpha\int_0^t\E L_2(y_s)ds \leq \E G_2(y_0)+ c_1\alpha\left(t+\int_0^t\E\|u\|_{L^6}^6ds\right),\label{Ito_est21}\\
\E G_2(y_t) &\leq e^{-c_2t}\E G_2(y_0)+c_3,\label{Ito_est22}
\end{align}
where $c_i, \ i=1,2,3$ are universal positive constants.
\end{prop}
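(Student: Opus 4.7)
The plan is to apply It\^o's formula to $G_2$ on the Gelfand triple $(\mathcal{H}^{2,0},\mathcal{H}^{2,1},\mathcal{H}^{2,2})$, in direct parallel with the treatment of $G_1$ in the previous proposition. First I would verify the hypothesis of Proposition \ref{chap5:cond_kS}: since $G_2(y)\geq E(y)/4$ by the inequality following $(\ref{G_1pgE1})$, the assumption $\E G_2^{1^+}(y_0)<\infty$ forces $\E E^{1^+}(y_0)<\infty$, so that $G_2(y_t)$ has finite quadratic variation on finite intervals and It\^o's formula applies at a deterministic time via Corollary A.7.6 of \cite{KS12}.

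The central computation is to evaluate the drift $\nabla_y G_2(y,g)$, with $g=[\dt u,\Delta_0 u-u^3+\alpha\Delta_0\dt u]$, together with the It\^o correction. Writing $G_2=E+J_\alpha$ with $J_\alpha=-\tfrac{\alpha}{2}(\dt u,\Delta_0 u)+\tfrac{\alpha^2}{4}\|u\|_2^2$ and expanding $\nabla_uJ_\alpha\cdot g_1$ and $\nabla_vJ_\alpha\cdot g_2$, one observes that the two $\alpha^2$-order terms $\tfrac{\alpha^2}{2}((-\Delta_0)^2u,\dt u)$ and $-\tfrac{\alpha^2}{2}(\Delta_0 u,\Delta_0\dt u)$ cancel identically; after simplification the drift reduces to
\begin{equation*}
\nabla_y G_2(y,g)=-\frac{\alpha}{2}\bigl(\|\dt u\|_1^2+\|u\|_2^2\bigr)+\frac{\alpha}{2}(\Delta_0 u,u^3).
\end{equation*}
For the It\^o correction, only the $\dt u$-component of $\hat{e}_m=[0,e_m]$ survives, so $\nabla_y^2 G_2(y;\hat{e}_m,\hat{e}_m)=\|e_m\|^2=1$, contributing $\tfrac{\alpha A_0}{2}$. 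The remaining cross term $(\Delta_0 u,u^3)$ is handled by Cauchy-Schwarz and Young: for any $\epsilon>0$, $|(\Delta_0 u,u^3)|\leq\|u\|_2\|u\|_{L^6}^3\leq\epsilon\|u\|_2^2+\tfrac{1}{4\epsilon}\|u\|_{L^6}^6$. Choosing $\epsilon$ so that $1-\epsilon=1^-$ matches the coefficient in $L_2$, integrating in time and taking expectations (the martingale part vanishing as in the $G_1$ computation) yields $(\ref{Ito_est21})$.

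For the exponential bound $(\ref{Ito_est22})$, I would rewrite $(\ref{ineq_G_2>half_E})$ as $L_2(y)\geq 4G_2(y)-5E(y)$, turning the dissipation $-\alpha L_2$ into a negative multiple of $G_2$ plus a controllable remainder. The resulting differential inequality
\begin{equation*}
\frac{d}{dt}\E G_2(y_t)+4\alpha\,\E G_2(y_t)\leq 5\alpha\,\E E(y_t)+c_1\alpha\,\E\|u_t\|_{L^6}^6+\frac{\alpha A_0}{2}
\end{equation*}
has a right-hand side that is bounded uniformly in $t$: the embedding $H^1\hookrightarrow L^6$ in dimension three gives $\E\|u_t\|_{L^6}^6\lleq\E\|u_t\|_1^6\lleq\E G_1^3(y_t)$, and $\E E(y_t)\leq 4\E G_1(y_t)$, both of which are controlled by $(\ref{Ito_est12})$ with $p=1$ and $p=3$. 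A Gronwall argument then delivers $(\ref{Ito_est22})$. The delicate point is to calibrate the Young inequality on $(\Delta_0 u,u^3)$ sharply enough that the coefficient on $\|u\|_2^2$ is strictly less than $1$, so the residual damping lands exactly inside $L_2$; everything else is a faithful adaptation of the $G_1$ argument.
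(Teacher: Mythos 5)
Your proposal is correct and follows essentially the same route as the paper: the splitting $G_2=E+J_\alpha$, the cancellation of the $\alpha^2$-order terms, the Young inequality on $(\Delta_0 u,u^3)$ with the It\^o correction $\tfrac{\alpha A_0}{2}$ giving $(\ref{Ito_est21})$, and then $L_2\ggeq G_2-E$ combined with the $G_1$-moment bounds $(\ref{Ito_est12})$ and Gronwall for $(\ref{Ito_est22})$. No substantive differences from the paper's argument.
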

We remark in $(\ref{Ito_est21})$ we need to control $\E\|u\|_{L^6}^6.$ But one can see that this control holds true if one combines the embedding $H^1\subset L^6$ and the estimate $(\ref{Ito_est12}).$
\begin{proof}
Set $J_\alpha(y)=-\frac{\alpha}{2}\int \dt u\Delta_0 u+\frac{\alpha^2}{4}\|u\|_2^2$ so that $G_2=E+J_\alpha.$ In order to apply the Itô formula used in the previous estimations, let us compute
\begin{align*}
\nabla_y G_2(y,dy) &=\nabla_yE+\nabla_u J_\alpha(y,g_1)+\nabla_{v}J_\alpha(y,g_2)+\Theta_2=:\nabla_yE+I+II+\Theta_2,
\end{align*}
where 
\begin{align*}
\Theta_2(t)=\sqrt{\alpha}\sum_{m\geq 0}a_m(\dt u-\frac{\alpha}{2}\Delta_0 u,e_m)d\beta_m(t).
\end{align*}
We see without any difficulty that
\begin{equation*}
\nabla_yE(y,g) =-\alpha\|\dt u\|_1^2.
\end{equation*}
On the other hand, we have
\begin{align*}
I &=\frac{\alpha}{2}\|\dt u\|_1^2+\frac{\alpha^2}{4}\dt\|u\|_2^2.\\
II &=-\frac{\alpha}{2}(\Delta_0 u,\Delta_0u-u^3+\alpha\Delta_0\dt u)\\
&=-\frac{\alpha}{2}\|u\|_2^2+\frac{\alpha}{2}(\Delta u,u^3)-\frac{\alpha^2}{4}\dt\|u\|_2^2\\
&\leq -\frac{\alpha}{2}(1-\epsilon)\|u\|_2^2+\frac{\alpha}{8\epsilon}\|u\|_{L^6}^6-\frac{\alpha^2}{4}\dt\|u\|_2^2.
\end{align*}
Summarizing all this, we have
\begin{align*}
\nabla_yG_2(y,dy)\leq-\frac{\alpha}{2}(\|\dt u\|_1^2+(1-\epsilon)\|u\|_2^2)+\frac{\alpha}{8\epsilon}\|u\|_{L^6}^6+\Theta_2,
\end{align*}
where $\Theta_2$ satisfies $\E\int_0^t\Theta_2(s)=0$ for any $t>0.$
Now
\begin{align*}
\nabla_y^2G_2(y,\hat{e}_m)=1,
\end{align*}
then for any $\epsilon \in (0,1),$ we have
\begin{align*}
\E G_2(y_t)+\frac{\alpha}{2}\int_0^t\E(\|\dt u\|_1^2+(1-\epsilon)\|u\|_2^2)ds &\leq \E G_2(y_0)\\
&+\frac{\alpha}{8\epsilon}\int_0^t\E\|u\|_{L^6}^6ds+\frac{\alpha}{2} A_0t,
\end{align*}
that is $(\ref{Ito_est21}).$ To prove $(\ref{Ito_est22}),$ we remark that 
\begin{align*}
L_2\ggeq G_2-E.
\end{align*}
Injecting that into $(\ref{Ito_est21})$, we find
\begin{equation*}
\E G_2(y)+c_2\alpha\int_0^t\E G_2(y_s)ds\leq \E G_2(y_0)+\alpha\int_0^t\E(E(u)+\|u\|_{L^6}^6)ds.
\end{equation*}
Then we use $(\ref{Ito_est12})$ and Gronwall lemma to conclude.
\end{proof}
From the estimate $(\ref{Ito_est21})$, we infer, for $y_0=0$ $a.e,$ that
\begin{equation}\label{est_H3}
\frac{1}{t}\int_0^t\E\|y_s\|^2_{\mathcal{H}^{2,1}}ds\leq C,
\end{equation}
where $C$ is independent of $t.$

\subsection{Step $2$: Stationary measures and their estimations}
\paragraph{\textbf{Existence of a stationary measure on $\mathcal{H}^{1,0}$ for any fixed $\alpha$.}}
Let $\delta_0$ be the Dirac measure on $\mathcal{H}^{1,0}$ concentrated at $0$. We define the time-averaged measures
\begin{equation}
\bar{\lambda}_T=\frac{1}{T}\int_0^T\q \delta_0 dt.\label{ito_recap}
\end{equation}
Let us fix $\alpha$. Now, we are going to establish (uniform) tighness of  the sequence $\{\bar{\lambda}_T,\ T>0\}$.
Let $R>0$ and $B_R$ be the ball of $\mathcal{H}^{2,1}$ of center $0$ and radius $R$, $B_R$ is compact in $\mathcal{H}^{1,0}$ and, thanks to $(\ref{est_H3})$, Chebychev inequality implies
\begin{equation*}
\bar{\lambda}_T(B_R)\geq 1-\frac{C}{R^2},
\end{equation*}
where $C$ is independent of $T$. Then $\{\bar{\lambda}_T,\ T\}$ is tight on $\mathcal{H}^{1,0}.$ By Prokhorov theorem, there is an accumulation point on $\mathcal{H}^{1,0}$, then the Bogoliubov-Krylov argument establishes that the latter is invariant for $(\ref{KG})$. We denote this stationary measure by $\mu_\alpha.$
\paragraph{Uniform (in $\alpha$) estimates for the measures $\mu_\alpha.$}
Now we prove the estimates $(\ref{estim_unif_L1}),$ $(\ref{estim_unif_Gp}),(\ref{estim_unif_H2H1}).$ Let $R$ be a positive number. Consider a $C^\infty-$function $\chi_R$ on $\R$ defined by
\begin{equation*}
\chi_R(x)=\left\{\begin{array}{l r c}
1, \ \ \text{if $x\leq R,$}\\
0, \ \ \text{if $x\geq R+1$}.
\end{array}
\right.
\end{equation*}
Let us prove $(\ref{estim_unif_Gp}),$ the proof of  $(\ref{estim_unif_H2H1})$  is similar and $(\ref{estim_unif_L1})$ follows the finiteness of $\E G_1(y)$ and $(\ref{Ito_est11})$. For any $p\geq 1,$ we have
\begin{equation}\label{trans_identity}
\int_{\mathcal{H}^{1,0}}G_1^p(y)\chi_R(\|y\|_{1,0})\mu_\alpha(dy)=\int_{\mathcal{H}^{1,0}}\E[G_1^p(y(t,v))\chi_R(\|y(t,v)\|_{1,0})]\mu_\alpha(dv).
\end{equation}
Passing to the limit $t\to\infty$ in $(\ref{trans_identity})$ with use of $(\ref{Ito_est12})$, we arrive at
$$\int_{\mathcal{H}^{1,0}}G_1^p(y)\chi_R(\|y\|_{1,0})\mu_\alpha(dy)\leq 3p^pA_0^p,$$
it remains to apply the Fatou lemma to finish. Now the control on the $\mathcal{H}^{2,1}-$ norm implies
\begin{align*}
\mu_\alpha(\mathcal{H}^{2,1})=1.
\end{align*}
\section{Invariant measure for KG and estimates}
\begin{thm}\label{Thm_sans_alpha}
There is an accumulation point $\mu$ of $\{\mu_\alpha\}$ as $\alpha\to 0$, in the weak topology of $\mathcal{H}^{1,0}$, satisfying the following properties:
\begin{enumerate}
\item $\mu$ is invariant under the flow of the KG equation $(\ref{KG})$ defined on $\mathcal{H}^{1,0};$
\item \begin{equation}
\mu (\mathcal{H}^{2,1})=1;\label{inviscid_concent}
\end{equation}
\item 
 \begin{align}
\int_{\mathcal{H}^{1,0}} \|y\|_{\mathcal{H}^{2,1}}^2\mu(dy) &<+\infty;\label{inviscid_normH2H1}
\end{align}
\item for any $p\geq 1$,
\begin{align}
\int_{\mathcal{H}^{1,0}} E^p(y)\mu(dy) \leq 2\left(\frac{2pA_0}{\gamma_0}\right)^p,\label{inviscid_Ep}
\end{align}
where $A_0=\sum_{m=0}^\infty a_m^2$ and $\gamma_0=\frac{2\min(1,\ m_0^2+\lambda_0)}{2+m_0^2+\lambda_0}.$
\end{enumerate}
\end{thm}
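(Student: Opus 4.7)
The plan is to realize $\mu$ as a weak accumulation point of the family $\{\mu_\alpha\}_{\alpha\in(0,1)}$ in $\mathcal{H}^{1,0}$ and to transfer the invariance and the moment bounds by a combination of weak convergence, lower semicontinuity, and a quantitative control of the inviscid limit $\phi_t^\alpha\to\phi_t$ acting on the initial datum.

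\textbf{Tightness and extraction of a limit.} First I would show that $\{\mu_\alpha\}$ is tight on $\mathcal{H}^{1,0}$: the embedding $\mathcal{H}^{2,1}\hookrightarrow\mathcal{H}^{1,0}$ is compact by Rellich, and the uniform bound $(\ref{estim_unif_H2H1})$ together with Chebyshev give $\mu_\alpha(\{\|y\|_{2,1}\leq R\})\geq 1-C/R^2$ with $C$ independent of $\alpha$. Prokhorov's theorem then yields a sequence $\alpha_k\to 0$ and a probability measure $\mu$ on $\mathcal{H}^{1,0}$ with $\mu_{\alpha_k}\rightharpoonup\mu$.

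\textbf{Invariance under the KG flow.} For $f\in C_b(\mathcal{H}^{1,0})$ and $t>0$, the invariance of $\mu_{\alpha_k}$ reads $\int f\,d\mu_{\alpha_k}=\int \E f(\phi_t^{\alpha_k}y)\,\mu_{\alpha_k}(dy)$. The left side converges to $\int f\,d\mu$ by weak convergence. For the right side, I would rely on the wellposedness of $(\ref{KG})$ on $\mathcal{H}^{1,0}$ in the defocusing cubic regime (standard energy method) to give continuity of $\phi_t$, and establish the crucial \emph{inviscid limit}: for any $y_0$ in a bounded set of $\mathcal{H}^{2,1}$,
\[
\E\,\|\phi_t^\alpha y_0-\phi_t y_0\|_{1,0}^2\longrightarrow 0\quad\text{as }\alpha\to 0,
\]
uniformly on such sets. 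This is obtained by writing $u^\alpha-u=(v^\alpha-u)+z_\alpha$ in the splitting of Step 1 of the proof of Proposition \ref{KG_stoch_WP}, controlling the stochastic convolution $z_\alpha$ via Proposition \ref{chap_5: exp_cont_lin}, and running an energy estimate on the deterministic difference $v^\alpha-u$ in which the cubic difference is absorbed using the a priori $\mathcal{H}^{1,0}$-bounds that Proposition \ref{ito_sur_functionals} provides uniformly in $\alpha$. Combined with the uniform tightness of $\{\mu_{\alpha_k}\}$ on sublevel sets of $\|\cdot\|_{2,1}$, a cutoff argument on $\|y_0\|_{2,1}$ and the boundedness of $f$ let me pass to the limit in the right-hand side and identify it with $\int \E f(\phi_ty)\,\mu(dy)=\int f\,d\mu$, which is the required invariance.

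\textbf{Moment and support properties.} The maps $y\mapsto\|y\|_{2,1}^2$ and $y\mapsto E(y)$ are lower semicontinuous on $\mathcal{H}^{1,0}$ (each is an $\ell^2$- or $L^4$-supremum of continuous linear/polynomial functionals). Approximating them from below by bounded continuous truncations $\chi_R$ and applying weak convergence, then monotone convergence as $R\to\infty$, Fatou's lemma applied to $(\ref{estim_unif_H2H1})$ gives $(\ref{inviscid_normH2H1})$, which immediately forces $\mu(\mathcal{H}^{2,1})=1$. For $(\ref{inviscid_Ep})$, the explicit formula for $G_1$ together with $(\ref{G_1pgE1})$ gives the pointwise comparison $E(y)\leq G_1(y)\bigl(1+O(\alpha)\bigr)$ uniformly on $\mathcal{H}^{1,0}$, hence $(\ref{estim_unif_Gp})$ yields
\[
\int E^p(y)\,\mu_{\alpha_k}(dy)\leq \bigl(1+O(\alpha_k)\bigr)^p\Bigl(\tfrac{2pA_0}{\gamma_0}\Bigr)^p,
\]
and Fatou produces the claimed bound, the factor $2$ absorbing the $(1+O(\alpha_k))^p$ prefactor for $k$ large.

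\textbf{Main obstacle.} The qualitatively new point is the inviscid-limit step used to transfer invariance. Unlike the Kuksin--Shirikyan framework where two coercive conservation laws allow uniform $\mathcal{H}^{2,1}$-control through simple energy estimates, here the only uniform estimate on $(\ref{ddKG})$ above the energy level comes from the almost conservation laws $G_1,G_2$, and controlling the cubic difference $(u^\alpha)^3-u^3$ in the equation for $v^\alpha-u$ requires simultaneously the stationary $\mathcal{H}^{2,1}$-bound from Theorem \ref{Thm_alpha} and the exponential control on the stochastic convolution from Proposition \ref{chap_5: exp_cont_lin}. Assembling these two ingredients in a way that is genuinely uniform in $\alpha$ is the heart of the argument; the remaining steps are essentially compactness and lower semicontinuity.
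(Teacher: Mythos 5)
Your proposal is correct and follows essentially the same route as the paper: tightness from the uniform $\mathcal{H}^{2,1}$-moment bound plus Prokhorov, invariance via passing to the limit in $(\mathfrak{P}_t^{\alpha_k*}\mu_{\alpha_k},f)=(\mu_{\alpha_k},f)$ using the splitting $u^{\alpha}=v^{\alpha}+z_{\alpha}$, an energy estimate on $v^{\alpha}-u$ on the event where $z_{\alpha}$ is small, and cutoffs in $\|y_0\|_{2,1}$, and the moment bounds by truncation, weak convergence and Fatou. Your observation that $E\leq(1+O(\alpha))G_1$ (rather than the cruder $E\leq 4G_1$ from $(\ref{G_1pgE1})$) is in fact the detail needed to obtain the stated constant $2$ in $(\ref{inviscid_Ep})$, a point the paper leaves implicit.
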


In what follows $B_R$ denotes the ball in $\mathcal{H}^{2,1}$ centred at zero and of radius $R$, unless otherwise specified. $\phi_t^\alpha$ and $\phi_t$ denote respectively the flows of $(\ref{ddKG})$ and $(\ref{KG})$ on $\mathcal{H}^{1,0}.$ The Markov semi-groups  associated to $(\ref{KG})$ are denoted by $
\mathfrak{P}_t$ and
$\mathfrak{P}^*_t$.

For $w\in \mathcal{H}^{1,0},$ $v(t,w)$ and $u(t,w)$ are the corresponding solutions to the nonlinear equation $(\ref{nonlinear_eq})$ and the KG equation $(\ref{KG})$. Set  $f=v-u,$ then $f|_{t=0}=0.$ We have
\begin{nem} Let $T,$ $R$ and $r$ be positive numbers, we have
\begin{equation}\label{Convergence}
\sup_{t\in[0,T]}\sup_{w\in B_R}\E\left[E(f,\dt{f})\Bbb 1_{\{\|z_\alpha\|_{L^\infty_tH^2}\leq \sqrt{\alpha}r\}}\right]=O_{T,R,r}(\alpha).
\end{equation}
\end{nem}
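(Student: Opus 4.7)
Subtracting \eqref{KG} from \eqref{nonlinear_eq}, the difference $f = v - u$ satisfies
$$\ds f - \Delta_0 f + (v+z_\alpha)^3 - u^3 = \alpha \Delta_0 \dt v, \qquad f|_{t=0} = \dt f|_{t=0} = 0.$$
Since $z_\alpha(\omega)$ is pathwise fixed, everything that follows is a deterministic estimate for each $\omega$ on the good event. I work with the natural energy
$$E(f, \dt f) = \tfrac{1}{2}\|[f, \dt f]\|_{1,0}^2 + \tfrac{1}{4}\int f^4.$$
Setting $g := f + z_\alpha$, the expansion $(v+z_\alpha)^3 - u^3 = 3u^2 g + 3u g^2 + g^3$ contains a $-f^3$ that cancels exactly the $+f^3$ produced by $\tfrac{d}{dt}\tfrac{1}{4}\int f^4$, so that
$$\dt E(f, \dt f) = -3(\dt f, u^2 f + u f^2) - (\dt f, \Xi) + \alpha(\dt f, \Delta_0 \dt v),$$
where $\Xi = 3u^2 z_\alpha + 6u f z_\alpha + 3u z_\alpha^2 + 3 f^2 z_\alpha + 3 f z_\alpha^2 + z_\alpha^3$ collects the six cross terms containing at least one factor of $z_\alpha$.

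\textbf{A priori bounds.} Because $w \in B_R \subset \mathcal{H}^{2,1}$ and $\mathcal{H}^{2,1}$-regularity propagates along \eqref{KG} in $3D$ (the nonlinearity is subcritical and $H^2$ is an algebra), $\|u\|_{L^\infty(0,T;\mathcal{H}^{2,1})} \leq C(R,T)$; in particular $\|u\|_{L^\infty_{t,x}} + \|\dt u\|_{L^\infty_t H^1} \leq C(R,T)$ by Sobolev embedding. On $\{\|z_\alpha\|_{L^\infty_t H^2} \leq \sqrt{\alpha}\,r\}$, the same embedding gives $\|z_\alpha\|_{L^\infty_{t,x}} \leq C\sqrt{\alpha}\,r$, and feeding $f := z_\alpha$ into Proposition \ref{prop_estimat_genral} yields a pathwise bound $E(v_t, \dt v_t) \leq C(R,T,r)$, whence $\|[f, \dt f]\|_{1,0}^2 + \|f\|_{L^4}^4 \leq C(R,T,r)$.

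\textbf{Gronwall closure.} The two ``pure'' terms $(\dt f, u^2 f)$ and $(\dt f, u f^2)$ are bounded by $C(R,T)\, E(f, \dt f)$ via Cauchy--Schwarz, using the $L^\infty$-bound on $u$ and $H^1 \hookrightarrow L^4$. Each of the six terms of $\Xi$ is split by Cauchy--Schwarz with all factors of $z_\alpha$ placed on the ``source'' side, so that $\|z_\alpha\|_{L^\infty}^2 \leq C \alpha r^2$ is what appears: the three $z_\alpha$-linear terms each contribute $C(R,T,r)\, E(f,\dt f) + C(R,T,r)\, \alpha$, and the higher-order $z_\alpha$-terms are $O(\alpha^j)$ with $j \geq 1$. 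The dissipative contribution is controlled by writing $\dt v = \dt f + \dt u$:
$$\alpha(\dt f, \Delta_0 \dt v) = -\alpha \|\dt f\|_1^2 + \alpha(\dt f, \Delta_0 \dt u) \leq -\tfrac{\alpha}{2}\|\dt f\|_1^2 + \tfrac{\alpha}{2}\|\dt u\|_1^2 \leq C(R,T)\, \alpha.$$
Combining everything, on the good event the pathwise inequality
$$\dt E(f, \dt f) \leq C(R,T,r)\, E(f, \dt f) + C(R,T,r)\, \alpha$$
holds. Since $E(f, \dt f)|_{t=0} = 0$, Gronwall gives $E(f, \dt f)(t) \leq C(R,T,r)\, \alpha$ for every $t \in [0,T]$ and $w \in B_R$; multiplying by the indicator of the good event and taking expectation yields the claimed $O_{T,R,r}(\alpha)$ bound.

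\textbf{Main difficulty.} The critical point is to extract a genuine $O(\alpha)$ from each cross term in $\Xi$ rather than an $O(\sqrt{\alpha})$: the Cauchy--Schwarz split must always place $z_\alpha$ on the squared side, converting $\|z_\alpha\|_{L^\infty} = O(\sqrt{\alpha})$ into $\|z_\alpha\|_{L^\infty}^2 = O(\alpha)$. A convenient secondary point is that the cubic-in-$f$ contribution cancels automatically thanks to the choice of the full nonlinear energy $E(f, \dt f)$ (rather than the quadratic $\tfrac{1}{2}\|[f, \dt f]\|_{1,0}^2$); without this cancellation one would have to carry an additional control on $f$ in a stronger norm.
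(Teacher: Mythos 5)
Your proof is correct and follows essentially the same route as the paper: subtract the two equations, run an energy estimate on $E(f,\dt f)$ (with the same cancellation of the cubic term), convert the $O(\sqrt{\alpha})$ smallness of $z_\alpha$ on the good event into an $O(\alpha)$ source via Young's inequality, and close with Gronwall from zero initial data. If anything your treatment is slightly tighter at two points where the paper is terse: you exploit the sign of $-\alpha\|\dt f\|_1^2$ to dispose of the damping term (the paper just writes $\alpha\|\dt f\|_1\|\dt v\|_1$), and your Gronwall coefficient is a genuine constant $C(R,T,r)$ rather than the paper's $1+L_2(q)$, whose time integral is only controlled after division by $\alpha$ in Proposition \ref{prop_estimat_genral}.
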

\begin{proof} 
Consider the following equations
\begin{align*}
\ds v-\Delta_0v+v^3 &=\alpha\Delta_0\dt v-3v^2z_\alpha-3vz_\alpha^2-z_\alpha^3,\\
\ds u-\Delta_0u+u^3 &=0.
\end{align*}
Taking the difference between the above two equations, we get
\begin{align*}
\ds f-\Delta_0 f+f^3 &=\alpha\Delta_0\dt v+3u^2v-3v^2u-3v^2z_\alpha-3vz_\alpha^2-z_\alpha^3\\
&=\alpha\Delta_0\dt v-3uvf-3v^2z_\alpha-3vz_\alpha^2-z_\alpha^3.
\end{align*}
Let $t\in [0,T]$ and $w\in B_R.$ Thanks to the preservation of $E$ by the solution of $(\ref{KG})$ and the results of Proposition $\ref{prop_estimat_genral}$,  we get
\begin{align*}
\dt E(f,\dt{f}) &=-3(f\dt f,uv)+\alpha(\dt f, \Delta_0\dt v)-(\dt f,3v^2z_\alpha+3vz_\alpha^2+z_\alpha^3)\\
&\lleq \|\dt f\|\|f\|_{L^4}\|uv\|_{L^4}+\alpha\|\dt f\|_1\|\dt v\|_1+\|\dt f\|O_{R,r}(\sqrt{\alpha})\\
&\lleq (\|\dt f\|^2+\|f\|_{L^4}^2)[\|u\|_{L^4}\|v\|_2+1]+O_{R,r}(\alpha)\\
&\lleq E(f,\dt{f})[\|u\|_{L^4}^2+\|v\|_2^2+1]+O_{R,r}(\alpha)\\
&\lleq_{T,R,r} (1+L_2(q))\ E(f,\dt{f})+\alpha.
\end{align*}
Now Proposition $\ref{prop_estimat_genral}$ ensures the boundedness of $\int_0^tL_2(q)ds$ where $q=[v,\dt{v}].$
We apply Gronwall lemma to get the claimed result.
\end{proof}
Since for $a.a.$ $\omega$ $z_\alpha$ converges to $0$ as $\alpha\to 0$, we have
\begin{cor}\label{coro}
For any $T,R,r>0$ and almost all $\omega\in\Omega$,
\begin{equation*}
\sup_{t\in[0,T]}\sup_{w\in B_R}\E\left[\|\phi_t^\alpha w-\phi_t w\|_{\mathcal{H}^{1,0}}^21_{\{\|z_\alpha\|_{L^\infty_tH^2}\leq \sqrt{\alpha}r\}}\right]=O_{T,R,r}(\alpha).
\end{equation*}
\end{cor}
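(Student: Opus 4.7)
The plan is to deduce the corollary directly from the preceding lemma by unwinding the splitting $u_\alpha=v+z_\alpha$ used in Proposition \ref{KG_stoch_WP}. Since $\phi_t^\alpha w=[v+z_\alpha,\dt v+\dt z_\alpha]$ and $\phi_t w=[u,\dt u]$ are both launched from the same deterministic datum $w\in B_R$, setting $f=v-u$ as in the lemma gives
$$
\phi_t^\alpha w-\phi_t w=[f,\dt f]+[z_\alpha,\dt z_\alpha],
$$
hence
$$
\|\phi_t^\alpha w-\phi_t w\|_{1,0}^2\le 2\|[f,\dt f]\|_{1,0}^2+2\|[z_\alpha,\dt z_\alpha]\|_{1,0}^2.
$$
I would then estimate each of the two pieces separately under the indicator $\mathbf 1_{\{\|z_\alpha\|_{L^\infty_t H^2}\le\sqrt\alpha r\}}$.

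For the first piece, the nonnegativity of the quartic part in the definition of $E$ gives $\|[f,\dt f]\|_{1,0}^2\le 2E(f,\dt f)$, so the preceding lemma immediately yields
$$
\sup_{t\in[0,T]}\sup_{w\in B_R}\E\bigl[\|[f,\dt f]\|_{1,0}^2\,\mathbf 1_{\{\|z_\alpha\|_{L^\infty_t H^2}\le\sqrt\alpha r\}}\bigr]=O_{T,R,r}(\alpha).
$$
For the second piece, I would use that on the indicator event the stochastic convolution $[z_\alpha,\dt z_\alpha]$ is controlled in $\mathcal H^{2,1}$ by $C\sqrt\alpha r$ (this is the natural pairing already used inside the lemma's proof to produce the $O_{R,r}(\sqrt\alpha)$ terms), so the embedding $\mathcal H^{2,1}\hookrightarrow\mathcal H^{1,0}$ gives $\|[z_\alpha,\dt z_\alpha]\|_{1,0}^2\le C\alpha r^2$ pointwise on that event, which yields the same $O_{T,R,r}(\alpha)$ bound after taking expectation. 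Summing the two contributions closes the argument.

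The only real obstacle is a notational check: one must verify that the indicator $\{\|z_\alpha\|_{L^\infty_t H^2}\le\sqrt\alpha r\}$ really forces both components of the pair $[z_\alpha,\dt z_\alpha]$ to be $O(\sqrt\alpha)$ in the relevant norms. This is consistent with the usage of the same indicator in the preceding lemma; if one wants to be scrupulous one can split the expectation into the intersection of this event with $\{\|\dt z_\alpha\|\le\sqrt\alpha r\}$ and its complement, the latter having arbitrarily small probability uniformly in $\alpha$ thanks to the moment bound $(\ref{chap5_estim_power,z})$ on the stochastic convolution. No new analytic estimate beyond the lemma and the Section~5 control of $[z_\alpha,\dt z_\alpha]$ is needed.
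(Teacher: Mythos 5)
Your overall route coincides with the paper's: the corollary is deduced from the preceding lemma via exactly the decomposition $\phi_t^\alpha w-\phi_t w=[f,\dt f]+[z_\alpha,\dt z_\alpha]$ (the paper's one-line justification is the lemma together with the almost sure convergence $z_\alpha\to 0$ as $\alpha\to 0$), and your treatment of the first piece, through $\|[f,\dt f]\|_{1,0}^2\le 2E(f,\dt f)$ and the lemma, is correct and identical in spirit.

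The genuine weak point is the second component $\dt z_\alpha$, which the indicator $\{\|z_\alpha\|_{L^\infty_tH^2}\le\sqrt\alpha r\}$ does not control, and your proposed repair does not work as stated. You invoke the moment bound $(\ref{chap5_estim_power,z})$ to claim that the complementary event $\{\|\dt z_\alpha\|\le\sqrt\alpha r\}^c$ has arbitrarily small probability uniformly in $\alpha$; but that bound, $\E\|[z,\dt z]\|_{2,1}^{2p}\le(A_1p/\kappa)^p$, is uniform in $\alpha$ without decaying in $\alpha$, so Chebyshev against the shrinking threshold $\sqrt\alpha r$ gives $\P(\|\dt z_\alpha\|_1>\sqrt\alpha r)\lleq(\alpha r^2)^{-1}$, which blows up as $\alpha\to 0$ rather than becoming small. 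Moreover, even a genuinely small probability would not by itself convert $\E\bigl[\|\phi_t^\alpha w-\phi_t w\|_{1,0}^2\Bbb 1_{\mathrm{complement}}\bigr]$ into an $O(\alpha)$ quantity; one would need an $O(\alpha^2)$ probability combined with a fourth-moment bound via Cauchy--Schwarz. The clean repair is to drop the indicator entirely for this piece: on the finite horizon $[0,T]$ the representation $[z_\alpha,\dt z_\alpha](t)=\sqrt\alpha\int_0^tS(t-s)d\hat\zeta(s)$, together with the It\^o isometry, Doob's maximal inequality and the uniform boundedness of the dissipative semigroup $S$, yields $\E\sup_{t\le T}\|[z_\alpha,\dt z_\alpha]\|_{2,1}^2\le C_TA_1\alpha$, so the second piece is $O_T(\alpha)$ with no event restriction. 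Alternatively, read the indicator as constraining the full pair $[z_\alpha,\dt z_\alpha]$ in $\mathcal H^{2,1}$, which is consistent with how the same event is used afterwards in the proof of Theorem $\ref{Thm_sans_alpha}$ (the term $I_1^2$ is discarded by Chebyshev before letting $r\to\infty$). With either repair your argument closes.
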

\begin{proof}[Proof of Theorem $\ref{Thm_sans_alpha}$]
The family  $\{\mu_\alpha\}$ is tight on $\mathcal{H}^{1,0}$ $w.r.t.$ $\alpha$ by $(\ref{estim_unif_H2H1}),$ then passing to a subsequence, we have a limiting measure $\mu.$ In what follow the subscript $k$ is related to $\alpha_k$, the $k$th term of the above subsequence.
\begin{enumerate}
\item \textbf{Estimates for the inviscid limit.}
Let $\chi_R$ be a bump function on $[0,R]$ for $R>0$. By $(\ref{estim_unif_L1})$, we have
\begin{equation*}
\int_{\mathcal{H}^{1,0}}L_1(y)\chi_R(\|y\|_{1,0})\mu_k(dy)\leq \frac{A_0}{2}.
\end{equation*}
We pass to the limits $k\to\infty$, $R\to\infty$ (in this order, with the use of Fatou's lemma in the second limit), we get
\begin{equation}
\int_{\mathcal{H}^{1,0}}L_1(y)\mu(dy)\leq \frac{A_0}{2}.\label{equal_partiel}
\end{equation}
A similar procedure applied to $(\ref{estim_unif_H2H1})$ and $(\ref{estim_unif_Gp})$ gives $(\ref{inviscid_normH2H1})$ and $(\ref{inviscid_Ep})$. And $(\ref{inviscid_normH2H1})$ implies $(\ref{inviscid_concent}).$ 
\item \textbf{Inviscid limit and its invariance under KG.}
The following diagram is the general scheme of the proof. 
$$
\hspace{10mm}
\xymatrix{
  \d\mu_{k} \ar@{=}[r]^{(I)} \ar[d]^{(III)} & \mu_{k} \ar[d]^{(II)} \\
    \mathfrak{P}_t^*\mu \ar@{=}[r]^{(IV)} & \mu
  }
$$
The point $(I)$ is just the invariance of $\mu_k$ under $\phi_t^k.$ The point $(II)$ is in weak sense. The point $(IV)$ follows immedialety $(III).$ Let us, then, prove $(III)$. Let $f$ be a bounded Lipshitz function on $\mathcal{H}^{1,0}$, suppose, without loss of generality, that $f$ is bounded by $1$ and denote by $C_f$ its Lipschitz constant. We have
\begin{align*}
(\d\mu_k,f)-(\mathfrak{P}^*_{t}\mu,f)&=(\mu_k,\b f)-(\mu,\mathfrak{P}_tf)\\
&=(\mu_k,\b f-\mathfrak{P}_tf)-(\mu-\mu_k,\mathfrak{P}_tf)\\
&=A+B.
\end{align*}

By weak convergence of $\mu_k$ towards $\mu$ as $k\to\infty$, we have that $B\to 0$ as $k\to \infty.$ Now since the measures $\mu_k$ and $\mu$ are concentrated on $\mathcal{H}^{2,1},$ we can restrict the integrals on this space. 
\begin{align*}
|A|&\leq \int_{B_R}\E|f(\phi_t^kw)-f(\phi_tw)|\mu_k(dw)+\int_{\mathcal{H}^{2,1}\backslash B_R}\E|f(\phi_t^kw)-f(\phi_tw)|\mu_k(dw)\\
&=I_1+I_2.
\end{align*}
Recalling that $f$ is bounded by $1$, we use Chebyshev inequality to find
\begin{equation*}
I_2\leq \frac{C}{R^2}.
\end{equation*}
Now
\begin{align*}
I_1 &=\int_{B_R}\E\left[|f(\phi_t^kw)-f(\phi_tw)|\Bbb 1_{\{\|z_\alpha\|_{L^\infty_tH^2}\leq r\sqrt{\alpha}\}}\right]\mu_k(dw)\\ &+\int_{B_R}\E\left[|f(\phi_t^kw)-f(\phi_tw)|\Bbb 1_{\{\|z_\alpha\|_{L^\infty_tH^2}> r\sqrt{\alpha}\}}\right]\mu_k(dw)\\
&=I_1^1+I_1^2.
\end{align*}
By Chebyshev inequality we get
\begin{equation*}
I_1^2\leq \frac{C}{r^2}.
\end{equation*}
Recall that $f$ is Lipschitz, then using  Corollary $\ref{coro}$ we find
\begin{align*}
I_1^1\leq C_f\sup_{w\in B_R}\E\left[\|\phi_t^kw-\phi_tw\|\Bbb 1_{\{\|z_\alpha\|_{L^\infty_tH^2}\leq r\sqrt{\alpha_k}\}}\right]\leq C_{f,R,r}\sqrt{\alpha_k}.
\end{align*}
Now take in the good order the limits $k\to \infty$, $r,R\to\infty$ to finish the argument.
\end{enumerate}
The proof is complete.
\end{proof}

\section{Qualitative properties for the distribution of the Hamiltonian}
The proof of Theorem $\ref{densit}$ below is inspired by the method developped in $\cite{kuksin_nondegeul,armen_nondegcgl,KS12}$, however the general argument is modified because of the lack of conservation laws present in our situation. In the case of the Schrödinger and Euler equations $\cite{armen_nondegcgl,KS12}$, the combination of two conservation laws allowed to control the measure uniformly arround zero, such a control was a useful step in the proof of some absolute continuity properties whose strategy relies in part on a spliting argument. Here, without such uniform control around zero, we show that the final conclusion is still true by using furthermore an approximation argument. 

\begin{thm}\label{densit}
Suppose $a_m\neq 0$ for any $m\geq 0$, then the distribution of the Hamiltonian $E(y)$ under $\mu$ has a density w.r.t. the Lebesgue measure on $\R.$ 
\end{thm}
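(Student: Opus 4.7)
The natural strategy is to first establish absolute continuity of $E_*\mu_\alpha$ at the stochastic level for every fixed $\alpha\in(0,1)$, and then transfer the property to the limit measure $\mu$ via the approximation mechanism announced in the theorem's preamble.

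For fixed $\alpha$, invariance of $\mu_\alpha$ under $\phi_t^\alpha$ means that if $y_0\sim\mu_\alpha$ then $E(y_t)\sim E_*\mu_\alpha$ for every $t>0$. Applying the It\^o formula (already justified in the proof of Proposition $\ref{ito_sur_functionals}$) to the energy $E$ gives
\begin{equation*}
dE(y_t)=\Bigl(\frac{\alpha}{2}A_0-\alpha\|\dt u_t\|_1^2\Bigr)dt+\sqrt{\alpha}\sum_{m\geq 0}a_m(\dt u_t,e_m)\,d\beta_m(t),
\end{equation*}
so $E(y_t)$ is a continuous real semi-martingale whose quadratic variation equals $\alpha\int_0^t\sum_m a_m^2(\dt u_s,e_m)^2\,ds$. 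Since every $a_m\neq 0$ and $\{e_m\}$ is an $L^2$-basis, this bracket is strictly positive with full probability as soon as $\dt u$ is not identically zero on $[0,t]$. Following the scheme of $\cite{armen_nondegcgl,KS12}$, the plan is to split $u=v+z_\alpha$ as in Section $\ref{sect_prv}$, freeze $\mathcal{F}_{t-\tau}$ for a small $\tau>0$, and exploit the independence of the Gaussian increment of $z_\alpha$ on $[t-\tau,t]$ from the past to run a Bouleau-Hirsch / Girsanov type argument on the conditional law of $E(y_t)-E(y_{t-\tau})$ given $\mathcal{F}_{t-\tau}$. Integrating the resulting conditional density against $\mu_\alpha$ yields $E_*\mu_\alpha\ll dx$ with some density $\rho_\alpha$.

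The second and more delicate stage is the passage to the inviscid limit. Weak convergence $E_*\mu_{\alpha_k}\rightharpoonup E_*\mu$ is immediate from the continuity of $E:\mathcal{H}^{1,0}\to\R$ and the weak convergence $\mu_{\alpha_k}\to\mu$, but absolute continuity is not preserved under weak limits in general. Because we lack the second coercive conservation law which, in $\cite{armen_nondegcgl,KS12}$, provided a uniform bound of $\rho_\alpha$ near the origin, the plan is to argue localized on $[\varepsilon,+\infty)$ for arbitrary $\varepsilon>0$: establish a uniform-in-$\alpha$ regularity estimate (for instance a bounded total variation bound) on the restriction of $\rho_\alpha$ to $[\varepsilon,\infty)$ using the moment controls $(\ref{estim_unif_Gp})$--$(\ref{estim_unif_H2H1})$ together with the It\^o-smoothing argument of the previous paragraph, then conclude by a Helly-type compactness statement that $E_*\mu$ is absolutely continuous on $[\varepsilon,\infty)$. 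Letting $\varepsilon\to 0$ leaves only the possibility of an atom at $\{E=0\}$, which reduces to excluding $\mu(\{0\})>0$; this is incompatible with the positivity $\int L_1\,d\mu\leq A_0/2$ coming from $(\ref{equal_partiel})$ and the fact that $L_1$ vanishes only at $y=0$.

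The main obstacle I expect is the uniform-in-$\alpha$ regularity estimate on $\rho_\alpha$ away from zero. In $\cite{armen_nondegcgl,KS12}$ the corresponding bound is produced using the interplay of two conservation laws; here only $E$ is conserved, so the quantitative density estimate must be recovered from the almost conservation laws $G_1,G_2$ of Section $\ref{sect3}$ and their dissipation rates $L_1,L_2$, combined with the exponential control $(\ref{chap5_est_exp_z})$ of the stochastic convolution. The balance between the noise strength $\sqrt{\alpha}$ and the damping strength $\alpha$ appearing in any explicit density bound has to be handled carefully; once this uniform estimate is secured, the remaining steps (weak compactness of BV-bounded families and the atom-at-zero exclusion) are routine.
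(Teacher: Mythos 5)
Your overall architecture --- prove everything at the level of the stationary measures $\mu_\alpha$ with constants uniform in $\alpha$, then transfer to $\mu$ --- is the paper's, but there is a genuine gap exactly at the point you yourself flag as ``the main obstacle I expect'': the uniform-in-$\alpha$ quantitative estimate is never produced, and the routes you sketch would not produce it. A Girsanov/Bouleau--Hirsch argument on the conditional law of $E(y_t)-E(y_{t-\tau})$ may give, for each fixed $\alpha$, some density $\rho_\alpha$, but any bound obtained that way degenerates as $\alpha\to 0$ because the noise entering $E$ has intensity $\sqrt{\alpha}$; your Helly/BV compactness step then has nothing to act on. The missing idea is more elementary and exploits a cancellation: apply the It\^o formula to $\Phi_\lambda\circ E$, where $\Phi_\lambda$ solves $-\Phi_\lambda''+\lambda\Phi_\lambda=h$ with $h\in C_0^\infty(\R)$, take expectation against the stationary $\mu_\alpha$ so that the time derivative vanishes, and let $\lambda\to 0$. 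Since the drift of $E(y_t)$ and its quadratic variation carry the \emph{same} factor $\alpha$, that factor cancels in the resulting balance relation, and the uniform bound $\int L_1\,d\mu_\alpha=A_0/2$ of $(\ref{estim_unif_L1})$ then yields
\begin{equation*}
\E_{\mu_\alpha}\Bigl[\Bbb 1_{\Gamma}(E)\sum_{m}a_m^2(\dt u,e_m)^2\Bigr]\leq C\,l(\Gamma)
\end{equation*}
with $C$ independent of $\alpha$. Combined with the lower bound for $\sum_m a_m^2(\dt u,e_m)^2$ on the sets $\{\|\dt u\|\geq\epsilon,\ \|\dt u\|_1\leq R\}$ (this is where $a_m\neq 0$ enters, via truncation to the first $N$ modes) and a Chebyshev bound for the complement, this gives an estimate of the form $\mu_\alpha(E\in\Gamma)\lleq l(\Gamma)+R^{-2}$ on the nondegenerate region, uniformly in $\alpha$, which is exactly what the Portmanteau theorem needs to transfer absolute continuity to $\mu$. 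No density bound on $\rho_\alpha$, no BV estimate and no Helly argument are required.

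A second, independent flaw is your exclusion of an atom at the origin. The relation $(\ref{equal_partiel})$, $\int L_1\,d\mu\leq A_0/2$, is an \emph{upper} bound: it is perfectly compatible with $\mu=\delta_0$ (for which $\int L_1\,d\mu=0$), so it cannot rule out $\mu(\{0\})>0$. The equality $\int L_1\,d\mu_\alpha=A_0/2$ holds only for $\alpha>0$, and only the inequality survives the inviscid limit. The paper handles this degenerate region again at the level of $\mu_\alpha$: it writes the projected equation for $y_m=[u_m,\dt u_m]$ along a single mode $e_m$ and runs the same balance-type argument for that one-dimensional observable, using $(\ref{estim_unif_Gp})$ and $(\ref{estim_unif_H2H1})$ to control the drift uniformly in $\alpha$, concluding that $\P(u_m=0)=0$. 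You should either adopt that route or replace your $L_1$ argument by one that is genuinely uniform in $\alpha$.
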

Before proving the above result let us prove some balance type relations. For a continuous function $h:\R\to\R,$ set
\begin{equation*}
H(x)=\int_0^xh(r)dr.
\end{equation*}
\begin{prop}
Let $\alpha \in (0,1)$ and $\mu_\alpha$ the invariant measure constructed for the problem $(\ref{ddKG})$. Let $h\in C_0^\infty(\R)$, we have
\begin{equation*}
\E_{\mu_\alpha}\left[H(E)(A_0- \|\dt u\|_1^2)\right]+\frac{1}{2}\E_{\mu_\alpha}\left[h(E)\sum_{m}a_m^2(\dt u,e_m)^2\right]=0,
\end{equation*}
where $\E_\mu$ denotes integral with respect to $\mu$ and $E$ is the Hamiltonian of the Klein-Gordon equation $(\ref{KG}).$
\end{prop}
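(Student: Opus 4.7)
The plan is to apply It\^o's formula to a suitable smooth function of the Hamiltonian and to exploit the stationarity of $\mu_\alpha$. I would first record the It\^o expansion of $E(y_t)$ itself along the damped-stochastic flow (\ref{ddKG}). Because $E$ is preserved by the pure Klein-Gordon dynamics, only the damping and the forcing contribute; a direct computation, specializing the Hessian argument already carried out in Proposition~\ref{ito_sur_functionals} for $G_1 = E + I_\alpha$, yields
\begin{equation*}
dE(y_t) = \alpha\!\left(\tfrac{1}{2}A_0 - \|\dt u_t\|_1^2\right)\!dt + \sqrt{\alpha}\sum_{m\geq 0} a_m(\dt u_t, e_m)\,d\beta_m(t),
\end{equation*}
so that $d\langle E\rangle_t = \alpha\sum_m a_m^2(\dt u_t, e_m)^2\,dt$. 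The requirements to take a deterministic time in this expansion are those of Proposition~\ref{chap5:cond_kS}, and they are satisfied because the moment estimate (\ref{estim_unif_Gp}) with $p > 1$ gives $\E_{\mu_\alpha} E^{1^+}(y) < \infty$.

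I would then introduce an antiderivative of $H$, namely $F(x) := \int_0^x H(r)\,dr$, so that $F' = H$ and $F'' = h$. Since $h \in C_0^\infty(\R)$, the function $F$ is smooth with at most linear growth. Composing with the real semimartingale $t \mapsto E(y_t)$ and applying the classical one-dimensional It\^o formula, I would obtain
\begin{equation*}
dF(E(y_t)) = H(E(y_t))\,dE(y_t) + \tfrac{1}{2}h(E(y_t))\,d\langle E\rangle_t.
\end{equation*}
Substituting the decomposition of $dE$, integrating from $0$ to $t$, and taking expectation under $y_0 \sim \mu_\alpha$, the left-hand side vanishes by invariance, while the drift integrand has constant expectation in $s$ (stationarity again). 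This delivers
\begin{equation*}
0 = t\cdot\alpha\,\E_{\mu_\alpha}\!\left[H(E)\!\left(\tfrac{1}{2}A_0 - \|\dt u\|_1^2\right) + \tfrac{1}{2}h(E)\sum_m a_m^2(\dt u, e_m)^2\right],
\end{equation*}
provided the stochastic integral has zero expectation, and dividing by $\alpha t$ gives the balance identity of the proposition (in the form dictated by the computation).

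The main obstacle is the technical integrability verification. The martingale integrand carries a factor $H(E)$ of linear growth in $E$ and a factor $\sum_m a_m^2(\dt u, e_m)^2 \leq A_0\|\dt u\|^2$; both are controlled in $L^1(\mu_\alpha)$ by (\ref{estim_unif_L1}) and (\ref{estim_unif_Gp}), and the Burkholder-Davis-Gundy inequality combined with these moment bounds upgrades the local martingale to a true martingale with vanishing expectation. Convergence of the two expectations in the stated identity is automatic: the compact support of $h$ handles the second term, and the linear growth of $H$ combined with $L_1 \in L^1(\mu_\alpha)$ handles the first. If it becomes convenient to avoid invoking the infinite-dimensional It\^o formula directly on the nonlinear functional $F\circ E$, one can first run the argument with $F$ replaced by the truncation $F_N(x) := F(x)\chi_N(x)$ (where $\chi_N$ is the bump function already used in Section~\ref{sect_prv}), derive the identity for $F_N$, and pass to $N \to \infty$ by dominated convergence, the dominants being supplied by the same moment bounds.
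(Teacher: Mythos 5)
Your argument is correct and arrives at the balance relation by a genuinely more direct route than the paper. The paper never composes $E$ with the antiderivative $F$ directly: it solves the resolvent ODE $-\Phi''_\lambda+\lambda\Phi_\lambda=h$, applies the It\^o formula to $\Phi_\lambda\circ E$ to get the identity $(\ref{chap5_Balance_lambda})$ for each fixed $\lambda\in(0,1)$, and only then lets $\lambda\to 0$, using $\Phi'_\lambda\to -H$, $\Phi''_\lambda\to -h$ and dominated convergence. The purpose of that detour is to work, for fixed $\lambda$, with a test function whose second derivative is tied to the function itself by the ODE; your truncation $F_N=F\chi_N$ achieves the same technical goal (a $C^2$ function with controlled growth to which the It\^o formula of \cite{KS12} applies) more economically, and the limit $N\to\infty$ is easier to dominate than the limit $\lambda\to 0$, since $F$, $F'=H$ and $F''=h$ are respectively of linear growth, bounded and compactly supported, hence integrable against $\mu_\alpha$ by $(\ref{estim_unif_L1})$ and $(\ref{estim_unif_Gp})$; your treatment of the martingale term via the first moment of $\|\dt u\|^2$ under $\mu_\alpha$ is also sound. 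One discrepancy to flag: your computation yields the weight $\tfrac{A_0}{2}-\|\dt u\|_1^2$ in front of $H(E)$, while the proposition is stated with $A_0-\|\dt u\|_1^2$. Your constant is the one forced by the It\^o expansion $dE=\alpha\bigl(\tfrac{A_0}{2}-\|\dt u\|_1^2\bigr)dt+\sqrt{\alpha}\sum_m a_m(\dt u,e_m)d\beta_m$, and it is the one consistent with the paper's own local-time remark (which produces the weight $2\|\dt u\|_1^2-A_0$); the statement of the proposition appears to carry a factor-of-two slip. Since the identity is only used downstream to obtain $\E_{\mu_\alpha}\bigl[1_\Gamma(E)\sum_m a_m^2(\dt u,e_m)^2\bigr]\lleq l(\Gamma)$, where the constant is immaterial, this does not affect anything, but you should record the identity with $\tfrac{A_0}{2}$.
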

\begin{proof}
Consider the second order linear ODE
\begin{equation}\label{chap5_ODE}
-\Phi''_\lambda+\lambda\Phi_\lambda=h \ \ \ \ \ \lambda \in (0,1),
\end{equation}
with initial data $\Phi(0)=\Phi'(0)=0$. Then it is a matter of direct verification that its solution is
\begin{equation*}
\Phi_\lambda(x)=\frac{1}{2\sqrt{\lambda}}\int_0^x\left(e^{-(x-y)\sqrt{\lambda}}-e^{(x-y)\sqrt{\lambda}}\right)h(y)dy.
\end{equation*}
The good behaviour of $\Phi_\lambda(x)$ at $x\to\infty$ allows to 	apply the Itô formula (Theorem $A.7.5$ and Corollary $A.7.6$ of \cite{KS12}) to $\Phi_\lambda\circ E(y)$: 
\begin{align*}
\Phi_\lambda(E(u))&=\Phi_\lambda(E(u_0))\nonumber\\
&+\int_0^t\left(\Phi_\lambda'(E(u))\left\{(\nabla_yE,g)+\frac{\alpha}{2}\sum_{m}a_m^2(\nabla_y^2E,e_m)\right\}+\Phi_\lambda''(E(u))\sum_{m}a_m^2(\nabla_yE,e_m)^2\right)ds\nonumber\\ 
&+\sum_{m}a_m\int_0^t\Phi_\lambda'(E(u))(\nabla_yE,e_m)d\beta_m(s),
\end{align*}
where
\begin{align*}
g=[g_1,g_2]=[\dt u,\Delta_0 u-u^3+\alpha\Delta_0\dt u].
\end{align*}
Taking the expectation with respect to $\mu_\alpha$ and using the stationarity of the latter, we are led to

\begin{equation}\label{chap5_Balance_lambda}
\E_{\mu_\alpha}\left[\Phi_\lambda'(E(s))\left(A_0-\|\dt u\|_1^2\right)\right]+\frac{1}{2}\E_{\mu_\alpha}\left[\Phi_\lambda''(E(s))\sum_{m}a_m^2(\dt u,e_m)^2\right]=0.
\end{equation}

Now, we have that

\begin{equation*}
\Phi'_\lambda(x)=\frac{-1}{2}\int_0^x\left(e^{-(x-y)\sqrt{\lambda}}+e^{(x-y)\sqrt{\lambda}}\right)h(y)dy,
\end{equation*}
Then we see, using the equation $(\ref{chap5_ODE})$ and the Lebesgue dominated convergence theorem, that, as $\lambda\to 0,$
\begin{align*}
\Phi'_\lambda(x) &\to -\int_0^xh(y)dy=-H(x),\\
\Phi''_\lambda(x) &\to -h(x).
\end{align*}
It remains to apply again the Lebesgue dominated convergence theorem in $(\ref{chap5_Balance_lambda})$ to arrive at the claim.
\end{proof}
By a standard approximation argument one can pass from $C_0^\infty$-functions to indicators on intervals functions, then using the monoton class theorem we arrive at:
\begin{cor}
For any Borel set $\Gamma\subset\R$, we have
\begin{equation}\label{chap5_inequal_effective_Abs_Cont}
\E_{\mu_\alpha}\left[1_{\Gamma}(E)\sum_{m}a_m^2(\dt u, e_m)^2\right]\leq Cl(\Gamma)
\end{equation}
\end{cor}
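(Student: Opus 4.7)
The plan is to approximate $1_\Gamma$ by smooth compactly supported test functions, insert them into the balance identity of the preceding proposition, and extend the resulting interval bound to all Borel sets by standard measure-theoretic approximation.

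First I would treat the case of a bounded open interval $\Gamma=(a,b)$. Choose $h_n\in C_0^\infty(\R)$ with $0\le h_n\le 1$, $\operatorname{supp} h_n\subset(a-\tfrac1n,b+\tfrac1n)$ and $h_n\to 1_{[a,b]}$ pointwise; the primitives $H_n(x)=\int_0^x h_n(r)\,dr$ then satisfy the uniform bound $|H_n(x)|\le l(\Gamma)+\tfrac{2}{n}$. Substituting $h=h_n$ into the balance relation of the preceding proposition and rearranging yields
\begin{equation*}
\tfrac{1}{2}\,\E_{\mu_\alpha}\!\Big[h_n(E)\sum_{m}a_m^2(\dt u,e_m)^2\Big]
=-\,\E_{\mu_\alpha}\!\big[H_n(E)(A_0-\|\dt u\|_1^2)\big],
\end{equation*}
whose right-hand side is controlled in absolute value by $(l(\Gamma)+\tfrac{2}{n})\bigl(A_0+\E_{\mu_\alpha}\|\dt u\|_1^2\bigr)$. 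Combining $(\ref{estim_unif_L1})$ with the lower bound $(\ref{chap4_L1_controle})$ gives $\E_{\mu_\alpha}\|\dt u\|_1^2\le A_0/\kappa$, uniformly in $\alpha\in(0,1)$, so this bound is of the desired form $C\,l(\Gamma)+o(1)$ with $C$ independent of $\alpha$.

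For the limit on the left-hand side I would invoke dominated convergence with the integrable majorant $\sum_m a_m^2(\dt u,e_m)^2\le A_0\|\dt u\|^2\le 2A_0\,E(y)$, which is $\mu_\alpha$-integrable uniformly in $\alpha$ by $(\ref{estim_unif_Gp})$ with $p=1$. This yields the bound first for the closed interval $[a,b]$ and hence, by monotonicity of the integrand, also for $\Gamma=(a,b)$. The extension to a general Borel set is then routine: the map $\Gamma\mapsto\E_{\mu_\alpha}[1_\Gamma(E)\sum_{m}a_m^2(\dt u,e_m)^2]$ defines a finite positive Borel measure on $\R$; the interval case together with countable additivity gives the bound on every open set (written as a countable disjoint union of open intervals), and outer regularity of both measures extends it to arbitrary Borel sets. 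The only delicate point is the $\alpha$-uniformity of the constant $C$, which is inherited from the uniform-in-$\alpha$ moment bounds cited above; I do not anticipate any further obstacle.
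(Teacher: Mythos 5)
Your proposal is correct and takes essentially the same route as the paper, whose proof of this corollary is just the one-line remark that one passes from $C_0^\infty$ test functions in the balance relation to indicators of intervals by a standard approximation argument and then to general Borel sets by the monotone class theorem. You have merely filled in the details soundly: the uniform bound $\E_{\mu_\alpha}\|\dt u\|_1^2\le A_0/\kappa$ coming from $(\ref{estim_unif_L1})$ and $(\ref{chap4_L1_controle})$, dominated convergence with the majorant $A_0\|\dt u\|^2$, and the extension to Borel sets via regularity (a harmless variant of the monotone class step).
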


\begin{proof}[Proof of Theorem $\ref{densit}$]
Thanks to the Portmanteau theorem the proof is restricted to the invariant measures $\mu_\alpha$ associated to the stochastic problem as long as the resulting estimates are uniform in $\alpha$. It then consists of the following two steps:
\\
\textbf{Absolute continuity on the interval $]0,+\infty[.$} By the regularity property, it suffices to consider the intervals $[\epsilon,+\infty[,$ where $\epsilon>0$ is arbitrarily small. Let's define the sets $$I_\epsilon=\{[u,\dt u]\in \mathcal{H}^{2,1}, \ \|\dt u\|\in [\epsilon,+\infty[\}.$$ 
Now write
\begin{align*}
\sum_{m\geq 0}a_m^2(\dt u,e_m)^2 &\geq\sum_{m\leq N}a_m^2(\dt u,e_m)^2\\
&\geq \underline{a}_N^2\sum_{m\leq N}(\dt u,e_m)^2\\
&=\underline{a}_N^2\left(\sum_{m\geq 0}(\dt u,e_m)^2-\sum_{m>N}(\dt u,e_m)^2\right)\\
&\geq \underline{a}_N^2\left(\|\dt u\|^2-(m_0^2+\lambda_N)^{-1}\|\dt u\|_1^2\right),
\end{align*}
where $\underline{a}_N:=\min\{a_m, \ 0\leq m\leq N\}.$
Consider the set
$$I_{\epsilon,R}=\{\|\dt u\|\geq\epsilon,\ \ \|\dt u\|_1\leq R\}\subset I_\epsilon.$$
We have on $I_{\epsilon,R}$
\begin{equation}\label{IepsL}
\sum_{m\geq 0}a_m^2(\dt u,e_m)^2\geq \underline{a}_N^2(\epsilon^2-(m_0^2+\lambda_N)^{-1}R^2):=\frac{1}{C_{N,R,\epsilon}}.
\end{equation}
Remark that, since $\lambda_N\to\infty$, for any $\epsilon>0$, any $R>0$, we can choose $N$ so that $C_{N,R,\epsilon}$ be positive. Then combining $(\ref{chap5_inequal_effective_Abs_Cont})$ and $(\ref{IepsL})$, we find
\begin{equation*}
\mu_\alpha(E^{-1}(\Gamma)\cap I_{\epsilon,R})\lleq C_{N,R,\epsilon}l(\Gamma),
\end{equation*}
on the other hand, we have by Chebyshev inequality
\begin{equation*}
\mu_\alpha(E^{-1}(\Gamma)\cap(I_\epsilon\backslash I_{\epsilon,R}))\lleq R^{-2},
\end{equation*}
then, for any $\epsilon>0,$ we have, with use of $(\ref{chap5_inequal_effective_Abs_Cont}),$
\begin{equation*}
\P(E(y)\in\Gamma\cap [\epsilon,\infty))\lleq R^{-2}+C_{N,R,\epsilon}l(\Gamma), \ \ \forall R>0.
\end{equation*}
\textbf{Now we prove that $\P(u\equiv 0)=0.$}
It suffices to show that for some $m$, for any $\alpha>0,$ $\P(u_m=0)=0,$ where $u_m$ is the projection of $u$ on the direction $e_m.$ So consider the projected equation:
\begin{equation*}
y_m(t)=y_m(0)+\int_0^tg_m(s)ds+\hat{\zeta}_m(t),
\end{equation*}
where
\begin{align*}
y_m &=[u_m,\dt u_m],\\
g_m &=[\dt{u}_m,\ (\Delta_0 u-u^3+\alpha\Delta_0\dt u,e_m)],\\
\hat{\zeta}_m(t) &=a_m\beta_m(t).
\end{align*}
An estimate of the form $(\ref{chap5_inequal_effective_Abs_Cont})$ can be derived in a same manner, we use it in the mind of the above procedure. It is clear that the quadratic variation of $u_m$ is bounded from below, it remains to control the drift term. Namely, it suffices to have that $\E(\|[u,\dt u]\|_{2,1}^2+(u^3,e_m))<\infty$ for all $\alpha>0$ to finish the proof, but this is ensured by $(\ref{estim_unif_Gp})$ and $(\ref{estim_unif_H2H1}).$ 
The proof is finished.
\end{proof}

\begin{rmq}
One could derive an inequality of type $(\ref{chap5_inequal_effective_Abs_Cont})$ by using the local time approach. We, first, apply the Itô formula to $E(y)$:
\begin{equation*}
E(y(t))=E(y(0))+\alpha\int_0^t\left(\frac{A_0}{2}-\|\dt u\|_1^2\right)ds+\sqrt{\alpha}\sum_{m\geq 0}a_m\int_0^t(\dt u,e_m)d\beta_m(s).
\end{equation*}
Using the stationarity of $y$, the local time $\Lambda_t(a,\omega)$ of $E(y)$ satisfies
\begin{equation}\label{explicit_LT}
\E \Lambda_t(a)=-\alpha t\E\left[\left(\frac{A_0}{2}-\|\dt u(0)\|_1^2\right)\Bbb 1_{(a,+\infty)}(E(y))\right].
\end{equation}
Now let $\Gamma$ be a Borel set of $\R$, the local time identity $(\ref{intro_lt_identity})$ evaluated to the process $E(y)$ at the function $\Bbb 1_{\Gamma}$ yields
\begin{equation*}
\int_{\Gamma}\Lambda_t(a)da=\alpha\sum_{m\geq 0}a_m^2\int_0^t\Bbb 1_{\Gamma}(E(y))(\dt u,e_m)^2ds.
\end{equation*}
Again, the stationarity of $u$ implies
\begin{equation}\label{identity_LT}
\int_{\Gamma}\E\Lambda_t(a)da=\alpha t\sum_{m\geq 0}a_m^2\E[\Bbb 1_{\Gamma}(E(y))(\dt u,e_m)^2].
\end{equation}
Combining $(\ref{explicit_LT})$ and $(\ref{identity_LT}),$ we get
\begin{equation*}
\E\left[\Bbb 1_{\Gamma}(E(y))\sum_{m\geq 0}a_m^2(\dt u,e_m)^2\right]=\int_\Gamma\E\left[\left(2\|\dt u(0)\|_1^2-A_0\right)\Bbb 1_{(a,+\infty)}(E(y))\right]da,
\end{equation*}
then, with use of $(\ref{estim_unif_L1})$, we find
\begin{equation*}
\E\left[\Bbb 1_{\Gamma}(E(y))\sum_{m\geq 0}a_m^2(\dt u,e_m)^2\right]\leq Cl(\Gamma),
\end{equation*}
where $C$ is a universal constant. We recognize the needed inequality.
\end{rmq}

\begin{prop}
Let $a>1$, set $\sigma=\gamma_0(2aeA_0)^{-1}$, then
\begin{equation}
\E_{\mu}e^{\sigma E(y)}=\int_{\mathcal{H}^{1,0}}e^{\sigma E(y)}\mu(dy)<+\infty.\label{1+delta}
\end{equation}
Consequently, for any $R>0$ we have
\begin{align*}
\P(E(y)\geq R) &\lleq e^{-\sigma R}.\\
\end{align*}
\end{prop}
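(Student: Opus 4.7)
The plan is to derive the exponential integrability by a straightforward Taylor expansion argument exploiting the polynomial moment bounds from Theorem \ref{Thm_sans_alpha}. Specifically, I would write
\begin{equation*}
\E_\mu e^{\sigma E(y)} = \sum_{p=0}^\infty \frac{\sigma^p}{p!}\,\E_\mu E^p(y),
\end{equation*}
and insert the estimate $(\ref{inviscid_Ep})$, namely $\E_\mu E^p(y) \leq 2(2pA_0/\gamma_0)^p$. Combining this with the elementary Stirling-type lower bound $p! \geq (p/e)^p$ for $p\geq 1$, each term in the series (for $p\geq 1$) would be dominated by
\begin{equation*}
\frac{\sigma^p}{p!}\cdot 2\left(\frac{2pA_0}{\gamma_0}\right)^p \leq 2\left(\frac{2\sigma e A_0}{\gamma_0}\right)^p.
\end{equation*}
With the prescribed choice $\sigma = \gamma_0/(2aeA_0)$, the ratio $2\sigma eA_0/\gamma_0$ equals exactly $a^{-1} < 1$, so the tail of the series is geometric and summable. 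The final bound takes the explicit form
\begin{equation*}
\E_\mu e^{\sigma E(y)} \leq 1 + 2\sum_{p=1}^\infty a^{-p} = 1 + \frac{2}{a-1} < \infty,
\end{equation*}
which is $(\ref{1+delta})$.

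For the consequent tail estimate, the natural step is a Chebyshev/Markov inequality applied to the monotone function $x\mapsto e^{\sigma x}$:
\begin{equation*}
\mu\bigl(\{y:E(y)\geq R\}\bigr) = \mu\bigl(\{y: e^{\sigma E(y)}\geq e^{\sigma R}\}\bigr) \leq e^{-\sigma R}\,\E_\mu e^{\sigma E(y)} \lleq e^{-\sigma R},
\end{equation*}
where the implicit constant absorbs $1 + 2/(a-1)$.

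I do not foresee any genuine obstacle in executing this plan; the calibration of $\sigma$ in terms of $a$, $\gamma_0$ and $A_0$ has been chosen precisely so that the geometric series converges. All the analytic work — obtaining uniform-in-$\alpha$ polynomial moments for $G_1$ through the It\^o formula and passing to the inviscid limit via the tightness argument in Theorem \ref{Thm_sans_alpha} — has already been carried out, so this last proposition is essentially a bookkeeping consequence of the sharp $p$-dependence $\E_\mu E^p(y)\lleq p^p$ that the previous estimates deliver.
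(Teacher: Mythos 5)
Your proposal is correct and follows essentially the same route as the paper: expand the exponential, insert the moment bound $(\ref{inviscid_Ep})$, and control $p!$ from below (you use the elementary bound $p!\geq (p/e)^p$ where the paper invokes Stirling, but the calibration of $\sigma$ and the resulting geometric series are identical), then conclude the tail bound by Chebyshev. No gaps.
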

\begin{proof}
From $(\ref{inviscid_Ep}),$ we write
\begin{align*}
\E_\mu\frac{E^p}{p!}\leq 2\frac{(2pA_0)^p}{\gamma_0^pp!},
\end{align*}
then, with use of the Stirling formula, we get
\begin{align*}
\E_\mu\frac{(\sigma E)^p}{p!}\leq \frac{2p^p}{a^pe^pp!}\sim_{p\to\infty}\frac{\sqrt{2}}{a^p\sqrt{p\pi}}.
\end{align*}
Since $a>1$, the serie of general term $\E_\mu\frac{(\sigma E)^p}{p!}$ is convergent and we get $(\ref{1+delta}).$ Now, we use the Chebyshev inequality to derive the other claim.
\end{proof}

\paragraph{Acknoweledgements.} I thank my advisors Armen Shirikyan and Nikolay Tzvetkov for useful discussions and valuable remarks. I also thank Laurent Thomann for many discussions. This research was supported by the program DIM RDMath of FSMP and Région Ile-de-France.





\bibliographystyle{alpha}
\bibliography{kgg}

\end{document}